\documentclass[11pt]{article}
\usepackage[utf8]{inputenc}

\usepackage[title]{appendix}
\usepackage{epsfig,epsf,fancybox}
\usepackage{amsmath}
\usepackage{mathrsfs}
\usepackage{amssymb}
\usepackage{graphicx}
\usepackage{color}
\usepackage{multirow}
\usepackage{paralist}
\usepackage{verbatim}
\usepackage{galois}
\usepackage{algorithm}
\usepackage{algorithmic}
\usepackage{boxedminipage}
\usepackage{booktabs}
\usepackage{accents}
\usepackage{stmaryrd}
\usepackage{subfig}
\usepackage{epstopdf}
\usepackage{amsthm}
\usepackage{cases}  
\usepackage[top=1in,bottom=1.2in,left=1in,right=1in,xetex]{geometry}
\usepackage[pdfborder={0 0 0},colorlinks=true,linkcolor=blue,CJKbookmarks=true]{hyperref}
\usepackage{enumerate}
\usepackage{amsrefs}

\newcommand\blfootnote[1]{%
	\begingroup
	\renewcommand\thefootnote{}\footnote{#1}%
	\addtocounter{footnote}{-1}%
	\endgroup
}

\allowdisplaybreaks[4]

\newtheorem{theorem}{Theorem}[section]
\newtheorem{lemma}[theorem]{Lemma}
\newtheorem{assumption}[theorem]{Assumption}
\newtheorem{proposition}[theorem]{Proposition}
\newtheorem{definition}[theorem]{Definition}
\newtheorem{remark}[theorem]{Remark}

\title{Mild Solution of Semilinear SPDEs with Young Drifts\footnotemark[1]}
\author{Jiahao Liang\footnotemark[2] \ and Shanjian Tang\footnotemark[3]} 

\begin{document}

\maketitle

\pagenumbering{arabic}

\begin{abstract}
In this paper, we study a semilinear SPDE with a linear Young drift 
$du_{t}=Lu_{t}dt+f\left(t, u_{t}\right)dt+\left(G_{t}u_{t}+g_{t}\right)d\eta_{t}+h\left(t, u_{t}\right)dW_{t}$, 
where $L$ is the generator of an analytical semigroup, 
$\eta$ is an $\alpha$-Hölder continuous path with $\alpha \in \left(1/2, 1\right)$ 
and $W$ is a Brownian motion. 
After establishing through two different approaches the Young convolution integrals for stochastic integrands, 
we introduce the corresponding definition of mild solutions and continuous mild solutions, 
and give via a fixed-point argument the existence and uniqueness of the (continuous) mild solution  
under suitable conditions.  
\end{abstract}

\renewcommand{\thefootnote}{\fnsymbol{footnote}}

\blfootnote{\textit{Key words and phrases.} stochastic partial differential equation (SPDE), Young integral, mild solution. }
\blfootnote{\textit{MSC2020 subject classifications}: 60H15, 60L50. }
\footnotetext[1]{This work was partially supported by National Natural Science Foundation of China (Grants No. 12031009),  Key Laboratory of Mathematics for Nonlinear Sciences (Ministry of Education), and Shanghai Key Laboratory for Contemporary Applied Mathematics, Fudan University, Shanghai 200433, China.}
\footnotetext[2]{School of Mathematical Sciences, Fudan University, Shanghai 200433, China. E-mail: jhliang20@fudan.edu.cn.}
\footnotetext[3]{Institute of Mathematical Finance and Department of Finance and Control Sciences, School of Mathematical Sciences, Fudan University, Shanghai 200433, P. R. China. Email: sjtang@fudan.edu.cn.}

\section{Introduction}
\label{Sec1}
The Young integral introduced by Young \cite{Y36} extends the Riemann-Stieltjes integral 
$\int Yd\eta$ when $\eta$ and $Y$ are continuous and have finite 
$\frac{1}{\alpha}$-variation and $\frac{1}{\beta}$-variation respectively 
(equivalently through a time transformation, 
$\eta$ and $Y$ are $\alpha$-Hölder and $\beta$-Hölder continuous respectively) 
with $\alpha+\beta > 1$. 
On this basis, Lyons \cites{L94, L98} and Gubinelli \cite{G04} develop a more general theory 
of rough integrals and rough differential equations (RDEs).\\
\indent
PDEs driven by irregular paths have been well-studied. 
One of the important approaches is to study mild solutions of these PDEs. 
Mild solutions of the semilinear Young PDE 
\begin{equation} \label{ypde}
	\left\{
		\begin{aligned}
			&du_{t}=\left[Lu_{t}+f\left(u_{t}\right)\right]dt+g\left(u_{t}\right)d\eta_{t}, \quad t \in \left(0, T\right],\\
			&u_{0}=\xi
		\end{aligned}
	\right.	
\end{equation}
were first studied by Gubinelli et al. \cite{GLT06}, 
where $L$ is the generator of an analytical semigroup $\left(S_{t}\right)_{t \geq 0}$ 
on a Hilbert space $\mathcal{H}$ and 
$\eta \in C^{\alpha}\left(\left[0, T\right], \mathbb{R}^{e}\right)$ for some $\alpha \in \left(\frac{1}{2}, 1\right)$. 
Gubinelli and Tindel \cite{GT10} obtain the existence and uniqueness of the mild solution 
of the semilinear rough PDE 
\begin{equation} \label{rpde}
	\left\{
		\begin{aligned}
			&du_{t}=\left[Lu_{t}+f\left(u_{t}\right)\right]dt+g\left(u_{t}\right)d\mathbf{X}_{t}, \quad t \in \left(0, T\right],\\
			&u_{0}=\xi,
		\end{aligned}
	\right.	
\end{equation}
where $g$ is linear or polynomial and $\mathbf{X}$ is a $\sigma$-Hölder rough path 
with $\sigma \in \left(\frac{1}{3}, \frac{1}{2}\right]$. 
Then Deya et al. \cite{DGT12} study the rough PDE \eqref{rpde} for general $g$. 
They obtain the local existence and uniqueness of the mild solution and 
construct a global mild solution under stronger regularity assumptions. 
After that, the rough PDE \eqref{rpde} has also been studied by 
Gerasimovičs and Hairer \cite{GH19} and Hesse and Neamţu \cites{HN19, HN20, HeN22}. 
It has also been extended to non-autonomous semilinear rough PDEs by Gerasimovičs et al. \cite{GHN21} 
and quasilinear rough PDEs by Hocquet and Neamţu \cite{HoN22}. 
On the other hand, Addona et al. \cites{ALT22-1, ALT22-2} study the smoothness of the mild solution 
of the Young PDE \eqref{ypde} and reduce the regularity requirement on the initial datum $\xi$.\\
\indent
In this paper, we consider the semilinear SPDE with a linear Young drift (Young SPDE)
\begin{equation} \label{yspdei}
	\left\{
		\begin{aligned}
			&du_{t}=\left[Lu_{t}+f\left(t, u_{t}\right)\right]dt+\left(G_{t}u_{t}+g_{t}\right)d\eta_{t}+h\left(t, u_{t}\right)dW_{t}, \quad t \in \left(0, T\right],\\
			&u_{0}=\xi,
		\end{aligned}
	\right.	
\end{equation}
where $L$ and $\eta$ are the same as in \eqref{ypde}, $W$ is a standard Brownian motion and 
coefficients $f, G, g$ and $h$ are random and time-varying. 
It connects to the SPDE driven by $W$ and an independent fractional Brownian motion 
$B^{H}$ with Hurst parameter $H \in \left(\frac{1}{2}, 1\right)$ 
\begin{equation*}
	\left\{
		\begin{aligned}
			&du_{t}=\left[Lu_{t}+f\left(t, u_{t}\right)\right]dt+\left(G_{t}u_{t}+g_{t}\right)dB_{t}^{H}+h\left(t, u_{t}\right)dW_{t}, \quad t \in \left(0, T\right],\\
			&u_{0}=\xi.
		\end{aligned}
	\right.	
\end{equation*}
Naturally, the Young SPDE \eqref{yspdei} can be formulated in a mild form
\begin{equation} \label{mf}
	u_{t}=S_{t}\xi+\int_{0}^{t}S_{t-r}f\left(r, u_{r}\right)dr+\int_{0}^{t}S_{t-r}\left(G_{r}u_{r}+g_{r}\right)d\eta_{r}+\int_{0}^{t}S_{t-r}h\left(r, u_{r}\right)dW_{r}, \quad t \in \left[0, T\right].
\end{equation}
To solve this equation, we need to establish the Young convolution integral 
\begin{equation} \label{ycidi}
	\int_{0}^{\cdot}S_{\cdot-r}Y_{r}d\eta_{r}
\end{equation}
for a stochastic process $Y: \left[0, T\right] \times \Omega \rightarrow \mathcal{H}_{\gamma}^{e}$
under certain conditions, where $\left(\mathcal{H}_{\gamma}\right)_{\gamma \in \mathbb{R}}$ is interpolation spaces 
corresponding to $L$. 
To this end, we give two different approaches. 
One approach is to regard $Y$ as a path from $\left[0, T\right]$ to 
$L^{m}\left(\Omega, \mathcal{H}_{\gamma}^{e}\right)$ for some $m \in \left[2, \infty\right)$. 
Such an approach is similar to defining the rough stochastic integration as in \cites{FHL21, LT23}. 
The other approach is to define \eqref{ycidi} pathwisely for a.s $\omega \in \Omega$, 
where $Y$ is required to have a beter time regularity. 
By a (continuous) mild solution of \eqref{yspdei}, 
we mean a process $u$ satisfying the equation \eqref{mf} where the Young convolution integral 
\begin{equation*}
	\int_{0}^{\cdot}S_{\cdot-r}\left(G_{r}u_{r}+g_{r}\right)d\eta_{r}
\end{equation*}
is defined through the first (second) approach. 
Then by a fixed-point argument together with some estimates, 
we get the existence and uniqueness of the (continuous) mild 
solution under suitable conditions.
The continuity of the solution map and spatial regularity of the mild solution 
are also obtained. 
To our best knowledge, this is the first study to SPDEs with Young drifts.\\
\indent
The paper is organized as follows. 
Section \ref{Sec2} contains preliminary notations and results. 
In Section \ref{Sec3}, we establish through two different approaches the Young convolution integrals 
for stochastic integrands. 
Mild solutions and continuous mild solutions are studied in Section \ref{Sec4} and \ref{Sec5}, respectively. 
In Section \ref{Sec6}, we provide a concrete example to illustrate our results. 

\section{Preliminaries}
\label{Sec2}
Throughout the paper, we fix $\alpha \in \left(\frac{1}{2}, 1\right)$ and let 
$\beta \in \left(0, 1\right)$ and $m \in \left[2, \infty\right]$. 
Write $a \lesssim b$ provided there exists a generic positive constant $C$ such that $a \leq Cb$. 
Fixing a finite time horizon $T > 0$, 
let $\Delta_{2}:=\left\{\left(s, t\right): 0 \leq s \leq t \leq T\right\}$ 
and $\Delta_{3}:=\left\{\left(s, r, t\right): 0 \leq s \leq r \leq t \leq T\right\}$. 
For $\left(s, t\right) \in \Delta_{2}$, denote by $\mathcal{P}\left[s, t\right]$ 
the set of all partitions of the interval $\left[s, t\right]$ and $\left|\pi\right|$ 
the mesh size of a partition $\pi \in \mathcal{P}\left[s, t\right]$. 
For Banach spaces $V$ and $\bar{V}$, define $\mathcal{L}\left(V, \bar{V}\right)$ as 
the space of bounded linear operators from $V$ to $\bar{V}$, endowed with the operator norm. 
Define $C\left(V, \bar{V}\right)$ as the space of bounded continuous maps 
from $V$ to $\bar{V}$, endowed with the maximum-norm.\\
\indent
Let $\left(\Omega, \mathcal{F}, \mathcal{F}_{t}, \mathbb{P}\right)$ 
be a filtered probability space satisfying the usual conditions and 
carrying a $d$-dimensional Brownian motion $W$. 
For a Banach space $\left(V, \left|\cdot\right|\right)$ and 
subfield $\mathcal{G}$ of $\mathcal{F}$, 
define $L^{m}\left(\Omega, \mathcal{G}, V\right)$ as 
the space of $\mathcal{G}$-measurable $L^{m}$-integrable 
$V$-valued random variables $\xi$, endowed with the norm 
$\left\|\xi\right\|_{m, V}:=\left\|\left|\xi\right|_{V}\right\|_{m}$. 
For simplicity write $L^{m}\left(\Omega, V\right):=L^{m}\left(\Omega, \mathcal{F}, V\right)$. 
\indent
\subsection{Analytic semigroups and interpolation spaces}
As in \cite{GH19}, let $\left(\mathcal{H}, \left|\cdot\right|\right)$ be a Hilbert space and 
$L: D\left(L\right) \subset \mathcal{H} \rightarrow \mathcal{H}$ 
be a linear operator generating an analytic semigroup $\left(S_{t}\right)_{t \geq 0}$. 
Assume without loss of generality that there exists a positive constant $\nu$ such that 
\begin{equation*}
	\left|S_{t}u\right| \lesssim e^{-\nu t}\left|u\right|, \quad \forall t \in \left[0, T\right], \quad \forall u \in \mathcal{H}.
\end{equation*}
By standard analytic semigroup theory (see \cites{P83, H09}), for $\gamma > 0$, 
we can define the bounded injective operator 
\begin{equation*}
	\left(-L\right)^{-\gamma}:=\frac{1}{\Gamma\left(\gamma\right)}\int_{0}^{\infty}r^{\gamma-1}S_{r}dr,
\end{equation*}
and $\left(-L\right)^{\gamma}$ as the inverse of $\left(-L\right)^{-\gamma}$. 
Then define $\mathcal{H}_{\gamma}:=D\left(\left(-L\right)^{\gamma}\right)$ endowed with the norm 
$\left|u\right|_{\gamma}:=\left|\left(-L\right)^{\gamma}u\right|$ 
and $\mathcal{H}_{-\gamma}$ as the completion of $\mathcal{H}$ for the norm 
$\left|u\right|_{-\gamma}:=\left|\left(-L\right)^{-\gamma}u\right|$. 
Write $\left(\mathcal{H}_{0}, \left|\cdot\right|_{0}\right):=\left(\mathcal{H}, \left|\cdot\right|\right)$. 
Then for every $\gamma \in \mathbb{R}$, $\mathcal{H}_{\gamma}$ is a Hilbert space and 
$\left(S_{t}\right)_{t \geq 0}$ is also an analytic semigroup on $\mathcal{H}_{\gamma}$. 
For $\gamma_{1} \leq \gamma_{2}$,  
$\mathcal{H}_{\gamma_{2}}$ is continuously embedded into $\mathcal{H}_{\gamma_{1}}$. 
The following results can be found in \cite{H09}*{Propositions 4.40 and 4.44}. 
\begin{proposition} \label{asp}
	~
	\begin{enumerate}[(i)]
		\item For $\gamma_{1} \leq \gamma_{2}$, we have 
		\begin{equation*}
			\left|S_{t}u\right|_{\gamma_{2}} \lesssim t^{\gamma_{1}-\gamma_{2}}\left|u\right|_{\gamma_{1}}, \quad \forall t \in \left(0, T\right], \quad \forall u \in \mathcal{H}_{\gamma_{1}}.
		\end{equation*}
		\item For $\gamma_{1} \leq \gamma_{2} < \gamma_{1}+1$, we have 
		\begin{equation*}
			\left|S_{t}u-u\right|_{\gamma_{1}} \lesssim t^{\gamma_{2}-\gamma_{1}}\left|u\right|_{\gamma_{2}}, \quad \forall t \in \left[0, T\right], \quad \forall u \in \mathcal{H}_{\gamma_{2}}.
		\end{equation*}
	\end{enumerate}
\end{proposition}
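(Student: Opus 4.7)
The plan is to reduce both estimates to the single fundamental bound
\[
\bigl\|(-L)^{\delta} S_{t}\bigr\|_{\mathcal{L}(\mathcal{H})} \lesssim t^{-\delta}, \quad t \in (0, T], \; \delta \geq 0,
\]
which is the standard consequence of analyticity, available for any sectorial generator via the Dunford functional calculus (see Pazy \cite{P83} or Henry \cite{H09}). The two structural facts I will use repeatedly are that $S_{t}$ commutes with every fractional power $(-L)^{\gamma}$ on the appropriate domain, and that $(-L)^{a+b} = (-L)^{a}(-L)^{b}$ whenever both sides are applied to elements of a sufficiently regular interpolation space. Since $\mathcal{H}_{-\gamma}$ is defined as a completion, I would first establish both inequalities on the dense core $\bigcap_{k \in \mathbb{N}} \mathcal{H}_{k}$, then extend by continuity of $(-L)^{\pm \gamma}$ and $S_{t}$.

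For part (i), for $u \in \mathcal{H}_{\gamma_{1}}$ I would simply write
\[
|S_{t}u|_{\gamma_{2}} = \bigl|(-L)^{\gamma_{2}} S_{t} u\bigr| = \bigl|(-L)^{\gamma_{2}-\gamma_{1}} S_{t} \, (-L)^{\gamma_{1}} u\bigr|
\]
and apply the fundamental bound with $\delta = \gamma_{2} - \gamma_{1} \geq 0$, yielding $|S_{t}u|_{\gamma_{2}} \lesssim t^{\gamma_{1}-\gamma_{2}} \, |u|_{\gamma_{1}}$. The exponential decay factor $e^{-\nu t}$ plays no role on the bounded interval $[0, T]$ and is absorbed into the generic constant.

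For part (ii), I would invoke the analytic-semigroup identity $S_{t} v - v = -\int_{0}^{t} (-L) S_{r} v \, dr$ (valid on a dense core and extending by closure), apply $(-L)^{\gamma_{1}}$, and insert the $(-L)^{\gamma_{2}}$ of $u \in \mathcal{H}_{\gamma_{2}}$ to obtain
\[
(-L)^{\gamma_{1}}\bigl(S_{t} u - u\bigr) = -\int_{0}^{t} (-L)^{1+\gamma_{1}-\gamma_{2}} S_{r} \, (-L)^{\gamma_{2}} u \, dr.
\]
The hypothesis $\gamma_{2} < \gamma_{1} + 1$ guarantees that $1 + \gamma_{1} - \gamma_{2} \in (0, 1]$, so the bound above gives $\|(-L)^{1+\gamma_{1}-\gamma_{2}} S_{r}\| \lesssim r^{\gamma_{2}-\gamma_{1}-1}$. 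Integrating, and noting that $\gamma_{2} - \gamma_{1} - 1 > -1$ makes the integrand integrable at $r = 0$, yields $|S_{t}u - u|_{\gamma_{1}} \lesssim |u|_{\gamma_{2}} \int_{0}^{t} r^{\gamma_{2}-\gamma_{1}-1} dr \lesssim t^{\gamma_{2}-\gamma_{1}} |u|_{\gamma_{2}}$; the boundary case $\gamma_{2} = \gamma_{1}$ is trivial from uniform boundedness of $S_{t} - I$ on $\mathcal{H}_{\gamma_{1}}$.

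The main subtlety — rather than a true obstacle — is justifying all of these manipulations for negative exponents, since then $(-L)^{\gamma}$ is defined via completion and the Bochner-type integral representations must be interpreted carefully. My approach here is the standard one: verify the identities on $\bigcap_{k} \mathcal{H}_{k}$, where every operator involved is bounded and commutes on nose, and then pass to the general case by density. No new ideas are required beyond the ones already used in Henry \cite{H09}*{Propositions 4.40 and 4.44}, to which the authors indeed appeal.
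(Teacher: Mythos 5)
Your argument is correct, but note that the paper itself offers no proof of this proposition: it is quoted verbatim from Hairer \cite{H09}*{Propositions 4.40 and 4.44}, and your proof is essentially the standard one given in that reference and in Pazy \cite{P83} — part (i) by factoring $(-L)^{\gamma_{2}}S_{t} = (-L)^{\gamma_{2}-\gamma_{1}}S_{t}(-L)^{\gamma_{1}}$ and invoking the analytic smoothing bound, part (ii) by the representation $S_{t}u-u = -\int_{0}^{t}(-L)S_{r}u\,dr$ together with the same bound, with the degenerate case $\gamma_{2}=\gamma_{1}$ and the completion-space technicalities handled exactly as you describe. So there is nothing to compare against beyond the citation; your write-up fills in the proof the authors chose to omit, in the same way the cited sources do.
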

In the sequel of this paper, we let $\gamma, \gamma_{1}$ and $\gamma_{2}$ be any real numbers. 
For simplicity write $\mathcal{H}_{\gamma}^{e}:=\mathcal{L}\left(\mathbb{R}^{e}, \mathcal{H}_{\gamma}\right)$ 
and its norm is also denoted by $\left|\cdot\right|_{\gamma}$. 
For $f \in \mathcal{L}\left(\mathcal{H}_{\gamma_{1}}^{e_{1}}, \mathcal{H}_{\gamma_{2}}^{e_{2}}\right)$, 
define 
\begin{equation*}
	\left|f\right|_{\left(\gamma_{1}, \gamma_{2}\right)\text{-}op}:=\sup_{\left|u\right|_{\gamma_{1}} \leq 1}\left|fu\right|_{\gamma_{2}}.
\end{equation*}
Write 
$\left\|\cdot\right\|_{m, \gamma}:=\left\|\cdot\right\|_{m, \mathcal{H}_{\gamma}^{e}}$ 
and $\left\|\cdot\right\|_{m, \left(\gamma_{1}, \gamma_{2}\right)\text{-}op}:=\left\|\cdot\right\|_{m, \mathcal{L}\left(\mathcal{H}_{\gamma_{1}}^{e_{1}}, \mathcal{H}_{\gamma_{2}}^{e_{2}}\right)}$.
\subsection{Increment operators and Hölder type spaces}
For $Y: \left[0, T\right] \times \Omega \rightarrow \mathcal{H}_{\gamma}^{e}$, 
define $\delta Y, \hat{\delta}Y: \Delta_{2} \times \Omega \rightarrow \mathcal{H}_{\gamma}^{e}$ 
as the increment and mild increment of $Y$ respectively, i.e. 
\begin{equation*}
	\delta Y_{s, t}:=Y_{t}-Y_{s}, \quad \hat{\delta} Y_{s, t}:=Y_{t}-S_{t-s}Y_{s}, \quad \forall \left(s, t\right) \in \Delta_{2}.
\end{equation*}
Similarly, for $A: \Delta_{2} \times \Omega \rightarrow \mathcal{H}_{\gamma}^{e}$, 
define $\delta A, \hat{\delta}A: \Delta_{3} \times \Omega \rightarrow \mathcal{H}_{\gamma}^{e}$ by 
\begin{equation*}
	\delta A_{s, r, t}:=A_{s, t}-A_{s, r}-A_{r, t}, \quad \hat{\delta} A_{s, r, t}:=A_{s, t}-S_{t-r}A_{s, r}-A_{r, t}, \quad \forall \left(s, r, t\right) \in \Delta_{3}.
\end{equation*}
We say that $A$ is adapted 
if $A_{s, t}$ is $\mathcal{F}_{t}$-measurable for every $\left(s, t\right) \in \Delta_{2}$. 
Define 
$C_{2}^{\beta}L_{m}\mathcal{H}_{\gamma}^{e}$ as the space of measurable adapted processes 
$A: \Delta_{2} \times \Omega \rightarrow \mathcal{H}_{\gamma}^{e}$ such that 
$A \in C\left(\Delta_{2}, L^{m}\left(\Omega, \mathcal{H}_{\gamma}^{e}\right)\right)$ 
and 
\begin{equation*}
	\left\|A\right\|_{\beta, m, \gamma}:=\sup_{0 \leq s < t \leq T}\frac{\left\|A_{s, t}\right\|_{m, \gamma}}{|t-s|^{\beta}} < \infty. 
\end{equation*}
Note that $A \in C_{2}^{\beta}L_{m}\mathcal{H}_{\gamma}^{e}$ implies $A_{t, t}=0$ 
for every $t \in \left[0, T\right]$. 
For simplicity, write $CL_{m}\mathcal{H}_{\gamma}^{e}:=C\left(\left[0, T\right], L^{m}\left(\Omega, \mathcal{H}_{\gamma}^{e}\right)\right)$ 
and 
\begin{equation*}
	\left\|Y\right\|_{0, m, \gamma}:=\sup_{t \in \left[0, T\right]}\left\|Y_{t}\right\|_{m, \gamma}. 
\end{equation*}
Define $C^{\beta}L_{m}\mathcal{H}_{\gamma}^{e}$ (resp. $\hat{C}^{\beta}L_{m}\mathcal{H}_{\gamma}^{e}$) 
as the space of measurable adapted processes $Y: \left[0, T\right] \times \Omega \rightarrow \mathcal{H}_{\gamma}^{e}$ 
such that $Y \in CL_{m}\mathcal{H}_{\gamma}^{e}$ and 
$\left\|\delta Y\right\|_{\beta, m, \gamma}$ (resp. $\left\|\hat{\delta} Y\right\|_{\beta, m, \gamma}$) is finite. 
Then define $E^{\beta}L_{m}\mathcal{H}_{\gamma}^{e}:=C^{\beta}L_{m}\mathcal{H}_{\gamma-\beta}^{e} \cap CL_{m}\mathcal{H}_{\gamma}^{e}$, 
endowed with the norm 
\begin{equation*}
	\left\|Y\right\|_{E^{\beta}L_{m}\mathcal{H}_{\gamma}}:=\left\|\delta Y\right\|_{\beta, m, \gamma-\beta}+\left\|Y\right\|_{0, m, \gamma}.
\end{equation*}
To indicate the underlying time inteval $\left[0, T\right]$, 
we use notations $E^{\beta}L_{m}\mathcal{H}_{\gamma}^{e}\left[0, T\right]$ and 
$\left\|\cdot\right\|_{E^{\beta}L_{m}\mathcal{H}_{\gamma}\left[0, T\right]}$. 
\begin{proposition} \label{ne}
	$E^{\beta}L_{m}\mathcal{H}_{\gamma}^{e}=\hat{C}^{\beta}L_{m}\mathcal{H}_{\gamma-\beta}^{e} \cap CL_{m}\mathcal{H}_{\gamma}^{e}$ 
	and we have 
	\begin{equation} \label{ne1}
		\left\|Y\right\|_{E^{\beta}L_{m}\mathcal{H}_{\gamma}} \lesssim \left\|\hat{\delta} Y\right\|_{\beta, m, \gamma-\beta}+\left\|Y\right\|_{0, m, \gamma} \lesssim \left\|Y\right\|_{E^{\beta}L_{m}\mathcal{H}_{\gamma}}. 
	\end{equation}
\end{proposition}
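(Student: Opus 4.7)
The plan is to exploit the identity $\hat{\delta} Y_{s,t} = \delta Y_{s,t} - (S_{t-s} - I)Y_s$ together with Proposition \ref{asp}(ii), which quantifies how far $S_{t-s}$ deviates from the identity between adjacent interpolation scales. Since $\beta \in (0,1)$, I can apply Proposition \ref{asp}(ii) with $\gamma_1 = \gamma-\beta$ and $\gamma_2 = \gamma$ to obtain, for each $\omega$,
\begin{equation*}
    \left|(S_{t-s} - I)Y_s\right|_{\gamma-\beta} \lesssim (t-s)^{\beta} \left|Y_s\right|_{\gamma}, \quad \forall (s,t) \in \Delta_2,
\end{equation*}
and then take $L^m$-norms to conclude that $\left\|(S_{t-s} - I)Y_s\right\|_{m, \gamma-\beta} \lesssim (t-s)^{\beta} \left\|Y\right\|_{0, m, \gamma}$.

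With this estimate in hand, I would prove the two inequalities in \eqref{ne1} by the triangle inequality in $L^m(\Omega, \mathcal{H}_{\gamma-\beta}^e)$ applied to the identity above, divided by $(t-s)^\beta$ and supremized over $(s,t) \in \Delta_2$. In one direction this gives $\left\|\hat{\delta} Y\right\|_{\beta, m, \gamma-\beta} \lesssim \left\|\delta Y\right\|_{\beta, m, \gamma-\beta} + \left\|Y\right\|_{0, m, \gamma}$, and reading the identity as $\delta Y_{s,t} = \hat{\delta} Y_{s,t} + (S_{t-s}-I)Y_s$ yields the reverse bound. Combined with the trivially kept $\left\|Y\right\|_{0,m,\gamma}$ term, this produces the two inequalities in \eqref{ne1}.

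For the set equality, I would argue that both conditions "$\left\|\delta Y\right\|_{\beta,m,\gamma-\beta} < \infty$" and "$\left\|\hat\delta Y\right\|_{\beta,m,\gamma-\beta} < \infty$" are, under the hypothesis $Y \in CL_m\mathcal{H}_\gamma^e$, equivalent via the norm equivalence just proved; measurability and adaptedness of $\hat{\delta}Y$ follow from those of $\delta Y$ together with the fact that $S_{t-s}$ is a deterministic bounded operator. Since both characterizations of membership in the intersection coincide, the two spaces are equal as sets.

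The only mild subtlety, and the step I'd double-check, is ensuring that the constant hidden in Proposition \ref{asp}(ii) is uniform in $(s,t) \in \Delta_2$ (which it is, since $T$ is fixed) so that dividing by $(t-s)^\beta$ before taking the supremum does not introduce $s,t$-dependent blow-up; the rest is straightforward triangle inequality bookkeeping and I do not anticipate a genuine obstacle.
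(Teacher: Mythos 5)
Your proposal is correct and follows essentially the same route as the paper: both rest on the identity $\delta Y_{s,t}-\hat{\delta}Y_{s,t}=(S_{t-s}-I)Y_{s}$, the estimate $\left|\left(S_{t-s}-I\right)Y_{s}\right|_{\gamma-\beta}\lesssim\left|t-s\right|^{\beta}\left|Y_{s}\right|_{\gamma}$ from Proposition \ref{asp}(ii), and a triangle-inequality argument giving the two-sided norm bound and hence the set equality. No gaps; the uniformity of the constant you flag is indeed harmless since $T$ is fixed.
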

\begin{proof}
	For every $Y \in CL_{m}\mathcal{H}_{\beta}^{e}$, by Proposition \ref{asp} 
	we have 
	\begin{equation*}
		\left\|\delta Y_{s, t}-\hat{\delta} Y_{s, t}\right\|_{m, \gamma-\beta}=\left\|S_{t-s}Y_{s}-Y_{s}\right\|_{m, \gamma-\beta} \lesssim \left|t-s\right|^{\beta}\left\|Y_{s}\right\|_{m, \gamma}, \quad \forall \left(s, t\right) \in \Delta_{2},
	\end{equation*}
	which gives 
	\begin{equation*}
		\left\|\delta Y-\hat{\delta} Y\right\|_{\beta, m, \gamma-\beta} \lesssim \left\|Y\right\|_{0, m, \gamma}. 
	\end{equation*}
	Hence, $E^{\beta}L_{m}\mathcal{H}_{\gamma}^{e}=\hat{C}^{\beta}L_{m}\mathcal{H}_{\gamma-\beta}^{e} \cap CL_{m}\mathcal{H}_{\gamma}^{e}$ and 
	the estimate \eqref{ne1} holds. 
\end{proof}
Similarly, define 
$E^{\beta}L_{\infty}\mathcal{L}_{\gamma_{1}, \gamma_{2}}\left(\mathcal{H}^{e_{1}}, \mathcal{H}^{e_{2}}\right)$ 
as the space of measurable adapted processes 
$f: \left[0, T\right] \times \Omega \rightarrow \mathcal{L}\left(\mathcal{H}_{\gamma_{1}-\beta}^{e_{1}}, \mathcal{H}_{\gamma_{2}-\beta}^{e_{2}}\right) \cap \mathcal{L}\left(\mathcal{H}_{\gamma_{1}}^{e_{1}}, \mathcal{H}_{\gamma_{2}}^{e_{2}}\right)$ 
such that 
\begin{align*}
	\left\|f\right\|_{E^{\beta}L_{\infty}\mathcal{L}_{\gamma_{1}, \gamma_{2}}}&:=\sup_{t \in \left[0, T\right]}\left(\left\|f_{t}\right\|_{\infty, \left(\gamma_{1}-\beta, \gamma_{2}-\beta\right)\text{-}op}+\left\|f_{t}\right\|_{\infty, \left(\gamma_{1}, \gamma_{2}\right)\text{-}op}\right)\\
	&\quad+\sup_{0 \leq s < t \leq T}\frac{\left\|\delta f_{s, t}\right\|_{\infty, \left(\gamma_{1}-\beta, \gamma_{2}-\beta\right)\text{-}op}}{|t-s|^{\beta}} < \infty. 
\end{align*}
\subsection{Pathwise Hölder continuous spaces}
For $\eta: \left[0, T\right] \rightarrow \mathbb{R}^{e}$, 
we can similarly define 
$\delta \eta$ as the increment of $\eta$ and 
\begin{equation*}
	\left|\delta \eta\right|_{\alpha}:=\sup_{0 \leq s < t \leq T}\frac{\left|\delta \eta_{s, t}\right|}{|t-s|^{\alpha}}. 
\end{equation*}
Then space of continuous paths $\eta: \left[0, T\right] \rightarrow \mathbb{R}^{e}$ such that 
$\left|\delta \eta\right|_{\alpha} < \infty$ is denoted by 
$C^{\alpha}\left(\left[0, T\right], \mathbb{R}^{e}\right)$. 
For $A: \Delta_{2} \rightarrow \mathcal{H}_{\gamma}^{e}$, 
define 
\begin{equation*}
	\left|A\right|_{\beta, \gamma}:=\sup_{0 \leq s < t \leq T}\frac{\left|A_{s, t}\right|_{\gamma}}{|t-s|^{\beta}}.
\end{equation*}
Denote by $L_{m}C_{2}^{\beta}\mathcal{H}_{\gamma}^{e}$ the space of measurable adapted processes 
$A: \Delta_{2} \times \Omega \rightarrow \mathcal{H}_{\gamma}^{e}$ such that 
$A \in L^{m}\left(\Omega, C\left(\Delta_{2}, \mathcal{H}_{\gamma}^{e}\right)\right)$ 
and 
\begin{equation*}
	\left\|A\right\|_{m, \beta, \gamma}:=\left\|\left|A\right|_{\beta, \gamma}\right\|_{m} < \infty. 
\end{equation*}
Then the Dominated Convergence Theorem gives $L_{m}C_{2}^{\beta}\mathcal{H}_{\gamma}^{e} \subset C_{2}^{\beta}L_{m}\mathcal{H}_{\gamma}^{e}$.  
Write $L_{m}C\mathcal{H}_{\gamma}^{e}:=L^{m}\left(\Omega, C\left(\left[0, T\right], \mathcal{H}_{\gamma}^{e}\right)\right)$ 
and 
\begin{equation*}
	\left\|Y\right\|_{m, 0, \gamma}=\left\|\left|Y\right|_{0, \gamma}\right\|_{m}:=\left\|\sup_{t \in \left[0, T\right]}\left|Y_{t}\right|_{\gamma}\right\|_{m}. 
\end{equation*}
Define $L_{m}C^{\beta}\mathcal{H}_{\gamma}^{e}$ (resp. $L_{m}\hat{C}^{\beta}\mathcal{H}_{\gamma}^{e}$) 
as the space of continuous adapted processes $Y: \left[0, T\right] \times \Omega \rightarrow \mathcal{H}_{\gamma}^{e}$ 
such that $Y \in L_{m}C\mathcal{H}_{\gamma}^{e}$ and 
$\left\|\delta Y\right\|_{m, \beta, \gamma}$ (resp. $\left\|\hat{\delta} Y\right\|_{m, \beta, \gamma}$) is finite. 
Similarly, $L_{m}C^{\beta}\mathcal{H}_{\gamma}^{e} \subset C^{\beta}L_{m}\mathcal{H}_{\gamma}^{e}$ and 
$L_{m}\hat{C}^{\beta}\mathcal{H}_{\gamma}^{e} \subset \hat{C}^{\beta}L_{m}\mathcal{H}_{\gamma}^{e}$. 
Conversely, we need the following version of the Kolmogorov Criterion. 
\begin{proposition} \label{kc}
	Let $Y \in C^{\beta}L_{m}\mathcal{H}_{\gamma}^{e}$ 
	(resp. $Y \in \hat{C}^{\beta}L_{m}\mathcal{H}_{\gamma}^{e}$) 
	be a continuous process such that $\frac{1}{\beta} < m < \infty$. 
	Then $Y \in L_{m}C^{\theta}\mathcal{H}_{\gamma}^{e}$ 
	(resp. $Y \in L_{m}\hat{C}^{\theta}\mathcal{H}_{\gamma}^{e}$) 
	for every $\theta \in \left(0, \beta-\frac{1}{m}\right)$ and we have 
	\begin{equation*}
		\left\|\delta Y\right\|_{m, \theta, \gamma} \lesssim T^{\varepsilon}\left\|\delta Y\right\|_{\beta, m, \gamma}\quad (resp.\ \left\|\hat{\delta} Y\right\|_{m, \theta, \gamma} \lesssim T^{\varepsilon}\left\|\hat{\delta}Y\right\|_{\beta, m, \gamma}),
	\end{equation*}
	for every $\varepsilon \in \left(0, \beta-\frac{1}{m}-\theta\right)$.
\end{proposition}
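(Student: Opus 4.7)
The plan is to apply the Garsia--Rodemich--Rumsey (GRR) inequality in each of the two cases, working in the Banach space $\mathcal{H}_\gamma^e$ on the interval $[0,T]$. In both cases, the hypothesis $m\beta > 1$ is exactly what makes the associated double integral finite in expectation, which is the key integrability requirement driving GRR.

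For the ordinary case $Y \in C^\beta L_m \mathcal{H}_\gamma^e$, I would apply the classical GRR inequality directly to the continuous $\mathcal{H}_\gamma^e$-valued path $Y$. With the choices $\Psi(x) = x^m$ and $p(u) = u^{\theta + 2/m}$, GRR gives the pathwise bound
\[
|\delta Y_{s,t}|_\gamma \lesssim B(\omega)^{1/m}\, |t-s|^\theta, \qquad B := \iint_{[0,T]^2} \frac{|\delta Y_{u,v}|_\gamma^m}{|u-v|^{m\theta + 2}}\,du\,dv.
\]
By Fubini together with the hypothesis $\|\delta Y_{u,v}\|_{m,\gamma} \lesssim |u-v|^\beta$, the condition $\theta < \beta - 1/m$ makes $\mathbb{E} B$ finite with $\mathbb{E} B \lesssim T^{m(\beta-\theta)}\|\delta Y\|_{\beta,m,\gamma}^m$. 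Taking the $L^m$-norm in $\omega$ then yields the claimed estimate, and one extracts the factor $T^\varepsilon$ for any $\varepsilon \in (0,\beta - 1/m - \theta)$ from the resulting $T^{\beta-\theta}$.

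For the mild case $Y \in \hat{C}^\beta L_m \mathcal{H}_\gamma^e$, the key extra structural ingredient is the Chen-type identity
\[
\hat{\delta} Y_{s,t} = S_{t-r}\, \hat{\delta} Y_{s,r} + \hat{\delta} Y_{r,t}, \qquad \forall (s,r,t) \in \Delta_3,
\]
which is immediate from $\hat{\delta} Y_{s,t} = Y_t - S_{t-s}Y_s$ and the semigroup property $S_{t-s} = S_{t-r}S_{r-s}$. Iterating this along the dyadic partition $t_k^n := s + k(t-s)2^{-n}$ produces the telescoping formula
\[
\hat{\delta} Y_{s,t} = \sum_{k=0}^{2^n - 1} S_{t-t_{k+1}^n}\, \hat{\delta} Y_{t_k^n,\, t_{k+1}^n}.
\]
Combined with the uniform bound $|S_u v|_\gamma \lesssim |v|_\gamma$ for $u \geq 0$ (Proposition \ref{asp}(i) with $\gamma_1 = \gamma_2 = \gamma$), this lets one emulate the GRR argument for the two-parameter function $A_{s,t} := \hat{\delta} Y_{s,t}$ on $\Delta_2$: with the same $\Psi$ and $p$ as above, the analogous double integral $B$ (with $|\hat{\delta} Y_{u,v}|_\gamma$ in place of $|\delta Y_{u,v}|_\gamma$) satisfies the same moment bound, yielding $|\hat{\delta} Y_{s,t}|_\gamma \lesssim B^{1/m}|t-s|^\theta$, and the claim follows after taking the $L^m$-norm in $\omega$.

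The main obstacle is bridging the classical, single-variable GRR to the mild-increment setting: $\hat{\delta} Y$ is a two-parameter object on $\Delta_2$ that does not arise as the increment of any one-parameter function on $[0,T]$, so the standard GRR is not directly applicable. The dyadic telescoping identity and the uniform boundedness of $(S_u)_{u \geq 0}$ on $\mathcal{H}_\gamma^e$ are precisely the two ingredients that allow one to recover a GRR-type pathwise estimate for $\hat{\delta} Y$. Once this is established, the $L^m$-to-pathwise upgrade and the extraction of the $T^\varepsilon$ factor proceed as in the ordinary case.
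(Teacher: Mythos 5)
The paper states Proposition \ref{kc} \emph{without proof} --- it is invoked as a known ``version of the Kolmogorov Criterion'' --- so there is no internal argument to compare yours against; I assess it on its own merits. Your treatment of the first case is correct and essentially complete: with $\Psi(x)=x^{m}$ and $p(u)=u^{\theta+2/m}$, the classical Garsia--Rodemich--Rumsey inequality gives the pathwise bound $\left|\delta Y_{s,t}\right|_{\gamma}\lesssim B^{1/m}\left|t-s\right|^{\theta}$, Fubini together with $\theta<\beta-\frac{1}{m}$ gives $\mathbb{E}B\lesssim T^{m\left(\beta-\theta\right)}\left\|\delta Y\right\|_{\beta,m,\gamma}^{m}$, and since $\beta-\theta-\varepsilon>\frac{1}{m}>0$ the resulting factor $T^{\beta-\theta}$ is stronger than the stated $T^{\varepsilon}$ on a bounded horizon. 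The only omission is cosmetic: membership in $L_{m}C^{\theta}\mathcal{H}_{\gamma}^{e}$ also requires $\left\|\sup_{t}\left|Y_{t}\right|_{\gamma}\right\|_{m}<\infty$, which follows in one line from $\left|Y_{t}\right|_{\gamma}\leq\left|Y_{0}\right|_{\gamma}+T^{\theta}\left|\delta Y\right|_{\theta,\gamma}$.

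The mild case is where the real content of the proposition sits, and there your proposal identifies exactly the right ingredients --- the Chen-type identity $\hat{\delta}Y_{s,t}=S_{t-r}\hat{\delta}Y_{s,r}+\hat{\delta}Y_{r,t}$ (which you verify correctly) and the uniform boundedness of $S$ on $\mathcal{H}_{\gamma}$ --- but the decisive sentence ``this lets one emulate the GRR argument'' is asserted rather than proved, and that emulation is the entire difficulty. The claim is in fact true, for the following reason, which a complete write-up must spell out: Garsia's proof of GRR uses only (i) telescoping of the increment along measure-theoretically \emph{selected} sequences $t_{n}\downarrow s$ and $s_{n}\uparrow t$ (not dyadic ones), and (ii) vanishing of the remainder term by continuity. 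Both survive in the mild setting: iterating your Chen identity along an arbitrary decreasing sequence gives $\hat{\delta}Y_{s,t_{0}}=\sum_{k=1}^{n}S_{t_{0}-t_{k-1}}\hat{\delta}Y_{t_{k},t_{k-1}}+S_{t_{0}-t_{n}}\hat{\delta}Y_{s,t_{n}}$; the remainder tends to $0$ because $\hat{\delta}Y$ is jointly continuous on $\Delta_{2}$ (this uses pathwise continuity of $Y$ \emph{and} strong continuity of $S$, and should be checked explicitly); and the semigroup bound converts the sum into $C\sum_{k}\left|\hat{\delta}Y_{t_{k},t_{k-1}}\right|_{\gamma}$, to which Garsia's pigeonhole estimates apply verbatim, at the cost of a factor $C$ in the final constant (one also symmetrizes the double integral over $\left[0,T\right]^{2}$, since $\hat{\delta}Y$ lives only on $\Delta_{2}$). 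So your dyadic telescoping formula is not actually the identity the GRR scheme consumes; it would instead be the right tool if you ran Kolmogorov's dyadic chaining argument, which also works here with the same two ingredients and whose natural loss $\sum_{n}2^{-n\left(\beta-\theta-1/m\right)}$ matches the stated range of $\varepsilon$. In summary: no step of your proposal is wrong, the first case is done, and the second case is a correct skeleton whose key lemma (a two-parameter GRR for mild increments) is named but left unproved --- fillable, but not by citation.
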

Define $L_{m}E^{\beta}\mathcal{H}_{\gamma}^{e}:=L_{m}C^{\beta}\mathcal{H}_{\gamma-\beta}^{e} \cap L_{m}C\mathcal{H}_{\gamma}^{e}$, 
endowed with the norm 
\begin{equation*}
	\left\|Y\right\|_{L_{m}E^{\beta}\mathcal{H}_{\gamma}}:=\left\|\delta Y\right\|_{m, \beta, \gamma-\beta}+\left\|Y\right\|_{m, 0, \gamma}.
\end{equation*}
Then $L_{m}E^{\beta}\mathcal{H}_{\gamma}^{e}$ is continuously embedded into 
$E^{\beta}L_{m}\mathcal{H}_{\gamma}^{e}$. 
Similar to Proposition \ref{ne}, we have the following result. 
\begin{proposition} \label{nep}
	$L_{m}E^{\beta}\mathcal{H}_{\gamma}^{e}=L_{m}\hat{C}^{\beta}\mathcal{H}_{\gamma-\beta}^{e} \cap L_{m}C\mathcal{H}_{\gamma}^{e}$ 
	and we have 
	\begin{equation*}
		\left\|Y\right\|_{L_{m}E^{\beta}\mathcal{H}_{\gamma}} \lesssim \left\|\hat{\delta} Y\right\|_{m, \beta, \gamma-\beta}+\left\|Y\right\|_{m, 0, \gamma} \lesssim \left\|Y\right\|_{L_{m}E^{\beta}\mathcal{H}_{\gamma}}. 
	\end{equation*}
\end{proposition}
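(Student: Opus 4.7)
The plan is to mimic the proof of Proposition \ref{ne} but apply the key estimate pathwise (for a.e.\ $\omega$) before taking $L^{m}$-norms. The only algebraic identity needed is
\begin{equation*}
    \delta Y_{s,t}-\hat{\delta} Y_{s,t}=S_{t-s}Y_{s}-Y_{s},\quad \forall (s,t)\in\Delta_{2},
\end{equation*}
which holds everywhere on $\Omega$ by the definitions of $\delta$ and $\hat{\delta}$.

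The main step is to apply Proposition \ref{asp}(ii) with the choice $\gamma_{1}=\gamma-\beta$ and $\gamma_{2}=\gamma$; since $\beta\in(0,1)$, the condition $\gamma_{1}\le\gamma_{2}<\gamma_{1}+1$ is satisfied, and we obtain, for each fixed $\omega$ and each $(s,t)\in\Delta_{2}$,
\begin{equation*}
    |S_{t-s}Y_{s}(\omega)-Y_{s}(\omega)|_{\gamma-\beta}\lesssim |t-s|^{\beta}\,|Y_{s}(\omega)|_{\gamma}\le |t-s|^{\beta}\,|Y(\omega)|_{0,\gamma}.
\end{equation*}
Dividing by $|t-s|^{\beta}$ and taking the supremum over $(s,t)\in\Delta_{2}$ yields the pathwise inequality $|\delta Y-\hat{\delta} Y|_{\beta,\gamma-\beta}(\omega)\lesssim |Y(\omega)|_{0,\gamma}$, and then taking $L^{m}(\Omega)$-norms gives
\begin{equation*}
    \|\delta Y-\hat{\delta} Y\|_{m,\beta,\gamma-\beta}\lesssim \|Y\|_{m,0,\gamma}.
\end{equation*}

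From this single estimate, both the set equality $L_{m}E^{\beta}\mathcal{H}_{\gamma}^{e}=L_{m}\hat{C}^{\beta}\mathcal{H}_{\gamma-\beta}^{e}\cap L_{m}C\mathcal{H}_{\gamma}^{e}$ and the two-sided norm equivalence follow at once by the triangle inequality applied to $\delta Y=\hat{\delta} Y+(\delta Y-\hat{\delta} Y)$ (for the $\lesssim$ direction) and to $\hat{\delta} Y=\delta Y-(\delta Y-\hat{\delta} Y)$ (for the $\gtrsim$ direction), combined with the trivial bound $\|Y\|_{m,0,\gamma}\le \|Y\|_{L_{m}E^{\beta}\mathcal{H}_{\gamma}}$.

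There is essentially no obstacle beyond a brief measurability check: since $Y$ has continuous paths in $\mathcal{H}_{\gamma}^{e}$, the map $(s,t)\mapsto |S_{t-s}Y_{s}-Y_{s}|_{\gamma-\beta}$ is a.s.\ continuous on $\Delta_{2}$ (using strong continuity of $(S_{t})$ on $\mathcal{H}_{\gamma-\beta}^{e}$), so the pathwise supremum is measurable and the $L^{m}$-norm on the right-hand side is well-defined. Thus the proof reduces to one application of Proposition \ref{asp}(ii) plus the triangle inequality, exactly parallel to Proposition \ref{ne}.
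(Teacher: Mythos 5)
Your proof is correct and takes essentially the same approach as the paper, which states Proposition \ref{nep} only as ``similar to Proposition \ref{ne}'': the intended argument is precisely your adaptation, applying Proposition \ref{asp}(ii) pathwise to the identity $\delta Y_{s,t}-\hat{\delta}Y_{s,t}=S_{t-s}Y_{s}-Y_{s}$ and taking the supremum over $(s,t)\in\Delta_{2}$ \emph{before} taking $L^{m}$-norms, which is the only point where the pathwise spaces differ from those in Proposition \ref{ne}. Your measurability remark and the concluding triangle-inequality step are both correct, so nothing is missing.
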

Similarly, define 
$L_{\infty}E^{\beta}\mathcal{L}_{\gamma_{1}, \gamma_{2}}\left(\mathcal{H}^{e_{1}}, \mathcal{H}^{e_{2}}\right)$ 
as the space of continuous adapted processes 
$f: \left[0, T\right] \times \Omega \rightarrow \mathcal{L}\left(\mathcal{H}_{\gamma_{1}-\beta}^{e_{1}}, \mathcal{H}_{\gamma_{2}-\beta}^{e_{2}}\right) \cap \mathcal{L}\left(\mathcal{H}_{\gamma_{1}}^{e_{1}}, \mathcal{H}_{\gamma_{2}}^{e_{2}}\right)$ 
such that 
\begin{equation*}
	\left\|f\right\|_{L_{\infty}E^{\beta}\mathcal{L}_{\gamma_{1}, \gamma_{2}}}:=\left\|\sup_{t \in \left[0, T\right]}\left(\left|f_{t}\right|_{\left(\gamma_{1}-\beta, \gamma_{2}-\beta\right)\text{-}op}+\left|f_{t}\right|_{\left(\gamma_{1}, \gamma_{2}\right)\text{-}op}\right)+\sup_{0 \leq s < t \leq T}\frac{\left|\delta f_{s, t}\right|_{\left(\gamma_{1}-\beta, \gamma_{2}-\beta\right)\text{-}op}}{|t-s|^{\beta}}\right\|_{\infty} < \infty. 
\end{equation*}
Then $L_{\infty}E^{\beta}\mathcal{L}_{\gamma_{1}, \gamma_{2}}\left(\mathcal{H}^{e_{1}}, \mathcal{H}^{e_{2}}\right)$ is continuously embedded into 
$E^{\beta}L_{\infty}\mathcal{L}_{\gamma_{1}, \gamma_{2}}\left(\mathcal{H}^{e_{1}}, \mathcal{H}^{e_{2}}\right)$. 

\section{Young convolution integrals for stochastic integrands}
\label{Sec3}
In this section, we will establish through two different approaches the 
Young convolution integrals of stochastic processes 
against given $\eta \in C^{\alpha}\left(\left[0, T\right], \mathbb{R}^{e}\right)$. 
\subsection{Stochastic Young convolution integrals}
We first state the following version of the mild sewing lemma 
introduced by Gubinelli and Tindel \cite{GT10}*{Theorem 3.5}. 
It can be proved in an analogous manner to the proof of \cite{GH19}*{Theorem 2.4}. 
\begin{lemma} \label{msl}
	Let $A \in C_{2}^{\alpha}L_{m}\mathcal{H}_{\gamma}$. 
	Assume there exist positive constants $K, \varepsilon$ and a process 
	$\Lambda: \Delta_{3} \times \Omega \rightarrow \mathcal{H}_{\gamma}$ such that 
	\begin{equation*}
		\hat{\delta} A_{s, r, t}=S_{t-r}\Lambda_{s, r, t}, \quad \left\|\Lambda_{s, r, t}\right\|_{m, \gamma} \leq K\left|t-s\right|\left|t-r\right|^{\varepsilon}, \quad \forall \left(s, r, t\right) \in \Delta_{3}. 
	\end{equation*}
	Then there exists unique $\mathcal{A} \in \hat{C}^{\alpha}L_{m}\mathcal{H}_{\gamma}$ with $\mathcal{A}_{0}=0$ such that 
	\begin{equation*}
		\lim_{\pi \in \mathcal{P}\left[s, t\right], \left|\pi\right| \rightarrow 0}\left\|\hat{\delta}\mathcal{A}_{s, t}-\sum_{\left[r, v\right] \in \pi}S_{t-v}A_{r, v}\right\|_{m, \gamma}=0, \quad \forall \left(s, t\right) \in \Delta_{2}.
	\end{equation*}
	Moreover, for every $\theta \in \left[0, 1+\varepsilon\right)$ we have 
	\begin{equation*}
		\left\|\hat{\delta}\mathcal{A}_{s, t}-A_{s, t}\right\|_{m, \gamma+\theta} \lesssim K\left|t-s\right|^{1+\varepsilon-\theta}, \quad \forall \left(s, t\right) \in \Delta_{2}.
	\end{equation*}
\end{lemma}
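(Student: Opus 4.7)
The plan is to adapt the classical sewing construction to the mild setting, running the dyadic argument on the semigroup-compensated Riemann sums and exploiting the factorization $\hat{\delta}A_{s,r,t}=S_{t-r}\Lambda_{s,r,t}$ together with the smoothing bound from Proposition~\ref{asp}. For each $(s,t)\in\Delta_2$ and $\pi\in\mathcal{P}[s,t]$ I set
\begin{equation*}
I^{\pi}_{s,t}:=\sum_{[u,v]\in\pi}S_{t-v}A_{u,v},
\end{equation*}
and note the key identity: inserting the midpoint $r$ of some cell $[u,v]\in\pi$ changes $I^\pi_{s,t}$ by exactly $-S_{t-r}\Lambda_{u,r,v}$, because $S_{t-v}S_{v-r}=S_{t-r}$ and $\hat{\delta}A_{u,r,v}=S_{v-r}\Lambda_{u,r,v}$.

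Applying this along dyadic partitions $\pi_n$ of $[s,t]$ (with $|\pi_n|=2^{-n}(t-s)$), the telescoping increment $I^{\pi_{n+1}}_{s,t}-I^{\pi_n}_{s,t}$ is a sum of $2^n$ such $-S_{t-r}\Lambda$-terms, each bounded in $\|\cdot\|_{m,\gamma}$ by $K\cdot 2^{-n}(t-s)\cdot(2^{-n-1}(t-s))^{\varepsilon}$. Summing a geometric series in $2^{-n\varepsilon}$ shows that $(I^{\pi_n}_{s,t})_n$ is Cauchy in $L^m(\Omega,\mathcal{H}_\gamma^e)$ with some limit $J_{s,t}$ and yields the baseline estimate $\|J_{s,t}-A_{s,t}\|_{m,\gamma}\lesssim K|t-s|^{1+\varepsilon}$. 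For the sharper $\theta$-bound I replace this with $\|S_{t-r}\Lambda\|_{m,\gamma+\theta}\lesssim (t-r)^{-\theta}\|\Lambda\|_{m,\gamma}$ from Proposition~\ref{asp}, then sum the resulting weights over all cells of each dyadic level; the dominant contribution is the ``last cell'' (where $t-r$ is smallest, forcing a factor $2^{n\theta}$), and its geometric series converges precisely when $\theta<1+\varepsilon$. I expect the careful accounting of this singular sum to be the main technical obstacle.

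Next I establish the mild cocycle identity $J_{s,t}=S_{t-r}J_{s,r}+J_{r,t}$ for $s\le r\le t$: choose deterministic partitions $\pi_n$ of $[s,t]$ containing $r$ with $|\pi_n|\to 0$, split them into partitions of $[s,r]$ and $[r,t]$, factor out $S_{t-r}$ on the left piece via $S_{t-v}=S_{t-r}S_{r-v}$, and take $L^m$-limits. Setting $\mathcal{A}_t:=J_{0,t}$ then gives $\mathcal{A}_0=0$, $\mathcal{F}_t$-measurability (the approximants are finite $\mathcal{F}_t$-measurable sums), and $\hat{\delta}\mathcal{A}_{s,t}=J_{s,t}$. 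The Hölder estimate $\|\hat{\delta}\mathcal{A}_{s,t}\|_{m,\gamma}\lesssim\|A_{s,t}\|_{m,\gamma}+K|t-s|^{1+\varepsilon}\lesssim|t-s|^{\alpha}$ on the bounded horizon puts $\mathcal{A}\in\hat{C}^{\alpha}L_m\mathcal{H}_\gamma$, and $L^m$-continuity of $t\mapsto\mathcal{A}_t$ follows from strong continuity of $(S_t)$ on $\mathcal{H}_\gamma^e$ combined with the above bound on $J$.

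Finally, convergence along arbitrary partitions $\pi$ with $|\pi|\to 0$ is immediate from the mild cocycle: by iterating $J_{s,t}=S_{t-v}J_{s,v}+J_{v,t}$ along $\pi$, one obtains
\begin{equation*}
\hat{\delta}\mathcal{A}_{s,t}-\sum_{[u,v]\in\pi}S_{t-v}A_{u,v}=\sum_{[u,v]\in\pi}S_{t-v}(J_{u,v}-A_{u,v}),
\end{equation*}
whose $\|\cdot\|_{m,\gamma}$-norm is controlled by $K\sum|v-u|^{1+\varepsilon}\le K|t-s||\pi|^{\varepsilon}\to 0$. For uniqueness, any competing $\mathcal{A}'$ must satisfy $\hat{\delta}\mathcal{A}'_{s,t}=J_{s,t}=\hat{\delta}\mathcal{A}_{s,t}$ by the uniqueness of the Cauchy limit, so the difference $\mathcal{B}=\mathcal{A}-\mathcal{A}'$ obeys $\mathcal{B}_t=S_{t-s}\mathcal{B}_s$ for all $s\le t$; combined with $\mathcal{B}_0=0$, this forces $\mathcal{B}\equiv 0$.
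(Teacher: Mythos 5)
Your overall architecture --- dyadic construction of $J_{s,t}$, the insertion identity (inserting a point $r$ into a cell $[u,v]$ changes the compensated Riemann sum by exactly $-S_{t-r}\Lambda_{u,r,v}$), the weighted summation giving the $\theta$-bound with the singular factor $(t-r)^{-\theta}$ from Proposition \ref{asp}, the derivation of arbitrary-partition convergence from the mild cocycle, and the uniqueness argument --- is sound, and it is essentially the sewing strategy that the paper invokes by deferring to the proof of Gerasimovi\v{c}s--Hairer. The insertion identity and its sign are correct, the geometric series in $2^{-n\varepsilon}$ is correct, and your accounting of the $\theta$-loss (the last cell dominates and its series converges precisely when $\theta<1+\varepsilon$) checks out.

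There is, however, a genuine circularity at the cocycle step. You define $J_{s,t}$ only as the $L^{m}$-limit of $I^{\pi_{n}}_{s,t}$ along the \emph{dyadic} partitions of $[s,t]$. To prove $J_{s,t}=S_{t-r}J_{s,r}+J_{r,t}$ you pick partitions $\pi_{n}$ of $[s,t]$ containing $r$ and ``take $L^{m}$-limits''; but these $\pi_{n}$ are not dyadic partitions of $[s,t]$, so the claim $I^{\pi_{n}}_{s,t}\to J_{s,t}$ is precisely the assertion that the limit is independent of the partition sequence --- which you have not established at that point. You only obtain convergence along arbitrary partitions in your final step, and that step is itself deduced from the cocycle identity, so as written the argument closes on itself. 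Partition-independence is the real content of any sewing lemma and must be earned separately. Two standard repairs: (i) prove the bound $\left\|I^{\pi}_{u,v}-A_{u,v}\right\|_{m,\gamma}\lesssim K\left|v-u\right|^{1+\varepsilon}$ uniformly over \emph{arbitrary} partitions $\pi$ of $[u,v]$ via the pigeonhole point-removal argument (repeatedly delete an interior point whose two adjacent cells have total length at most $2\left|v-u\right|/N$, paying $K(2\left|v-u\right|/N)^{1+\varepsilon}$ each time, summable since $1+\varepsilon>1$), then compare any two partitions through their common refinement; this makes $I^{\pi}_{s,t}$ Cauchy as $\left|\pi\right|\to0$ with a partition-free limit, and the cocycle follows by splitting at $r$; or (ii) observe that midpoint-additivity $J_{s,t}=S_{t-m}J_{s,m}+J_{m,t}$ comes for free from the dyadic structure, extend it by induction to all dyadic points of $[s,t]$, and pass to general $r$ by a continuity argument in the endpoint. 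Either repair is standard, but without one of them the chain ``cocycle $\Rightarrow$ arbitrary-partition convergence'' does not get off the ground.
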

Then we give the following result on stochastic Young convolution integrals. 
\begin{proposition} \label{yci}
	Let $\eta \in C^{\alpha}\left(\left[0, T\right], \mathbb{R}^{e}\right)$   
	and $Y \in E^{\beta}L_{m}\mathcal{H}_{\gamma}^{e}$ for some $\beta \in \left(1-\alpha, \alpha\right)$. 
	Then there exists unique 
	\begin{equation*}
		Z:=\int_{0}^{\cdot}S_{\cdot-r}Y_{r}d\eta_{r} \in \hat{C}^{\alpha}L_{m}\mathcal{H}_{\gamma-\beta} 
	\end{equation*}
	with $Z_{0}=0$ such that for every $\left(s, t\right) \in \Delta_{2}$, 
	\begin{equation} \label{ycid}
		\hat{\delta}Z_{s, t}=\int_{s}^{t}S_{t-r}Y_{r}d\eta_{r}=\lim_{\pi \in \mathcal{P}\left[s, t\right], \left|\pi\right| \rightarrow 0}\sum_{\left[r, v\right] \in \pi}S_{t-r}Y_{r}\delta \eta_{r, v}
	\end{equation}
	holds in $L^{m}\left(\Omega, \mathcal{H}_{\gamma-\beta}\right)$. 
	Moreover, $Z \in \hat{C}^{\alpha-\theta}L_{m}\mathcal{H}_{\gamma+\theta}$ for every $\theta \in \left[0, \alpha\right)$ and we have 
	\begin{equation} \label{ycie1}
		\left\|\hat{\delta}Z\right\|_{\alpha-\theta, m, \gamma+\theta} \lesssim \left\|Y\right\|_{E^{\beta}L_{m}\mathcal{H}_{\gamma}}\left|\delta \eta\right|_{\alpha}.
	\end{equation}
	As a consequence, the convolution map 
	\begin{equation*}
		C^{\alpha}\left(\left[0, T\right], \mathbb{R}^{e}\right) \times E^{\beta}L_{m}\mathcal{H}_{\gamma}^{e} \rightarrow E^{\beta}L_{m}\mathcal{H}_{\gamma+\theta}: \quad \left(\eta, Y\right) \mapsto Z
	\end{equation*}
	is a bounded bilinear map and we have 
	\begin{equation} \label{ycie2}
		\left\|Z\right\|_{E^{\beta}L_{m}\mathcal{H}_{\gamma+\theta}} \lesssim T^{\alpha-\left(\beta \vee \theta\right)}\left\|Y\right\|_{E^{\beta}L_{m}\mathcal{H}_{\gamma}}\left|\delta \eta\right|_{\alpha}.
	\end{equation}
\end{proposition}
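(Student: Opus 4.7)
The plan is to construct $Z$ by applying the mild sewing lemma (Lemma \ref{msl}) to the natural local germ $A_{s,t} := S_{t-s} Y_s\, \delta\eta_{s,t}$, and then to read off \eqref{ycid}, \eqref{ycie1} and \eqref{ycie2} from the Riemann-sum convergence and remainder estimate produced by that lemma. Before invoking the lemma I would first check that $A \in C_2^{\alpha} L_m \mathcal{H}_{\gamma-\beta}$: adaptedness is inherited from $Y$; continuity of $A$ into $L^m(\Omega, \mathcal{H}_{\gamma-\beta}^{e})$ uses the strong continuity of $(S_t)$ together with $Y \in CL_m \mathcal{H}_\gamma^e$; and the bound $\|A_{s,t}\|_{m, \gamma-\beta} \lesssim \|Y\|_{0, m, \gamma}|\delta\eta|_\alpha (t-s)^\alpha$ follows from the boundedness of $S$ on $\mathcal{H}_{\gamma-\beta}$ and the embedding $\mathcal{H}_\gamma \hookrightarrow \mathcal{H}_{\gamma-\beta}$.

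A direct computation using $\delta\eta_{s,t} - \delta\eta_{s,r} = \delta\eta_{r,t}$ produces the key algebraic identity
\begin{equation*}
\hat\delta A_{s,r,t} = (S_{t-s} Y_s - S_{t-r} Y_r)\,\delta\eta_{r,t} = -S_{t-r}\,\hat\delta Y_{s,r}\,\delta\eta_{r,t} =: S_{t-r} \Lambda_{s,r,t},
\end{equation*}
so $\Lambda_{s,r,t} = -\hat\delta Y_{s,r}\,\delta\eta_{r,t}$. By Proposition \ref{ne} one has $\|\hat\delta Y_{s,r}\|_{m, \gamma-\beta} \lesssim \|Y\|_{E^\beta L_m \mathcal{H}_\gamma} (r-s)^\beta$, hence
\begin{equation*}
\|\Lambda_{s,r,t}\|_{m, \gamma-\beta} \lesssim \|Y\|_{E^\beta L_m \mathcal{H}_\gamma} |\delta\eta|_\alpha (r-s)^\beta (t-r)^\alpha.
\end{equation*}
Since $\beta < 1$ and $t-r \leq t-s$, the elementary inequality $(r-s)^\beta (t-r)^\alpha \leq (t-s)(t-r)^{\alpha+\beta-1}$ puts this into the form required by Lemma \ref{msl}, with $\varepsilon = \alpha+\beta-1 > 0$ and $K \lesssim \|Y\|_{E^\beta L_m \mathcal{H}_\gamma}|\delta\eta|_\alpha$. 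This rearrangement is the main technical point of the argument and is precisely where the Young condition $\alpha + \beta > 1$ is used; I expect it to be the trickiest step.

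Lemma \ref{msl}, applied at regularity level $\gamma - \beta$, then delivers a unique $Z \in \hat C^\alpha L_m \mathcal{H}_{\gamma-\beta}$ with $Z_0 = 0$. Since $S_{t-v} A_{r,v} = S_{t-v} S_{v-r} Y_r \delta\eta_{r,v} = S_{t-r} Y_r \delta\eta_{r,v}$, the Riemann-sum convergence in Lemma \ref{msl} is exactly \eqref{ycid}. For the sharper Hölder bound \eqref{ycie1}, I would apply the last conclusion of Lemma \ref{msl} at exponent $\theta' = \theta + \beta \in [0, 1+\varepsilon)$ to obtain
\begin{equation*}
\|\hat\delta Z_{s,t} - A_{s,t}\|_{m, \gamma+\theta} \lesssim K (t-s)^{\alpha-\theta}, \qquad \theta \in [-\beta, \alpha),
\end{equation*}
and combine it with the direct estimate $\|A_{s,t}\|_{m, \gamma+\theta} \lesssim K (t-s)^{\alpha-\theta}$, which follows from Proposition \ref{asp}(i) (the smoothing factor $(t-s)^{-\theta}$ against the Hölder gain $(t-s)^\alpha$). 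This yields \eqref{ycie1} for every $\theta \in [0,\alpha)$.

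Finally, \eqref{ycie2} follows by writing $Z_t = \hat\delta Z_{0,t}$ to control $\|Z\|_{0, m, \gamma+\theta}$, invoking Proposition \ref{ne} to pass from $\hat\delta Z$ to $\delta Z$, and splitting into the cases $\theta \geq \beta$ (where \eqref{ycie1} is re-applied with the index shifted from $\theta$ to $\theta-\beta \in [0,\alpha)$) and $\theta < \beta$ (where the continuous embedding $\mathcal{H}_\gamma \hookrightarrow \mathcal{H}_{\gamma+\theta-\beta}$ together with \eqref{ycie1} at $\theta=0$ suffices). The bilinearity of $(\eta, Y) \mapsto Z$ is inherited from the bilinearity of the Riemann sums in \eqref{ycid} combined with the uniqueness clause of Lemma \ref{msl}.
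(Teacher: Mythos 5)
Your proposal is correct and follows essentially the same route as the paper: the same germ $A_{s,t}=S_{t-s}Y_{s}\delta \eta_{s,t}$, the same identity $\hat{\delta}A_{s,r,t}=-S_{t-r}\hat{\delta}Y_{s,r}\delta \eta_{r,t}$ estimated via Proposition \ref{ne}, the same application of Lemma \ref{msl} at level $\gamma-\beta$ with the shifted exponent $\theta+\beta$ for \eqref{ycie1}, and the same $\theta \gtrless \beta$ case split (the paper's $\left(\theta-\beta\right)^{+}$ formula) for \eqref{ycie2}. Your explicit rearrangement $\left(r-s\right)^{\beta}\left(t-r\right)^{\alpha} \leq \left(t-s\right)\left(t-r\right)^{\alpha+\beta-1}$ and the observation $S_{t-v}A_{r,v}=S_{t-r}Y_{r}\delta \eta_{r,v}$ are details the paper leaves implicit, but they are exactly what its proof relies on.
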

\begin{proof}
	For every $\left(s, t\right) \in \Delta$, define $A_{s, t}:=S_{t-s}Y_{s}\delta \eta_{s, t}$. 
	Clearly, $A$ is a measurable adapted $L^{m}$-integrable process and 
	$A \in C\left(\Delta_{2}, L^{m}\left(\Omega, \mathcal{H}_{\gamma-\beta}\right)\right)$. 
	By Proposition \ref{asp}, 
	\begin{equation*}
		\left\|A_{s, t}\right\|_{m, \gamma-\beta} \lesssim \left\|Y_{s}\right\|_{m, \gamma-\beta}\left|\delta \eta_{s, t}\right| \lesssim \left\|Y\right\|_{E^{\beta}L_{m}\mathcal{H}_{\gamma}}\left|\delta \eta\right|_{\alpha}\left|t-s\right|^{\alpha}, \quad \forall \left(s, t\right) \in \Delta_{2},
	\end{equation*}
	which gives $A \in C_{2}^{\alpha}L_{m}\mathcal{H}_{\gamma-\beta}$. 
	For every $\left(s, r, t\right) \in \Delta_{3}$, we have 
	$\hat{\delta} A_{s, r, t}=-S_{t-r}\hat{\delta}Y_{s, r}\delta \eta_{r, t}$. 
	By Propositions \ref{asp} and \ref{ne}, 
	\begin{equation*}
		\left\|\hat{\delta}Y_{s, r}\delta \eta_{r, t}\right\|_{m, \gamma-\beta} \lesssim \left\|\hat{\delta}Y_{s, r}\right\|_{m, \gamma-\beta}\left|\delta \eta_{r, t}\right| \lesssim \left\|Y\right\|_{E^{\beta}L_{m}\mathcal{H}_{\gamma}}\left|\delta \eta\right|_{\alpha}\left|t-s\right|\left|t-r\right|^{\alpha+\beta-1}.
	\end{equation*}
	Then by Lemma \ref{msl}, there exists unique $Z \in \hat{C}^{\alpha}L_{m}\mathcal{H}_{\gamma-\beta}$ 
	with $Z_{0}=0$ such that \eqref{ycid} holds in $\mathcal{H}_{\gamma-\beta}$ 
	and for every $\theta \in \left[0, \alpha\right)$ we have 
	\begin{equation*}
		\left\|\hat{\delta}Z_{s, t}-S_{t-s}Y_{s}\delta \eta_{s, t}\right\|_{m, \gamma+\theta} \lesssim \left\|Y\right\|_{E^{\beta}L_{m}\mathcal{H}_{\gamma}}\left|\delta \eta\right|_{\alpha}\left|t-s\right|^{\alpha-\theta}, \quad \forall \left(s, t\right) \in \Delta_{2}.
	\end{equation*}
	By Proposition \ref{asp}, 
	\begin{equation*}
		\left\|S_{t-s}Y_{s}\delta \eta_{s, t}\right\|_{m, \gamma+\theta} \lesssim \left|t-s\right|^{-\theta}\left\|Y_{s}\right\|_{m, \gamma}\left|\delta \eta_{s, t}\right|\lesssim \left\|Y\right\|_{E^{\beta}L_{m}\mathcal{H}_{\gamma}}\left|\delta \eta\right|_{\alpha}\left|t-s\right|^{\alpha-\theta}, \quad \forall \left(s, t\right) \in \Delta_{2}.
	\end{equation*}
	Hence, $Z \in \hat{C}^{\alpha-\theta}L_{m}\mathcal{H}_{\gamma+\theta}$ and the estimate \eqref{ycie1} holds. 
	At last, since $Z_{0}=0$ we have 
	\begin{equation*}
		\left\|Z\right\|_{0, m, \gamma+\theta} \lesssim T^{\alpha-\theta}\left\|\hat{\delta}Z\right\|_{\alpha-\theta, m, \gamma+\theta} \lesssim T^{\alpha-\theta}\left\|Y\right\|_{E^{\beta}L_{m}\mathcal{H}_{\gamma}}\left|\delta \eta\right|_{\alpha}.
	\end{equation*}
	Note that 
	\begin{equation*}
		\left\|\hat{\delta}Z\right\|_{\beta, m, \gamma+\theta-\beta} \lesssim T^{\alpha-\left(\beta \vee \theta\right)}\left\|\hat{\delta}Z\right\|_{\alpha-\left(\theta-\beta\right)^{+}, m, \gamma+\left(\theta-\beta\right)^{+}} \lesssim T^{\alpha-\left(\beta \vee \theta\right)}\left\|Y\right\|_{E^{\beta}L_{m}\mathcal{H}_{\gamma}}\left|\delta \eta\right|_{\alpha}.
	\end{equation*}
	By Proposition \ref{ne}, $Z \in E^{\beta}L_{m}\mathcal{H}_{\gamma+\theta}$ and the estimate \eqref{ycie2} holds. 
\end{proof}
\subsection{Pathwise Young convolution integrals}
To define Young convolution integrals pathwisely, 
we need the following mild sewing lemma (see also \cite{GH19}*{Theorem 2.4}). 
\begin{lemma}
	Let $A \in L_{m}C_{2}^{\alpha}\mathcal{H}_{\gamma}$. 
	Assume there exist a positive constant $\varepsilon$, 
	$L^{m}$-integrable positive random variable $K$ and process 
	$\Lambda: \Delta_{3} \times \Omega \rightarrow \mathcal{H}_{\gamma}$ such that 
	\begin{equation*}
		\hat{\delta} A_{s, r, t}=S_{t-r}\Lambda_{s, r, t}, \quad \left|\Lambda_{s, r, t}\right|_{\gamma} \leq K\left|t-s\right|\left|t-r\right|^{\varepsilon}, \quad \forall \left(s, r, t\right) \in \Delta_{3}, \quad a.s.\ \omega \in \Omega.
	\end{equation*}
	Then there exists unique $\mathcal{A} \in L_{m}\hat{C}^{\alpha}\mathcal{H}_{\gamma}$ with $\mathcal{A}_{0}=0$ such that 
	\begin{equation*}
		\lim_{\pi \in \mathcal{P}\left[s, t\right], \left|\pi\right| \rightarrow 0}\left|\hat{\delta}\mathcal{A}_{s, t}-\sum_{\left[r, v\right] \in \pi}S_{t-v}A_{r, v}\right|_{\gamma}=0, \quad \forall \left(s, t\right) \in \Delta_{2}, \quad a.s.\ \omega \in \Omega.
	\end{equation*}
	Moreover, for every $\theta \in \left[0, 1+\varepsilon\right)$ we have 
	\begin{equation*}
		\left|\hat{\delta}\mathcal{A}_{s, t}-A_{s, t}\right|_{\gamma+\theta} \lesssim K\left|t-s\right|^{1+\varepsilon-\theta}, \quad \forall \left(s, t\right) \in \Delta_{2}, \quad a.s.\ \omega \in \Omega.
	\end{equation*}
\end{lemma}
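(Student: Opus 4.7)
The plan is to run the dyadic mild-sewing construction that underlies Lemma \ref{msl} pathwise, with $L^m$-norms replaced by the Hilbert norm $|\cdot|_\gamma$ and all exceptional sets gathered onto one full-measure event. Fixing $(s,t)\in\Delta_2$, let $\pi^n$ denote the uniform dyadic partition of $[s,t]$ and set $\mathcal{A}^n_{s,t}(\omega):=\sum_{[r,v]\in\pi^n}S_{t-v}A_{r,v}(\omega)$. The familiar telescoping identity yields
\begin{equation*}
	\mathcal{A}^{n+1}_{s,t}-\mathcal{A}^n_{s,t}=-\sum_{[r,v]\in\pi^n}S_{t-u}\Lambda_{r,u,v},
\end{equation*}
with $u$ the midpoint of $[r,v]$. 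Combining the hypothesis $|\Lambda_{r,u,v}|_\gamma\le K(v-r)(v-u)^\varepsilon$ with the smoothing bound of Proposition \ref{asp}(i) and summing the contributions against the midpoints (bounding the resulting sum by a Riemann-integral comparison against $(t-\cdot)^{-\theta}$) yields, for every $\theta\in[0,1+\varepsilon)$, a Cauchy estimate in $\mathcal{H}_{\gamma+\theta}$ with geometric decay in $n$.

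Second, I would define $\hat{\delta}\mathcal{A}_{s,t}(\omega):=\lim_n\mathcal{A}^n_{s,t}(\omega)$ first on the countable set of pairs $(s,t)$ with dyadic rational endpoints in $[0,T]$, where the a.s.\ estimates collapse onto a single null set, and then extend to all of $\Delta_2$ via a refinement argument that simultaneously delivers the Riemann-sum characterization. Setting $\mathcal{A}_t:=\lim_n\mathcal{A}^n_{0,t}$ and $\mathcal{A}_0:=0$, a direct comparison of Riemann sums on $[0,s]$ and $[s,t]$ gives $\hat{\delta}\mathcal{A}_{s,t}=\mathcal{A}_t-S_{t-s}\mathcal{A}_s$, so the two-variable object is genuinely a mild increment. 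Passing the Cauchy estimate to the limit produces the approximation bound $|\hat{\delta}\mathcal{A}_{s,t}-A_{s,t}|_{\gamma+\theta}\lesssim K(t-s)^{1+\varepsilon-\theta}$. Specialising to $\theta=0$ and combining with $A\in L_mC_2^\alpha\mathcal{H}_\gamma$ yields the pathwise H\"older bound $|\hat{\delta}\mathcal{A}|_{\alpha,\gamma}\lesssim|A|_{\alpha,\gamma}+KT^{1+\varepsilon-\alpha}$, whose $L^m$-integrability places $\mathcal{A}$ in $L_m\hat{C}^\alpha\mathcal{H}_\gamma$. Adaptedness follows because each term $S_{t-v}A_{r,v}$ in the Riemann sum is $\mathcal{F}_v\subset\mathcal{F}_t$-measurable; uniqueness is immediate from the Riemann-sum characterization.

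The main obstacle, as is typical for pathwise extensions of $L^m$-sewing, is coordinating the exceptional sets so that every $\omega$-wise limit holds on a single full-measure event independent of $(s,r,t)$. This is resolved by performing the dyadic construction first on the countable rational grid and extending continuously afterwards, the required H\"older modulus being supplied by the approximation estimate itself — a mildly circular-looking bootstrap that is nevertheless standard in this setting.
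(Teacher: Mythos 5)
Your proposal is, in substance, the argument the paper has in mind: the paper offers no proof of this lemma at all, deferring to \cite{GH19}*{Theorem 2.4}, and that theorem is proved by exactly the dyadic telescoping you set up (your identity $\mathcal{A}^{n+1}_{s,t}-\mathcal{A}^{n}_{s,t}=-\sum_{[r,v]\in\pi^{n}}S_{t-u}\Lambda_{r,u,v}$ is the right one, since $\hat{\delta}A_{r,u,v}=S_{v-u}\Lambda_{r,u,v}$ and $S_{t-v}S_{v-u}=S_{t-u}$). Two remarks. First, the obstacle to which you devote your final paragraph is not actually present: the hypotheses provide a \emph{single} full-measure event on which the coherence bound holds for \emph{all} $(s,r,t)\in\Delta_{3}$ and on which $A(\omega)\in C\left(\Delta_{2},\mathcal{H}_{\gamma}\right)$ with $\left|A(\omega)\right|_{\alpha,\gamma}<\infty$. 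So the cleanest route, and evidently the intended one, is to fix $\omega$ in this event and invoke the deterministic mild sewing lemma pathwise; adaptedness then follows because $\hat{\delta}\mathcal{A}_{s,t}$ is an a.s.\ limit of $\mathcal{F}_{t}$-measurable Riemann sums (the filtration satisfies the usual conditions), and membership in $L_{m}\hat{C}^{\alpha}\mathcal{H}_{\gamma}$ follows from the pathwise bound $\left|\hat{\delta}\mathcal{A}(\omega)\right|_{\alpha,\gamma}\lesssim\left|A(\omega)\right|_{\alpha,\gamma}+K(\omega)T^{1+\varepsilon-\alpha}$ with a deterministic implied constant. Your dyadic-rational grid and null-set coordination would only be needed if the bound on $\Lambda$ were assumed triple-by-triple, which is not how the statement is phrased. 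Second, a genuine (though easily repaired) imprecision: for $\theta\in\left[1,1+\varepsilon\right)$, a range the lemma must cover, the ``Riemann-integral comparison against $(t-\cdot)^{-\theta}$'' fails, because $\int_{s}^{t}(t-x)^{-\theta}dx=\infty$. Instead, writing $h=2^{-n}(t-s)$ and noting that the midpoints satisfy $t-u=\left(k-\tfrac{1}{2}\right)h$ for $k=1,\dots,2^{n}$, the increment is bounded by a constant times $Kh^{1+\varepsilon-\theta}\sum_{k\geq1}\left(k-\tfrac{1}{2}\right)^{-\theta}$, which is convergent for $\theta>1$ and only logarithmically divergent at $\theta=1$, where the extra factor $h^{\varepsilon}$ still yields summability in $n$. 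With these two adjustments your construction is complete and coincides with the proof behind the paper's citation.
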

Similar to Proposition \ref{yci}, we have the following result 
on pathwise Young convolution integrals. 
\begin{proposition} \label{ycip}
	Let $\eta \in C^{\alpha}\left(\left[0, T\right], \mathbb{R}^{e}\right)$   
	and $Y \in L_{m}E^{\beta}\mathcal{H}_{\gamma}^{e}$ for some $\beta \in \left(1-\alpha, \alpha\right)$. 
	Then there exists unique 
	\begin{equation*}
		Z:=\int_{0}^{\cdot}S_{\cdot-r}Y_{r}d\eta_{r} \in L_{m}\hat{C}^{\alpha}\mathcal{H}_{\gamma-\beta} 
	\end{equation*}
	with $Z_{0}=0$ such that for a.s $\omega \in \Omega$, 
	\begin{equation} \label{ycidp}
		\hat{\delta}Z_{s, t}=\int_{s}^{t}S_{t-r}Y_{r}d\eta_{r}=\lim_{\pi \in \mathcal{P}\left[s, t\right], \left|\pi\right| \rightarrow 0}\sum_{\left[r, v\right] \in \pi}S_{t-r}Y_{r}\delta \eta_{r, v}
	\end{equation}
	holds in $\mathcal{H}_{\gamma-\beta}$ for every $\left(s, t\right) \in \Delta_{2}$. 
	Moreover, $Z \in L_{m}\hat{C}^{\alpha-\theta}\mathcal{H}_{\gamma+\theta}$ for every $\theta \in \left[0, \alpha\right)$ and we have 
	\begin{equation*}
		\left\|\hat{\delta}Z\right\|_{m, \alpha-\theta, \gamma+\theta} \lesssim \left\|Y\right\|_{L_{m}E^{\beta}\mathcal{H}_{\gamma}}\left|\delta \eta\right|_{\alpha}.
	\end{equation*}
	As a consequence, the convolution map 
	\begin{equation*}
		C^{\alpha}\left(\left[0, T\right], \mathbb{R}^{e}\right) \times L_{m}E^{\beta}\mathcal{H}_{\gamma}^{e} \rightarrow L_{m}E^{\beta}\mathcal{H}_{\gamma+\theta}: \quad \left(\eta, Y\right) \mapsto Z
	\end{equation*}
	is a bounded bilinear map and we have 
	\begin{equation*}
		\left\|Z\right\|_{L_{m}E^{\beta}\mathcal{H}_{\gamma+\theta}} \lesssim T^{\alpha-\left(\beta \vee \theta\right)}\left\|Y\right\|_{L_{m}E^{\beta}\mathcal{H}_{\gamma}}\left|\delta \eta\right|_{\alpha}.
	\end{equation*}
\end{proposition}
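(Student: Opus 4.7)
The plan is to mirror the proof of Proposition \ref{yci}, but replace every $L^m(\Omega)$-norm estimate by a pathwise bound with an $L^m$-integrable random constant, and invoke the pathwise mild sewing lemma rather than its $L^m$-counterpart. For a fixed $\eta$ and $Y \in L_m E^\beta \mathcal{H}_\gamma^e$, I set
\begin{equation*}
A_{s,t} := S_{t-s} Y_s \, \delta\eta_{s,t}, \qquad (s,t) \in \Delta_2,
\end{equation*}
so that the candidate increments $\sum_{[r,v]\in\pi} S_{t-v} A_{r,v}$ in \eqref{ycidp} match the local approximation in the sewing lemma. Using Proposition \ref{nep}, I may assume $Y \in L_m\hat{C}^\beta \mathcal{H}_{\gamma-\beta}^e \cap L_m C\mathcal{H}_\gamma^e$.

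First I would verify the sewing hypotheses pathwise. By Proposition \ref{asp}(i),
\begin{equation*}
|A_{s,t}|_{\gamma-\beta} \lesssim |Y_s|_{\gamma-\beta}\,|\delta\eta_{s,t}| \lesssim |Y|_{0,\gamma-\beta}\,|\delta\eta|_\alpha\,|t-s|^\alpha,
\end{equation*}
which gives $A \in L_m C_2^\alpha \mathcal{H}_{\gamma-\beta}$ after noting the right-hand side is a random constant times $|t-s|^\alpha$, and the random constant is $L^m$-integrable since $Y \in L_m C\mathcal{H}_{\gamma}^e \hookrightarrow L_m C\mathcal{H}_{\gamma-\beta}^e$. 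Next, a short calculation gives $\hat\delta A_{s,r,t} = -S_{t-r}\hat\delta Y_{s,r}\,\delta\eta_{r,t}$, and Proposition \ref{asp} together with the $L_m\hat C^\beta$-bound yields
\begin{equation*}
|\hat\delta Y_{s,r}\,\delta\eta_{r,t}|_{\gamma-\beta} \lesssim |\hat\delta Y|_{\beta,\gamma-\beta}\,|\delta\eta|_\alpha\,|t-s|\,|t-r|^{\alpha+\beta-1},
\end{equation*}
where the multiplicative constant $K := C\,|\hat\delta Y|_{\beta,\gamma-\beta}\,|\delta\eta|_\alpha$ is $L^m$-integrable and $\varepsilon := \alpha+\beta-1 > 0$ by the hypothesis $\beta > 1-\alpha$.

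The pathwise mild sewing lemma then produces a unique $Z \in L_m \hat C^\alpha \mathcal{H}_{\gamma-\beta}$ with $Z_0 = 0$ satisfying \eqref{ycidp} and
\begin{equation*}
|\hat\delta Z_{s,t} - S_{t-s} Y_s\,\delta\eta_{s,t}|_{\gamma+\theta} \lesssim K\,|t-s|^{\alpha-\theta}, \qquad \theta \in [0,\alpha).
\end{equation*}
Combining this with the direct pathwise estimate $|S_{t-s}Y_s\,\delta\eta_{s,t}|_{\gamma+\theta} \lesssim |t-s|^{-\theta}|Y_s|_\gamma|\delta\eta_{s,t}|$ coming from Proposition \ref{asp}(i) gives the claimed bound on $\|\hat\delta Z\|_{m,\alpha-\theta,\gamma+\theta}$, exactly as in the proof of Proposition \ref{yci}. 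The consequence for $\|Z\|_{L_m E^\beta \mathcal{H}_{\gamma+\theta}}$ follows because $Z_0 = 0$ forces $\|Z\|_{m,0,\gamma+\theta} \lesssim T^{\alpha-\theta}\|\hat\delta Z\|_{m,\alpha-\theta,\gamma+\theta}$, and one controls $\|\hat\delta Z\|_{m,\beta,\gamma+\theta-\beta}$ by interpolating against $\|\hat\delta Z\|_{m,\alpha-(\theta-\beta)^+,\gamma+(\theta-\beta)^+}$; the final step is Proposition \ref{nep}.

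The main obstacle is psychological rather than technical: one must be careful that every constant arising from Propositions \ref{asp} and \ref{nep} is deterministic, so that the random part of every bound is a single $L^m$-integrable variable built from $|\hat\delta Y|_{\beta,\gamma-\beta}$, $|Y|_{0,\gamma}$, and $|\delta\eta|_\alpha$. Once this bookkeeping is clean, the pathwise mild sewing lemma delivers both the almost-sure convergence in \eqref{ycidp} and the required Hölder bound in $L^m(\Omega)$, and bilinearity/boundedness of the convolution map is immediate from the linearity in $Y$ and $\eta$ of each Riemann-type sum.
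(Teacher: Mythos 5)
Your proposal is correct and takes essentially the same approach the paper intends: the paper gives no separate proof of Proposition \ref{ycip}, stating only that it is ``similar to Proposition \ref{yci}'', and your argument is precisely that adaptation --- the same germ $A_{s,t}=S_{t-s}Y_{s}\delta\eta_{s,t}$, the same two estimates carried out pathwise with an $L^{m}$-integrable random constant $K$ built from $\left|\hat{\delta}Y\right|_{\beta,\gamma-\beta}$, $\left|Y\right|_{0,\gamma}$ and $\left|\delta\eta\right|_{\alpha}$, the pathwise mild sewing lemma in place of Lemma \ref{msl}, and Proposition \ref{nep} in place of Proposition \ref{ne}. Your closing remark that the constants from Propositions \ref{asp} and \ref{nep} are deterministic is exactly the bookkeeping point that makes the transfer legitimate.
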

\begin{remark}
	For $Y \in L_{m}E^{\beta}\mathcal{H}_{\gamma}^{e}$ with 
	$\beta \in \left(1-\alpha, \alpha\right)$, since 
	$L_{m}E^{\beta}\mathcal{H}_{\gamma}^{e} \subset E^{\beta}L_{m}\mathcal{H}_{\gamma}^{e}$ 
	we can define the Young convolution integral of $Y$ against $\eta$ by 
	either Proposition \ref{yci} or \ref{ycip}. 
	In view of \eqref{ycid} and \eqref{ycidp}, these two definitions are compatible and thus 
	we can use the same notation $\int_{0}^{\cdot}S_{\cdot-r}Y_{r}d\eta_{r}$. 
\end{remark}

\section{Mild solutions in $E^{\beta}L_{m}\mathcal{H}_{\gamma}$}
\label{Sec4}
In this and the next section, we fix $\gamma \in \mathbb{R}$, 
$\lambda \in \left[0, 1\right)$, $\mu \in \left[0, \frac{1}{2}\right)$ and 
$\nu \in \left[0, \alpha\right)$. 
Consider the following semilinear SPDE with a linear Young drift
\begin{equation} \label{yspde}
	\left\{
		\begin{aligned}
			&du_{t}=\left[Lu_{t}+f\left(t, u_{t}\right)\right]dt+\left(G_{t}u_{t}+g_{t}\right)d\eta_{t}+h\left(t, u_{t}\right)dW_{t}, \quad t \in \left(0, T\right],\\
			&u_{0}=\xi.
		\end{aligned}
	\right.	
\end{equation}
Here, $\eta \in C^{\alpha}\left(\left[0, T\right], \mathbb{R}^{e}\right)$, 
$\xi$ is an $\mathcal{H}_{\gamma}$-valued random variable, 
$f: \left[0, T\right] \times \Omega \times \mathcal{H}_{\gamma} \rightarrow \mathcal{H}_{\gamma-\lambda}$ 
and $h: \left[0, T\right] \times \Omega \times \mathcal{H}_{\gamma} \rightarrow \mathcal{H}_{\gamma-\mu}^{d}$ 
are progressively measurable vector fields, 
$G: \left[0, T\right] \times \Omega \rightarrow \mathcal{L}\left(\mathcal{H}_{\gamma}, \mathcal{H}_{\gamma-\nu}^{e}\right)$ 
and $g: \left[0, T\right] \times \Omega \rightarrow \mathcal{H}_{\gamma-\nu}^{e}$ 
are measurable adapted processes. 
Given $\beta \in \left(1-\alpha, \frac{1}{2}\right)$ 
and $m \in \left[2, \infty\right)$, 
we introduce the following definition of mild solutions in $E^{\beta}L_{m}\mathcal{H}_{\gamma}$. 
\begin{definition} \label{msd}
	We call $u \in E^{\beta}L_{m}\mathcal{H}_{\gamma}$ a mild solution of \eqref{yspde} if 
	$Gu+g \in E^{\beta}L_{m}\mathcal{H}_{\gamma-\nu}^{e}$ and 
	for every $t \in \left[0, T\right]$ and a.s. $\omega \in \Omega$, 
	\begin{equation*}
		u_{t}=S_{t}\xi+\int_{0}^{t}S_{t-r}f\left(r, u_{r}\right)dr+\int_{0}^{t}S_{t-r}\left(G_{r}u_{r}+g_{r}\right)d\eta_{r}+\int_{0}^{t}S_{t-r}h\left(r, u_{r}\right)dW_{r}
	\end{equation*}
	holds in $\mathcal{H}_{\gamma}$. 
\end{definition}
Then we introduce the following assumption. 
\begin{assumption} \label{as}
	~
	\begin{enumerate}[(i)]
		\item $\xi \in L^{m}\left(\Omega, \mathcal{F}_{0}, \mathcal{H}_{\gamma}\right)$; 
		\item $\left\|f\left(\cdot, 0\right)\right\|_{0, m, \gamma-\lambda} < \infty$ and 
		\begin{equation*}
			\left|f\left(t, u\right)-f\left(t, \bar{u}\right)\right|_{\gamma-\lambda} \lesssim \left|u-\bar{u}\right|_{\gamma}, \quad \forall t \in \left[0, T\right], \quad \forall u, \bar{u} \in \mathcal{H}_{\gamma};
		\end{equation*}
		\item $G \in E^{\beta}L_{\infty}\mathcal{L}_{\gamma, \gamma-\nu}\left(\mathcal{H}, \mathcal{H}^{e}\right)$ and $g \in E^{\beta}L_{m}\mathcal{H}_{\gamma-\nu}^{e}$; 
		\item $\left\|h\left(\cdot, 0\right)\right\|_{0, m, \gamma-\mu} < \infty$ and  
		\begin{equation*}
			\left|h\left(t, u\right)-h\left(t, \bar{u}\right)\right|_{\gamma-\mu} \lesssim \left|u-\bar{u}\right|_{\gamma}, \quad \forall t \in \left[0, T\right], \quad \forall u, \bar{u} \in \mathcal{H}_{\gamma}.
		\end{equation*}
	\end{enumerate}
\end{assumption}
\subsection{Existence and uniqueness}
We first give the following result on compositions. 
\begin{proposition} \label{lc}
	Let $u \in E^{\beta}L_{m}\mathcal{H}_{\gamma}$ 
	and $G \in E^{\beta}L_{\infty}\mathcal{L}_{\gamma, \gamma-\nu}\left(\mathcal{H}, \mathcal{H}^{e}\right)$. 
	Then $Gu \in E^{\beta}L_{m}\mathcal{H}_{\gamma-\nu}^{e}$ and we have 
	\begin{equation} \label{lce}
		\left\|Gu\right\|_{E^{\beta}L_{m}\mathcal{H}_{\gamma-\nu}} \lesssim \left\|G\right\|_{E^{\beta}L_{\infty}\mathcal{L}_{\gamma, \gamma-\nu}}\left\|u\right\|_{E^{\beta}L_{m}\mathcal{H}_{\gamma}}.
	\end{equation}
\end{proposition}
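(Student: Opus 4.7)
The plan is to bound each of the two constituent pieces of $\|Gu\|_{E^\beta L_m \mathcal{H}_{\gamma-\nu}} = \|\delta(Gu)\|_{\beta, m, \gamma-\nu-\beta} + \|Gu\|_{0, m, \gamma-\nu}$ separately using only operator-norm decompositions, and then verify the membership $Gu \in CL_m\mathcal{H}_{\gamma-\nu}^e$ to conclude.

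For the pointwise piece, the bound $|G_t u_t|_{\gamma-\nu} \le |G_t|_{(\gamma,\gamma-\nu)\text{-}op} |u_t|_\gamma$, combined with the $L^\infty$-bound on $|G_t|_{(\gamma,\gamma-\nu)\text{-}op}$ built into $\|G\|_{E^\beta L_\infty \mathcal{L}_{\gamma,\gamma-\nu}}$, gives after taking $L^m$-norms and $\sup_t$ the estimate $\|Gu\|_{0,m,\gamma-\nu} \le \|G\|_{E^\beta L_\infty \mathcal{L}_{\gamma,\gamma-\nu}} \|u\|_{0,m,\gamma}$. For the Hölder piece, I would use the algebraic identity
\begin{equation*}
\delta(Gu)_{s,t} = (\delta G_{s,t})\, u_t + G_s\,(\delta u_{s,t}),
\end{equation*}
and measure both summands in the $(\gamma-\beta,\gamma-\nu-\beta)$-operator norm. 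The first summand is bounded by $|\delta G_{s,t}|_{(\gamma-\beta,\gamma-\nu-\beta)\text{-}op}|u_t|_{\gamma-\beta}$, and the continuous embedding $\mathcal{H}_\gamma \hookrightarrow \mathcal{H}_{\gamma-\beta}$ converts this into a $|t-s|^\beta \|G\|_{E^\beta L_\infty \mathcal{L}_{\gamma,\gamma-\nu}}\|u\|_{0,m,\gamma}$ control; the second is bounded by $|G_s|_{(\gamma-\beta,\gamma-\nu-\beta)\text{-}op}|\delta u_{s,t}|_{\gamma-\beta}$, yielding $|t-s|^\beta \|G\|_{E^\beta L_\infty \mathcal{L}_{\gamma,\gamma-\nu}}\|\delta u\|_{\beta,m,\gamma-\beta}$. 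Summing, taking $L^m$-norms and dividing by $|t-s|^\beta$ produces the required Hölder estimate.

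The step I expect to be the most delicate is verifying $Gu \in CL_m\mathcal{H}_{\gamma-\nu}^e$, because the definition of $E^\beta L_\infty \mathcal{L}_{\gamma,\gamma-\nu}$ controls $\delta G$ in the Hölder sense only in the \emph{weaker} $(\gamma-\beta,\gamma-\nu-\beta)$-operator norm, whereas continuity is required in the stronger $\mathcal{H}_{\gamma-\nu}^e$-norm. The above Hölder estimate already yields continuity of $t \mapsto G_t u_t$ into $L^m(\Omega, \mathcal{H}_{\gamma-\nu-\beta}^e)$. To upgrade to $L^m(\Omega, \mathcal{H}_{\gamma-\nu}^e)$, I would split $G_t u_t - G_s u_s = G_t\,(\delta u_{s,t}) + (\delta G_{s,t})\, u_s$, handle the first term using the uniform $(\gamma,\gamma-\nu)$-op bound on $G_t$ together with the continuity of $u$ in $CL_m\mathcal{H}_\gamma$, and handle the second via an approximation of $u_s$ in $L^m(\Omega, \mathcal{H}_\gamma)$ by simple random variables with finite-dimensional range, on which all the operator topologies on $\delta G_{s,t}$ are comparable; a dominated convergence argument then transfers $(\delta G_{s,t})u_s \to 0$ from the weaker norm to $\mathcal{H}_{\gamma-\nu}^e$.
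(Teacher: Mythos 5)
Your two estimates are exactly the paper's proof of this proposition: the same splitting $\delta\left(Gu\right)_{s,t}=\left(\delta G_{s,t}\right)u_{t}+G_{s}\left(\delta u_{s,t}\right)$, with the increment measured in the $\left(\gamma-\beta,\gamma-\nu-\beta\right)$-operator norm and the pointwise term in the $\left(\gamma,\gamma-\nu\right)$-operator norm. That part is correct, and it is all the paper itself records before concluding \eqref{lce}.

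The gap is in your final step, the upgrade to $Gu\in CL_{m}\mathcal{H}_{\gamma-\nu}^{e}$. The claim that on simple random variables with finite-dimensional range ``all the operator topologies on $\delta G_{s,t}$ are comparable'' is false: the inequivalence of the two operator norms is caused by the \emph{target} norms $\left|\cdot\right|_{\gamma-\nu}$ versus $\left|\cdot\right|_{\gamma-\nu-\beta}$, not by the domain, so fixing the input vector does not help. Concretely, with $\mathcal{H}_{\sigma}=H^{2\sigma}\left(\mathbb{T}^{n},\mathbb{R}\right)$ as in Section \ref{Sec6}, consider rank-one operators $Av:=\langle v,\phi\rangle w_{k}$, where $\phi$ is a fixed functional bounded on $\mathcal{H}_{\gamma-\beta}$ and $w_{k}$ is a Fourier mode normalized so that $\left|w_{k}\right|_{\gamma-\nu}=1$, hence $\left|w_{k}\right|_{\gamma-\nu-\beta}\rightarrow 0$ as $k\rightarrow\infty$. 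Then $\left|A\right|_{\left(\gamma-\beta,\gamma-\nu-\beta\right)\text{-}op}\rightarrow 0$ while $\left|A\right|_{\left(\gamma,\gamma-\nu\right)\text{-}op}$ stays bounded, yet for a fixed $v$ with $\langle v,\phi\rangle=1$ one has $\left|Av\right|_{\gamma-\nu}=1$. So smallness of $\delta G_{s,t}$ in the weak operator norm plus boundedness in the strong one yields no decay of $\left|\left(\delta G_{s,t}\right)u_{s}\right|_{\gamma-\nu}$, even for a deterministic constant $u$ (a one-dimensional domain), and neither finite-dimensional approximation nor dominated convergence can repair this. Worse, by placing such rank-one operators on a sequence of time-bumps accumulating at $t=0$, with widths and mode indices tuned so that the $\beta$-Hölder bound in the weak norm holds, one obtains a deterministic $G$ with $\left\|G\right\|_{E^{\beta}L_{\infty}\mathcal{L}_{\gamma,\gamma-\nu}}<\infty$ for which $t\mapsto G_{t}v$ is genuinely discontinuous in $\mathcal{H}_{\gamma-\nu}$. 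Hence, if membership in $E^{\beta}L_{\infty}\mathcal{L}_{\gamma,\gamma-\nu}\left(\mathcal{H},\mathcal{H}^{e}\right)$ is read as requiring only finiteness of that norm, the conclusion $Gu\in CL_{m}\mathcal{H}_{\gamma-\nu}^{e}$ is not provable at all. The way out is definitional, not analytical: the paper introduces $E^{\beta}L_{\infty}\mathcal{L}_{\gamma,\gamma-\nu}$ ``similarly'' to $E^{\beta}L_{m}\mathcal{H}_{\gamma}^{e}$, which should be read as carrying the analogous continuity-in-time requirement, i.e.\ $t\mapsto G_{t}$ continuous into $L^{\infty}\left(\Omega,\mathcal{L}\left(\mathcal{H}_{\gamma},\mathcal{H}_{\gamma-\nu}^{e}\right)\right)$. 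With that reading, the strong-norm continuity of $Gu$ follows in one line from your own splitting $G_{t}u_{t}-G_{s}u_{s}=G_{t}\left(\delta u_{s,t}\right)+\left(\delta G_{s,t}\right)u_{s}$, estimating both terms directly in the $\left(\gamma,\gamma-\nu\right)$-operator norm; your approximation argument should be replaced by this observation. (For reference, the paper's own proof passes over the continuity requirement in silence.)
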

\begin{proof}
	Clearly, we have 
	\begin{equation*}
		\left\|G_{t}u_{t}\right\|_{m, \gamma-\nu} \lesssim \left\|G_{t}\right\|_{\infty, \left(\gamma, \gamma-\nu\right)\text{-}op}\left\|u_{t}\right\|_{m, \gamma} \lesssim \left\|G\right\|_{E^{\beta}L_{\infty}\mathcal{L}_{\gamma, \gamma-\nu}}\left\|u\right\|_{E^{\beta}L_{m}\mathcal{H}_{\gamma}}, \quad \forall t \in \left[0, T\right],
	\end{equation*}
	\begin{align*}
		\left\|G_{t}u_{t}-G_{s}u_{s}\right\|_{m, \gamma-\beta-\nu} &\leq \left\|\delta G_{s, t}u_{t}\right\|_{m, \gamma-\beta-\nu}+\left\|G_{s} \delta u_{s, t}\right\|_{m, \gamma-\beta-\nu}\\
		&\lesssim \left\|\delta G_{s, t}\right\|_{\infty, \left(\gamma-\beta, \gamma-\beta-\nu\right)\text{-}op}\left\|u_{t}\right\|_{m, \gamma-\beta}+\left\|G_{s}\right\|_{\infty, \left(\gamma-\beta, \gamma-\beta-\nu\right)\text{-}op}\left\|\delta u_{s, t}\right\|_{m, \gamma-\beta}\\
		&\lesssim \left\|G\right\|_{E^{\beta}L_{\infty}\mathcal{L}_{\gamma, \gamma-\nu}}\left\|u\right\|_{E^{\beta}L_{m}\mathcal{H}_{\gamma}}\left|t-s\right|^{\beta}, \quad \forall \left(s, t\right) \in \Delta_{2}.
	\end{align*}
	Hence, $Gu \in E^{\beta}L_{m}\mathcal{H}_{\gamma-\nu}^{e}$ and the estimate \eqref{lce} holds. 
\end{proof}
We now give the existence and uniqueness of the mild solution of \eqref{yspde}. 
\begin{theorem} \label{seu}
	Under Assumption \ref{as}, the Young SPDE \eqref{yspde} 
	has a unique mild solution $u \in E^{\beta}L_{m}\mathcal{H}_{\gamma}$ and we have 
	\begin{equation} \label{se}
		\left\|u\right\|_{E^{\beta}L_{m}\mathcal{H}_{\gamma}} \lesssim \left\|\xi\right\|_{m, \gamma}+\left\|f\left(\cdot, 0\right)\right\|_{0, m, \gamma-\lambda}+\left\|h\left(\cdot, 0\right)\right\|_{0, m, \gamma-\mu}+\left\|g\right\|_{E^{\beta}L_{m}\mathcal{H}_{\gamma-\nu}},
	\end{equation}
	for a hidden prefactor depending only on $T, \left|\delta \eta\right|_{\alpha}$ and $\left\|G\right\|_{E^{\beta}L_{\infty}\mathcal{L}_{\gamma, \gamma-\nu}}$. 
\end{theorem}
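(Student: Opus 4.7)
The plan is to cast \eqref{yspde} as a fixed-point problem for the map $\Phi$ sending $u \in E^{\beta}L_{m}\mathcal{H}_{\gamma}[0, T_{0}]$ to
\begin{equation*}
    \Phi(u)_{t} := S_{t}\xi + \int_{0}^{t} S_{t-r} f(r, u_{r})\,dr + \int_{0}^{t} S_{t-r}(G_{r}u_{r}+g_{r})\,d\eta_{r} + \int_{0}^{t} S_{t-r} h(r, u_{r})\,dW_{r},
\end{equation*}
where $T_{0} \le T$ is to be chosen small enough, and to apply Banach's fixed-point theorem on the Banach space $E^{\beta}L_{m}\mathcal{H}_{\gamma}[0, T_{0}]$. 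Once the local solution on $[0, T_{0}]$ is obtained, I will iterate on the successive intervals $[T_{0}, 2T_{0}], [2T_{0}, 3T_{0}], \ldots$, using the previous endpoint as the new initial datum (whose norm in $L^{m}(\Omega, \mathcal{H}_{\gamma})$ is controlled by the local estimate), and concatenate to produce a global mild solution on $[0, T]$.

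The four terms will be estimated separately in the $E^{\beta}L_{m}\mathcal{H}_{\gamma}$-norm. For $S_{\cdot}\xi$, Proposition \ref{asp}(ii) yields $\|S_{t-s}(S_{s}\xi)-S_{s}\xi\|_{m,\gamma-\beta} \lesssim |t-s|^{\beta}\|\xi\|_{m,\gamma}$ and a uniform bound $\|S_{t}\xi\|_{m,\gamma} \lesssim \|\xi\|_{m,\gamma}$. For the drift, the Lipschitz hypothesis on $f$ and $\left\|f(\cdot,0)\right\|_{0, m, \gamma-\lambda}<\infty$ give $\|f(r, u_{r})\|_{m,\gamma-\lambda}\lesssim 1+\|u\|_{0, m, \gamma}$, so that Proposition \ref{asp}(i) together with $\lambda<1$ delivers standard Bochner convolution bounds, with a prefactor $T_{0}^{1-\lambda-\beta}$ (after choosing $T_0$ small enough and using that we can take $\beta$ arbitrarily close to $1-\alpha<1/2$ so that $1-\lambda-\beta>0$ by possibly adjusting $\lambda$; otherwise one can use $T_0^{1-\lambda}$ from the plain $\|\cdot\|_{0,m,\gamma}$ estimate and a separate Hölder bound). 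For the Young term, Proposition \ref{lc} yields $Gu+g \in E^{\beta}L_{m}\mathcal{H}_{\gamma-\nu}^{e}$ with norm controlled by $\|G\|_{E^{\beta}L_{\infty}\mathcal{L}_{\gamma,\gamma-\nu}}\|u\|_{E^{\beta}L_{m}\mathcal{H}_{\gamma}}+\|g\|_{E^{\beta}L_{m}\mathcal{H}_{\gamma-\nu}}$, and Proposition \ref{yci} applied with $\theta=\nu \in [0,\alpha)$ puts the integral back into $E^{\beta}L_{m}\mathcal{H}_{\gamma}$ with a prefactor $T_{0}^{\alpha-(\beta\vee\nu)}|\delta\eta|_{\alpha}$, crucially vanishing as $T_0 \downarrow 0$. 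For the stochastic convolution, I would invoke Burkholder-Davis-Gundy together with Proposition \ref{asp}(i) to bound, for $m \ge 2$ and $\mu<1/2$,
\begin{equation*}
    \Bigl\|\int_{0}^{t} S_{t-r} h(r, u_{r})\,dW_{r}\Bigr\|_{m,\gamma}^{2} \lesssim \int_{0}^{t} (t-r)^{-2\mu}\|h(r, u_{r})\|_{m,\gamma-\mu}^{2}\,dr,
\end{equation*}
and analogously split the $\delta$-increment into the forward piece $\int_{s}^{t}$ and the semigroup-difference piece $\int_{0}^{s}(S_{t-r}-S_{s-r})h(r, u_{r})\,dW_{r}$, where $(s,t)\in\Delta_{2}$, handling the latter by Proposition \ref{asp}(ii) to get the $|t-s|^{\beta}$ Hölder control in $\mathcal{H}_{\gamma-\beta}$; all bounds come with a prefactor of the form $T_{0}^{\epsilon}$ for some $\epsilon>0$ because $\mu<1/2$ and $\beta<1/2$.

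Collecting these bounds, I obtain
\begin{equation*}
    \|\Phi(u)-\Phi(\bar u)\|_{E^{\beta}L_{m}\mathcal{H}_{\gamma}[0,T_0]} \le C(T_0)\|u-\bar u\|_{E^{\beta}L_{m}\mathcal{H}_{\gamma}[0,T_0]}
\end{equation*}
with $C(T_0)$ depending on $|\delta\eta|_{\alpha}$, $\|G\|_{E^{\beta}L_{\infty}\mathcal{L}_{\gamma,\gamma-\nu}}$, and the Lipschitz constants of $f, h$, and with $C(T_0) \to 0$ as $T_0 \downarrow 0$. Choosing $T_0$ so that $C(T_0)\le 1/2$ gives a unique fixed point on $[0, T_0]$; the a priori bound \eqref{se} on $[0, T_0]$ follows from absorbing $\tfrac{1}{2}\|u\|$ to the left-hand side of a self-bound of $\Phi(u)$. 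Extending to $[0, T]$ by a finite number $N \sim T/T_0$ of such steps and iterating the a priori estimate yields the global bound with the hidden prefactor depending only on $T$, $|\delta\eta|_{\alpha}$ and $\|G\|_{E^{\beta}L_{\infty}\mathcal{L}_{\gamma,\gamma-\nu}}$.

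The main obstacle I anticipate is the stochastic convolution in the Hölder-in-$L^{m}$ norm: establishing $\|\hat\delta(\Phi_3 u)\|_{\beta, m, \gamma-\beta} < \infty$ with a factor of $T_0^{\epsilon}$ requires splitting the increment and verifying that $\beta<1/2$ together with $\mu<1/2$ allow the two-sided factors $(t-r)^{-2\mu}$ and $|t-s|^{2\beta}$ to combine into an integrable singularity with room to spare; a secondary delicate point is verifying the mapping property of the Young term when $\nu > \beta$, where the choice $\theta=\nu$ in Proposition \ref{yci} shifts the output Hölder exponent from $\alpha$ to $\alpha-\nu$, still strictly positive and compatible with the required $E^{\beta}$-regularity in the target after invoking Proposition \ref{ne}.
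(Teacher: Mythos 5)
Your proposal follows essentially the same skeleton as the paper's proof: the same fixed-point map $\Phi=S\xi+F+Z+H$ on $E^{\beta}L_{m}\mathcal{H}_{\gamma}$ over a small interval, Proposition \ref{lc} composed with Proposition \ref{yci} (with $\theta=\nu$) for the Young term with prefactor $T_{0}^{\alpha-\left(\beta \vee \nu\right)}\left|\delta \eta\right|_{\alpha}$, Minkowski/BDG bounds for the two convolutions, a contraction for small $T_{0}$, and concatenation over finitely many intervals using that $T_{0}$ is independent of the initial datum, with the global estimate obtained by induction. The one methodological deviation is that you estimate the plain increment $\delta$ of the convolution terms by splitting it into a forward piece $\int_{s}^{t}$ and a semigroup-difference piece $\int_{0}^{s}\left(S_{t-r}-S_{s-r}\right)\left(\cdots\right)$, whereas the paper estimates the mild increment $\hat{\delta}$, for which the convolution structure collapses the increment to a single forward integral (e.g. $\hat{\delta}H_{s, t}=\int_{s}^{t}S_{t-r}h\left(r, u_{r}\right)dW_{r}$, with no second piece), and then passes back to the $E^{\beta}$-norm through the equivalence of Proposition \ref{ne}. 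Your splitting does work --- the second piece is controlled by Proposition \ref{asp}(ii) followed by \ref{asp}(i), via $\left(S_{t-r}-S_{s-r}\right)=\left(S_{t-s}-I\right)S_{s-r}$, and yields an extra positive power of $T_{0}$ --- but it is exactly the bookkeeping that the $\hat{\delta}$/Proposition \ref{ne} machinery was designed to avoid; both routes land on the same exponents.

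There is, however, one step where your argument as written fails: the drift term. Your claimed prefactor $T_{0}^{1-\lambda-\beta}$ has a negative exponent whenever $\lambda+\beta \geq 1$ (e.g. $\lambda=3/4$, $\beta=2/5$, admissible under the standing assumptions), and neither of your proposed remedies is available: $\beta$ is fixed by the statement of the theorem (the solution is sought in $E^{\beta}L_{m}\mathcal{H}_{\gamma}$ for that given $\beta$) and $\lambda$ is fixed by Assumption \ref{as}(ii), so you may not ``take $\beta$ close to $1-\alpha$'' or ``adjust $\lambda$''. The correct repair --- and what the paper does --- is to exploit that the H\"older seminorm entering $\left\|\cdot\right\|_{E^{\beta}L_{m}\mathcal{H}_{\gamma}}$ is taken in the \emph{weaker} norm $\mathcal{H}_{\gamma-\beta}$: by Proposition \ref{asp}(i), $\left|S_{t-r}f\left(r, u_{r}\right)\right|_{\gamma-\beta} \lesssim \left(t-r\right)^{-\left(\lambda-\beta\right)^{+}}\left|f\left(r, u_{r}\right)\right|_{\gamma-\lambda}$, so $\left\|\hat{\delta}F_{s, t}\right\|_{m, \gamma-\beta} \lesssim \left|t-s\right|^{1-\left(\lambda-\beta\right)^{+}}$, and since $1-\left(\lambda-\beta\right)^{+}-\beta=1-\left(\lambda \vee \beta\right)>0$ always, one gets the prefactor $T^{1-\left(\beta \vee \lambda\right)}$ with no restriction on $\lambda+\beta$. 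Your parenthetical fallback (the sup bound $T_{0}^{1-\lambda}$ plus ``a separate H\"older bound'') would close the gap if that H\"older bound is carried out in $\mathcal{H}_{\gamma-\beta}$ exactly as above, for instance with the same splitting you use for the stochastic convolution; but as stated, the primary argument does not go through.
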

\begin{proof}
	Let $\varepsilon \in \left(0, 1\right]$ be a constant waiting to be determined. 
	We first show the existence and uniqueness for $T \leq \varepsilon$. 
	For any $u \in E^{\beta}L_{m}\mathcal{H}_{\gamma}$, by Proposition \ref{lc}, 
	$Y:=Gu+g \in E^{\beta}L_{m}\mathcal{H}_{\gamma-\nu}^{e}$. 
	Then by Proposition \ref{yci}, 
	$Z:=\int_{0}^{\cdot}S_{\cdot-r}Y_{r}d\eta_{r} \in E^{\beta}L_{m}\mathcal{H}_{\gamma}$ and 
	\begin{align*}
		\left\|Z\right\|_{E^{\beta}L_{m}\mathcal{H}_{\gamma}} &\lesssim T^{\alpha-\left(\beta \vee \nu\right)}\left\|Y\right\|_{E^{\beta}L_{m}\mathcal{H}_{\gamma-\nu}}\left|\delta \eta\right|_{\alpha}\\
		&\lesssim T^{\alpha-\left(\beta \vee \nu\right)}\left(\left\|G\right\|_{E^{\beta}L_{\infty}\mathcal{L}_{\gamma, \gamma-\nu}}\left\|u\right\|_{E^{\beta}L_{m}\mathcal{H}_{\gamma}}+\left\|g\right\|_{E^{\beta}L_{m}\mathcal{H}_{\gamma-\nu}}\right)\left|\delta \eta\right|_{\alpha}. 
	\end{align*} 
	Define 
	\begin{equation*}
		F:=\int_{0}^{\cdot}S_{\cdot-r}f\left(r, u_{r}\right)dr, \quad H:=\int_{0}^{\cdot}S_{\cdot-r}h\left(r, u_{r}\right)dW_{r}, \quad \Phi\left(u\right):=S\xi+F+Z+H.
	\end{equation*}
	Since $S$ is bounded and strongly continuous on $\mathcal{H}_{\gamma}$, 
	we have $S\xi \in L_{m}C\mathcal{H}_{\gamma}$ and 
	\begin{equation*}
		\left\|S\xi\right\|_{m, 0, \gamma} \lesssim \left\|\xi\right\|_{m, \gamma}. 
	\end{equation*}
	Combined with $\hat{\delta}S\xi=0$, by Proposition \ref{nep}, 
	$S\xi \in L_{m}E^{\beta}\mathcal{H}_{\gamma} \subset E^{\beta}L_{m}\mathcal{H}_{\gamma}$ and 
	\begin{equation} \label{Sxie}
		\left\|S\xi\right\|_{E^{\beta}L_{m}\mathcal{H}_{\gamma}} \lesssim \left\|S\xi\right\|_{L_{m}E^{\beta}\mathcal{H}_{\gamma}} \lesssim \left\|\xi\right\|_{m, \gamma}. 
	\end{equation}
	By Proposition \ref{asp}, applying Minkowski's Inequality, 
	\begin{align*}
		\left\|\hat{\delta}F_{s, t}\right\|_{m, \gamma-\beta} &\lesssim \left(\mathbb{E}\left(\int_{s}^{t}\left|S_{t-r}f\left(r, u_{r}\right)\right|_{\gamma-\beta}dr\right)^{m}\right)^{\frac{1}{m}} \lesssim \int_{s}^{t}\left|t-r\right|^{-\left(\lambda-\beta\right)^{+}}\left\|f\left(r, u_{r}\right)\right\|_{m, \gamma-\lambda}dr\\
		&\lesssim \left(\left\|u\right\|_{E^{\beta}L_{m}\mathcal{H}_{\gamma}}+\left\|f\left(\cdot, 0\right)\right\|_{0, m, \gamma-\lambda}\right)\left|t-s\right|^{1-\left(\lambda-\beta\right)^{+}}, \quad \forall \left(s, t\right) \in \Delta_{2},
	\end{align*}
	\begin{align*}
		\left\|\hat{\delta}F_{s, t}\right\|_{m, \gamma} &\lesssim \left(\mathbb{E}\left(\int_{s}^{t}\left|S_{t-r}f\left(r, u_{r}\right)\right|_{\gamma}dr\right)^{m}\right)^{\frac{1}{m}} \lesssim \int_{s}^{t}\left|t-r\right|^{-\lambda}\left\|f\left(r, u_{r}\right)\right\|_{m, \gamma-\lambda}dr\\
		&\lesssim \left(\left\|u\right\|_{E^{\beta}L_{m}\mathcal{H}_{\gamma}}+\left\|f\left(\cdot, 0\right)\right\|_{0, m, \gamma-\lambda}\right)\left|t-s\right|^{1-\lambda}, \quad \forall \left(s, t\right) \in \Delta_{2}.
	\end{align*}
	Then $F \in \hat{C}^{1-\left(\lambda-\beta\right)^{+}}L_{m}\mathcal{H}_{\gamma-\beta} \cap \hat{C}^{1-\lambda}L_{m}\mathcal{H}_{\gamma}$ and 
	\begin{equation} \label{Fe}
		\left\|\hat{\delta}F\right\|_{1-\left(\lambda-\beta\right)^{+}, m, \gamma-\beta}+\left\|\hat{\delta}F\right\|_{1-\lambda, m, \gamma} \lesssim \left\|u\right\|_{E^{\beta}L_{m}\mathcal{H}_{\gamma}}+\left\|f\left(\cdot, 0\right)\right\|_{0, m, \gamma-\lambda}.
	\end{equation}
	By Proposition \ref{ne}, $F \in E^{\beta}L_{m}\mathcal{H}_{\gamma}$ and 
	\begin{align*}
		\left\|F\right\|_{E^{\beta}L_{m}\mathcal{H}_{\gamma}} &\lesssim T^{1-\left(\beta \vee \lambda\right)}\left\|\hat{\delta}F\right\|_{1-\left(\lambda-\beta\right)^{+}, m, \gamma-\beta}+T^{1-\lambda}\left\|\hat{\delta}F\right\|_{1-\lambda, m, \gamma}\\
		&\lesssim T^{1-\left(\beta \vee \lambda\right)}\left(\left\|u\right\|_{E^{\beta}L_{m}\mathcal{H}_{\gamma}}+\left\|f\left(\cdot, 0\right)\right\|_{0, m, \gamma-\lambda}\right).
	\end{align*}
	Similarly, by Proposition \ref{asp}, applying Minkowski's Inequality and the BDG Inequality, 
	\begin{align*}
		\left\|\hat{\delta}H_{s, t}\right\|_{m, \gamma-\beta} &\lesssim \left(\mathbb{E}\left(\int_{s}^{t}\left|S_{t-r}h\left(r, u_{r}\right)\right|_{\gamma-\beta}^{2}dr\right)^{\frac{m}{2}}\right)^{\frac{1}{m}} \lesssim \left(\int_{s}^{t}\left|t-r\right|^{-2\left(\mu-\beta\right)^{+}}\left\|h\left(r, u_{r}\right)\right\|_{m, \gamma-\mu}^{2}dr\right)^{\frac{1}{2}}\\
		&\lesssim \left(\left\|u\right\|_{E^{\beta}L_{m}\mathcal{H}_{\gamma}}+\left\|h\left(\cdot, 0\right)\right\|_{0, m, \gamma-\mu}\right)\left|t-s\right|^{\frac{1}{2}-\left(\mu-\beta\right)^{+}}, \quad \forall \left(s, t\right) \in \Delta_{2},
	\end{align*}
	\begin{align*}
		\left\|\hat{\delta}H_{s, t}\right\|_{m, \gamma} &\lesssim \left(\mathbb{E}\left(\int_{s}^{t}\left|S_{t-r}h\left(r, u_{r}\right)\right|_{\gamma}^{2}dr\right)^{\frac{m}{2}}\right)^{\frac{1}{m}} \lesssim \left(\int_{s}^{t}\left|t-r\right|^{-2\mu}\left\|h\left(r, u_{r}\right)\right\|_{m, \gamma-\mu}^{2}dr\right)^{\frac{1}{2}}\\
		&\lesssim \left(\left\|u\right\|_{E^{\beta}L_{m}\mathcal{H}_{\gamma}}+\left\|h\left(\cdot, 0\right)\right\|_{0, m, \gamma-\mu}\right)\left|t-s\right|^{\frac{1}{2}-\mu}, \quad \forall \left(s, t\right) \in \Delta_{2}.
	\end{align*}
	Then $H \in \hat{C}^{\frac{1}{2}-\left(\mu-\beta\right)^{+}}L_{m}\mathcal{H}_{\gamma-\beta} \cap \hat{C}^{\frac{1}{2}-\mu}L_{m}\mathcal{H}_{\gamma}$ and 
	\begin{equation} \label{He}
		\left\|\hat{\delta}H\right\|_{\frac{1}{2}-\left(\mu-\beta\right)^{+}, m, \gamma-\beta}+\left\|\hat{\delta}H\right\|_{\frac{1}{2}-\mu, m, \gamma} \lesssim \left\|u\right\|_{E^{\beta}L_{m}\mathcal{H}_{\gamma}}+\left\|h\left(\cdot, 0\right)\right\|_{0, m, \gamma-\mu}.
	\end{equation}
	By Proposition \ref{ne}, $H \in E^{\beta}L_{m}\mathcal{H}_{\gamma}$ and 
	\begin{align*}
		\left\|H\right\|_{E^{\beta}L_{m}\mathcal{H}_{\gamma}} &\lesssim T^{\frac{1}{2}-\left(\beta \vee \mu\right)}\left\|\hat{\delta}H\right\|_{\frac{1}{2}-\left(\mu-\beta\right)^{+}, m, \gamma-\beta}+T^{\frac{1}{2}-\beta}\left\|\hat{\delta}H\right\|_{\frac{1}{2}-\mu, m, \gamma}\\
		&\lesssim T^{\frac{1}{2}-\left(\beta \vee \mu\right)}\left(\left\|u\right\|_{E^{\beta}L_{m}\mathcal{H}_{\gamma}}+\left\|h\left(\cdot, 0\right)\right\|_{0, m, \gamma-\mu}\right).
	\end{align*}
	Therefore, $\Phi\left(u\right) \in E^{\beta}L_{m}\mathcal{H}_{\gamma}$ and there exists $\sigma > 0$ depending only on $\beta, \lambda, \mu$ and $\nu$ such that 
	\begin{align}
		\left\|\Phi\left(u\right)\right\|_{E^{\beta}L_{m}\mathcal{H}_{\gamma}} &\lesssim T^{\sigma}\left(1+\left\|G\right\|_{E^{\beta}L_{\infty}\mathcal{L}_{\gamma, \gamma-\nu}}\left|\delta \eta\right|_{\alpha}\right)\left\|u\right\|_{E^{\beta}L_{m}\mathcal{H}_{\gamma}} \notag\\
		&\quad+\left\|\xi\right\|_{m, \gamma}+\left\|f\left(\cdot, 0\right)\right\|_{0, m, \gamma-\lambda}+\left\|h\left(\cdot, 0\right)\right\|_{0, m, \gamma-\mu}+\left\|g\right\|_{E^{\beta}L_{m}\mathcal{H}_{\gamma-\nu}}\left|\delta \eta\right|_{\alpha}. \label{phie}
	\end{align}
	For any other $\bar{u} \in E^{\beta}L_{m}\mathcal{H}_{\gamma}$, note that 
	\begin{align*}
		\Phi\left(u\right)-\Phi\left(\bar{u}\right)&=\int_{0}^{\cdot}S_{\cdot-r}\left[f\left(r, u_{r}\right)-f\left(r, \bar{u}_{r}\right)\right]dr+\int_{0}^{\cdot}S_{\cdot-r}G_{r}\left(u_{r}-\bar{u}_{r}\right)d\eta_{r}\\
		&\quad+\int_{0}^{\cdot}S_{\cdot-r}\left[h\left(r, u_{r}\right)-f\left(r, \bar{u}_{r}\right)\right]dW_{r}.
	\end{align*}
	Analogous to the above arguments, we have 
	\begin{equation*}
		\left\|\Phi\left(u\right)-\Phi\left(\bar{u}\right)\right\|_{E^{\beta}L_{m}\mathcal{H}_{\gamma}} \lesssim T^{\sigma}\left(1+\left\|G\right\|_{E^{\beta}L_{\infty}\mathcal{L}_{\gamma, \gamma-\nu}}\left|\delta \eta\right|_{\alpha}\right)\left\|u-\bar{u}\right\|_{E^{\beta}L_{m}\mathcal{H}_{\gamma}}. 
	\end{equation*}
	Hence, we can choose $\varepsilon \in \left(0, 1\right]$ such that 
	$\Phi$ is a $\frac{1}{2}$-contraction in $E^{\beta}L_{m}\mathcal{H}_{\gamma}$ for $T \leq \varepsilon$. 
	Applying the Banach Fixed-point Theorem, $\Phi$ has a unique fixed point $u$ in $E^{\beta}L_{m}\mathcal{H}_{\gamma}$, 
	which is the unique mild solution of \eqref{yspde}.\\
	\indent
	For arbitrary $T$, consider a partition $0=t_{0}<\cdots<t_{N}=T$ such that 
	$t_{i+1}-t_{i} \leq \varepsilon$ for $i=0, \cdots, N-1$. 
	Define $u_{0}=u_{t_{0}}^{0}:=\xi$ and then define $u^{i}$ recursively on 
	$\left(t_{i-1},t_{i}\right]$ for $i=1, \cdots, N$ by the mild solution 
	in $E^{\beta}L_{m}\mathcal{H}_{\gamma}\left[t_{i-1}, t_{i}\right]$ to the Young SPDE 
	\begin{equation*}
		\left\{
			\begin{aligned}
				&du_{t}^{i}=\left[Lu_{t}^{i}+f\left(t, u_{t}^{i}\right)\right]dt+\left(G_{t}u_{t}^{i}+g_{t}\right)d\eta_{t}+h\left(t, u_{t}^{i}\right)dW_{t}, \quad t \in \left(t_{i-1}, t_{i}\right],\\
				&u_{t_{i-1}}^{i}=u_{t_{i-1}}^{i-1}.
			\end{aligned}
		\right.	
	\end{equation*}
	Analogous to the above arguments but replacing $\xi$ by $u^{i-1}_{t_{i-1}}$, we can get 
	the existence and uniqueness of $u^{i}$, since $\varepsilon$ does not depend on $\xi$. 
	Define $u_{t}:=u_{t}^{t_{i}}$ for every 
	$t \in \left(t_{i-1},t_{i}\right]$ and $i=1, \cdots, N$. 
	Then $u \in E^{\beta}L_{m}\mathcal{H}_{\gamma}\left[0, T\right]$ is the unique mild solution of \eqref{yspde}.\\
	\indent
	At last, we show the estimate \eqref{se}. For $T \leq \varepsilon$, 
	the estimate \eqref{se} is implied by \eqref{phie}. 
	For arbitrary $T$, we similarly have 
	\begin{equation*}
		\left\|u^{i}\right\|_{E^{\beta}L_{m}\mathcal{H}_{\gamma}\left[t_{i-1}, t_{i}\right]} \lesssim \left\|u_{t_{i-1}}^{i-1}\right\|_{m, \gamma}+\left\|f\left(\cdot, 0\right)\right\|_{0, m, \gamma-\lambda}+\left\|h\left(\cdot, 0\right)\right\|_{0, m, \gamma-\mu}+\left\|g\right\|_{E^{\beta}L_{m}\mathcal{H}_{\gamma-\nu}}, 
	\end{equation*}
	for every $i=1, 2, \cdots, N$. By induction, we get the estimate \eqref{se}. 
\end{proof}
\subsection{Continuity of the mild solution map}
Next, we show the continuity of the mild solution map. 
\begin{theorem} \label{csm}
	Let Assumption \ref{as} hold and additionally 
	$\bar{\eta} \in C^{\alpha}\left(\left[0, T\right], \mathbb{R}^{e}\right)$, 
	$\bar{\xi} \in L^{m}\left(\Omega, \mathcal{F}_{0}, \mathcal{H}_{\gamma}\right)$. 
	Assume $\left|\delta \eta\right|_{\alpha}, \left|\delta \bar{\eta}\right|_{\alpha}, \left\|\xi\right\|_{m, \gamma}, \left\|\bar{\xi}\right\|_{m, \gamma} \leq R$ for some $R \geq 0$. 
	Let $u \in E^{\beta}L_{m}\mathcal{H}_{\gamma}$ be the mild solution of \eqref{yspde} and 
	$\bar{u} \in E^{\beta}L_{m}\mathcal{H}_{\gamma}$ be the mild solution of \eqref{yspde} with $\eta$ and $\xi$ 
	replaced by $\bar{\eta}$ and $\bar{\xi}$, respectively. 
	Then we have 
	\begin{equation} \label{smc}
		\left\|u-\bar{u}\right\|_{E^{\beta}L_{m}\mathcal{H}_{\gamma}} \lesssim \left|\delta \eta-\delta \bar{\eta}\right|_{\alpha}+\left\|\xi-\bar{\xi}\right\|_{m, \gamma}, 
	\end{equation}
	for a hidden prefactor depending only on $T, R, \left\|f\left(\cdot, 0\right)\right\|_{0, m, \gamma-\lambda}, \left\|h\left(\cdot, 0\right)\right\|_{0, m, \gamma-\mu}, \left\|G\right\|_{E^{\beta}L_{\infty}\mathcal{L}_{\gamma, \gamma-\nu}}$ and $\left\|g\right\|_{E^{\beta}L_{m}\mathcal{H}_{\gamma-\nu}}$. 
	As a consequence, the mild solution map 
	\begin{equation*}
		C^{\alpha}\left(\left[0, T\right], \mathbb{R}^{e}\right) \times L^{m}\left(\Omega, \mathcal{F}_{0}, \mathcal{H}_{\gamma}\right) \rightarrow E^{\beta}L_{m}\mathcal{H}_{\gamma}: \quad \left(\eta, \xi\right) \mapsto u
	\end{equation*}
	is locally Lipschitz continuous. 
\end{theorem}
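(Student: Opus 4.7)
The plan is to run exactly the same local fixed-point / iterate-over-partition scheme used for Theorem \ref{seu}, but applied to the difference $v:=u-\bar u$. Writing the two mild equations and subtracting, I get
\begin{align*}
v_{t}&=S_{t}(\xi-\bar\xi)+\int_{0}^{t}S_{t-r}[f(r,u_{r})-f(r,\bar u_{r})]\,dr+\int_{0}^{t}S_{t-r}[h(r,u_{r})-h(r,\bar u_{r})]\,dW_{r}\\
&\quad+\int_{0}^{t}S_{t-r}G_{r}v_{r}\,d\eta_{r}+\int_{0}^{t}S_{t-r}(G_{r}\bar u_{r}+g_{r})\,d(\eta-\bar\eta)_{r},
\end{align*}
where the last line is the crucial splitting that turns the Young drift into a piece linear in $v$ plus a piece measuring the difference of the drivers.

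For $T\le\varepsilon$ (with the same $\varepsilon$ chosen in Theorem \ref{seu}), I would estimate the $\|\cdot\|_{E^{\beta}L_{m}\mathcal H_{\gamma}}$-norm of each of these five terms term by term, reusing the exact bounds derived in the proof of Theorem \ref{seu}: Propositions \ref{asp} and \ref{ne} together with Minkowski's inequality control the drift term, the BDG inequality controls the stochastic integral, Proposition \ref{lc} and the bilinear estimate \eqref{ycie2} of Proposition \ref{yci} control the two Young convolution pieces, and the $S_{t}(\xi-\bar\xi)$ term is handled by \eqref{Sxie}. Using the Lipschitz assumptions on $f,h$ and Proposition \ref{lc}, this produces
\begin{equation*}
\|v\|_{E^{\beta}L_{m}\mathcal H_{\gamma}}\lesssim T^{\sigma}\bigl(1+\|G\|_{E^{\beta}L_{\infty}\mathcal L_{\gamma,\gamma-\nu}}|\delta\eta|_{\alpha}\bigr)\|v\|_{E^{\beta}L_{m}\mathcal H_{\gamma}}+\|\xi-\bar\xi\|_{m,\gamma}+C_{R}\,|\delta\eta-\delta\bar\eta|_{\alpha},
\end{equation*}
where the constant $C_{R}$ multiplying $|\delta\eta-\delta\bar\eta|_{\alpha}$ comes from $\|G\bar u+g\|_{E^{\beta}L_{m}\mathcal H_{\gamma-\nu}}$; this is bounded by applying the a priori estimate \eqref{se} of Theorem \ref{seu} to $\bar u$ together with $|\delta\bar\eta|_{\alpha},\|\bar\xi\|_{m,\gamma}\le R$, which is precisely the reason the prefactor is allowed to depend on $R$. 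Shrinking $\varepsilon$ if necessary so that the coefficient of $\|v\|_{E^{\beta}L_{m}\mathcal H_{\gamma}}$ on the right is $\le\tfrac{1}{2}$, I absorb it and obtain \eqref{smc} on $[0,\varepsilon]$.

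For arbitrary $T$, I would partition $0=t_{0}<\cdots<t_{N}=T$ with $t_{i}-t_{i-1}\le\varepsilon$ and repeat the argument on each subinterval $[t_{i-1},t_{i}]$ with initial data $u_{t_{i-1}}$ and $\bar u_{t_{i-1}}$, getting
\begin{equation*}
\|v\|_{E^{\beta}L_{m}\mathcal H_{\gamma}[t_{i-1},t_{i}]}\lesssim \|v_{t_{i-1}}\|_{m,\gamma}+|\delta\eta-\delta\bar\eta|_{\alpha}.
\end{equation*}
Since $\|v_{t_{i-1}}\|_{m,\gamma}\le\|v\|_{E^{\beta}L_{m}\mathcal H_{\gamma}[0,t_{i-1}]}$, a finite induction in $i$ concatenates these local bounds into the claimed global estimate \eqref{smc}; local Lipschitz continuity of the solution map is then immediate by restricting to any ball in $C^{\alpha}\times L^{m}(\Omega,\mathcal F_{0},\mathcal H_{\gamma})$. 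The main obstacle is the Young convolution: one has to make the splitting above, check that the $E^{\beta}L_{m}\mathcal H_{\gamma-\nu}$-norm of $G\bar u+g$ is controlled solely by data in the prefactor list (this is where the a priori bound \eqref{se} is used in an essential way), and verify that all powers of $T$ produced by the Young, parabolic, and BDG estimates are strictly positive so that the small-time contraction indeed closes.
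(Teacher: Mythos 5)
Your proposal is correct and follows essentially the same route as the paper: the same splitting of the Young term into $\int S G v\,d\eta + \int S(G\bar u+g)\,d(\eta-\bar\eta)$, the same use of the a priori bound \eqref{se} (your $C_{R}$ is the paper's constant $M$) to control $\|G\bar u+g\|_{E^{\beta}L_{m}\mathcal H_{\gamma-\nu}}$ by quantities in the admissible prefactor list, the same small-time absorption, and the same induction over a partition for general $T$.
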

\begin{proof}
	Recall the definition of $Y, Z, F$ and $H$ in the proof of Theorem \ref{seu}. 
	We similarly define $\bar{Y}, \bar{Z}, \bar{F}$ and $\bar{H}$. 
	By Theorem \ref{seu}, there exists $M \geq 0$ depending only on 
	$T, \left\|f\left(\cdot, 0\right)\right\|_{0, m, \gamma-\lambda}, \left\|h\left(\cdot, 0\right)\right\|_{0, m, \gamma-\mu}$, $\left\|G\right\|_{E^{\beta}L_{\infty}\mathcal{L}_{\gamma, \gamma-\nu}}, \left\|g\right\|_{E^{\beta}L_{m}\mathcal{H}_{\gamma-\nu}}$ and $R$, 
	such that $\left\|u\right\|_{E^{\beta}L_{m}\mathcal{H}_{\gamma}}, \left\|\bar{u}\right\|_{E^{\beta}L_{m}\mathcal{H}_{\gamma}} \leq M$. 
	Then by Propositions \ref{yci} and \ref{lc}, we have 
	\begin{align*}
		\left\|Z-\bar{Z}\right\|_{E^{\beta}L_{m}\mathcal{H}_{\gamma}} &\lesssim \left\|\int_{0}^{\cdot}S_{\cdot-r}\left(Y_{r}-\bar{Y}_{r}\right)d\eta_{r}\right\|_{E^{\beta}L_{m}\mathcal{H}_{\gamma}}+\left\|\int_{0}^{\cdot}S_{\cdot-r}\bar{Y}_{r}d\left(\eta_{r}-\bar{\eta}_{r}\right)\right\|_{E^{\beta}L_{m}\mathcal{H}_{\gamma}}\\
		&\lesssim T^{\alpha-\left(\beta \vee \nu\right)}\left\|Y-\bar{Y}\right\|_{E^{\beta}L_{m}\mathcal{H}_{\gamma-\nu}}\left|\delta \eta\right|_{\alpha}+T^{\alpha-\left(\beta \vee \nu\right)}\left\|\bar{Y}\right\|_{E^{\beta}L_{m}\mathcal{H}_{\gamma-\nu}}\left|\eta-\bar{\eta}\right|_{\alpha}\\
		&\lesssim T^{\alpha-\left(\beta \vee \nu\right)}\left\|u-\bar{u}\right\|_{E^{\beta}L_{m}\mathcal{H}_{\gamma}}+\left|\eta-\bar{\eta}\right|_{\alpha}. 
	\end{align*}
	Analogous to the proof of Theorem \ref{seu}, we have 
	\begin{equation*}
		\left\|S\left(\xi-\bar{\xi}\right)\right\|_{E^{\beta}L_{m}\mathcal{H}_{\gamma}} \lesssim \left\|\xi-\bar{\xi}\right\|_{m, \gamma},
	\end{equation*}
	\begin{equation*}
		\left\|F-\bar{F}\right\|_{E^{\beta}L_{m}\mathcal{H}_{\gamma}} \lesssim T^{1-\left(\beta \vee \lambda\right)}\left\|u-\bar{u}\right\|_{E^{\beta}L_{m}\mathcal{H}_{\gamma}}, \quad \left\|H-\bar{H}\right\|_{E^{\beta}L_{m}\mathcal{H}_{\gamma}} \lesssim T^{\frac{1}{2}-\left(\beta \vee \mu\right)}\left\|u-\bar{u}\right\|_{E^{\beta}L_{m}\mathcal{H}_{\gamma}}.
	\end{equation*}
	Then there exists $\sigma > 0$ depending only on $\beta, \lambda, \mu$ and $\nu$ such that 
	\begin{equation*}
		\left\|u-\bar{u}\right\|_{E^{\beta}L_{m}\mathcal{H}_{\gamma}} \lesssim T^{\sigma}\left\|u-\bar{u}\right\|_{E^{\beta}L_{m}\mathcal{H}_{\gamma}}+\left|\delta \eta-\delta \bar{\eta}\right|_{\alpha}+\left\|\xi-\bar{\xi}\right\|_{m, \gamma}. 
	\end{equation*}
	Hence, for $T$ sufficiently small we get the estimate \eqref{smc}. 
	The general result can be obtained by induction. 
\end{proof}
\subsection{Regularity of the mild solution}
Then we show that the mild solution of \eqref{yspde} has a better spatial regularity after some time. 
\begin{proposition} \label{sr}
	Let Assumption \ref{as} hold and $u \in E^{\beta}L_{m}\mathcal{H}_{\gamma}$ 
	be the mild solution of \eqref{yspde}. 
	Then $u \in \hat{C}^{\left(1-\lambda\right) \wedge \left(\frac{1}{2}-\mu\right) \wedge \left(\alpha-\nu\right)-\theta}L_{m}\mathcal{H}_{\gamma+\theta}\left[t, T\right]$ 
	for every $t \in \left(0, T\right]$ and $0 \leq \theta < \left(1-\lambda\right) \wedge \left(\frac{1}{2}-\mu\right) \wedge \left(\alpha-\nu\right)$ 
	and we have 
	\begin{equation} \label{sre}
		\left\|u_{t}\right\|_{m, \gamma+\theta} \lesssim t^{-\theta}\left\|\xi\right\|_{m, \gamma}+\left\|f\left(\cdot, 0\right)\right\|_{0, m, \gamma-\lambda}+\left\|h\left(\cdot, 0\right)\right\|_{0, m, \gamma-\mu}+\left\|g\right\|_{E^{\beta}L_{m}\mathcal{H}_{\gamma-\nu}}
	\end{equation}
	for a hidden prefactor depending only on $T, \left|\delta \eta\right|_{\alpha}$ and $\left\|G\right\|_{E^{\beta}L_{\infty}\mathcal{L}_{\gamma, \gamma-\nu}}$. 
\end{proposition}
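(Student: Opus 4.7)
The plan is to exploit the mild decomposition $u = S\xi + F + Z + H$ from the proof of Theorem \ref{seu} and to estimate each of the four summands separately at the elevated regularity level $\mathcal{H}_{\gamma+\theta}$, both pointwise in $\|\cdot\|_{m, \gamma+\theta}$ at time $t$ and in the $\hat\delta$-Hölder seminorm on $[t, T]$. Writing $\kappa := (1-\lambda) \wedge (\tfrac{1}{2} - \mu) \wedge (\alpha-\nu)$, the objective is to show that each summand belongs to $\hat{C}^{\kappa-\theta}L_m\mathcal{H}_{\gamma+\theta}[t, T]$ and that the sum of its $\|\cdot\|_{m, \gamma+\theta}$-norm at $t$ matches the right-hand side of \eqref{sre}.

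For the homogeneous part, Proposition \ref{asp}(i) gives directly $\|S_t\xi\|_{m, \gamma+\theta} \lesssim t^{-\theta}\|\xi\|_{m, \gamma}$, producing the singular prefactor in \eqref{sre}; moreover $\hat\delta(S\xi)_{s, t'} = S_{t'}\xi - S_{t'-s}S_s\xi = 0$, so the homogeneous part contributes nothing to the Hölder seminorm on $[t, T]$. For the drift $F$ and the stochastic integral $H$, I would reprise the Minkowski and BDG computations of the proof of Theorem \ref{seu}, now with the singular kernels $|t'-r|^{-(\lambda+\theta)}$ and $|t'-r|^{-2(\mu+\theta)}$, to obtain $\|\hat\delta F_{s, t'}\|_{m, \gamma+\theta} \lesssim |t'-s|^{1-\lambda-\theta}$ whenever $\theta < 1-\lambda$ and $\|\hat\delta H_{s, t'}\|_{m, \gamma+\theta} \lesssim |t'-s|^{1/2-\mu-\theta}$ whenever $\theta < 1/2 - \mu$, with implicit constants proportional to $\|u\|_{E^{\beta}L_{m}\mathcal{H}_{\gamma}}$ plus, respectively, $\|f(\cdot, 0)\|_{0, m, \gamma-\lambda}$ or $\|h(\cdot, 0)\|_{0, m, \gamma-\mu}$. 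Taking $s = 0$ in these bounds also supplies the pointwise estimates on $\|F_t\|_{m, \gamma+\theta}$ and $\|H_t\|_{m, \gamma+\theta}$.

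For the Young convolution $Z$, Proposition \ref{lc} first places the integrand $Gu + g$ in $E^{\beta}L_{m}\mathcal{H}_{\gamma-\nu}^{e}$, after which Proposition \ref{yci} applied with the shift parameter $\theta' := \theta + \nu \in [0, \alpha)$ (permissible thanks to $\theta < \alpha - \nu$) yields $Z \in \hat{C}^{\alpha-\nu-\theta}L_{m}\mathcal{H}_{\gamma+\theta}$ with Hölder seminorm controlled by $\|Gu + g\|_{E^{\beta}L_{m}\mathcal{H}_{\gamma-\nu}}\,|\delta\eta|_{\alpha}$, while $Z_0 = 0$ gives $\|Z_t\|_{m, \gamma+\theta} \lesssim t^{\alpha-\nu-\theta}$ times the same constant. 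Collecting the four summands, substituting the a priori bound on $\|u\|_{E^{\beta}L_{m}\mathcal{H}_{\gamma}}$ from Theorem \ref{seu}, and taking the minimum $(1-\lambda-\theta) \wedge (\tfrac{1}{2}-\mu-\theta) \wedge (\alpha-\nu-\theta) = \kappa-\theta$ delivers both \eqref{sre} and the claimed regularity; the continuity of $u$ on $[t, T]$ into $L^{m}(\Omega, \mathcal{H}_{\gamma+\theta})$ implicit in the target space follows from strong continuity of $S$ at the elevated level (applied to $S_s\xi \in \mathcal{H}_{\gamma+\theta}$ for $s \geq t > 0$), dominated convergence for $F$ and $H$, and the continuity of $Z$ already provided by Proposition \ref{yci}. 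The only genuine obstacle is bookkeeping: one must simultaneously satisfy the three exponent constraints $\lambda + \theta < 1$, $\mu + \theta < \tfrac{1}{2}$, and $\nu + \theta < \alpha$ (all implied by $\theta < \kappa$) and verify that the implicit constants depend only on $T$, $|\delta\eta|_{\alpha}$, and $\|G\|_{E^{\beta}L_{\infty}\mathcal{L}_{\gamma, \gamma-\nu}}$, not on the data appearing on the right-hand side of \eqref{sre}.
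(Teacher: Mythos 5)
Your proposal is correct and follows essentially the same route as the paper's own proof: the same decomposition $u=S\xi+F+Z+H$, the bound $\left\|S_{t}\xi\right\|_{m,\gamma+\theta}\lesssim t^{-\theta}\left\|\xi\right\|_{m,\gamma}$ from Proposition \ref{asp}, the Minkowski/BDG estimates for $F$ and $H$ at level $\gamma+\theta$, Propositions \ref{lc} and \ref{yci} (with shift $\theta+\nu<\alpha$) for $Z$, and finally substitution of the a priori bound \eqref{se} to remove the dependence on $\left\|u\right\|_{E^{\beta}L_{m}\mathcal{H}_{\gamma}}$. The pointwise estimate via $s=0$ in the $\hat{\delta}$-bounds (using $F_{0}=H_{0}=Z_{0}=0$) is exactly the paper's concluding step, so no gap remains.
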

\begin{proof}
	Recall the definition of $Y, Z, F$ and $H$ in the proof of Theorem \ref{seu}. 
	For every fixed $0 \leq \theta < \left(1-\lambda\right) \wedge \left(\frac{1}{2}-\mu\right) \wedge \left(\alpha-\nu\right)$, 
	by Propositions \ref{yci} and \ref{lc}, $Z \in \hat{C}^{\alpha-\nu-\theta}L_{m}\mathcal{H}_{\gamma+\theta}$ and we have 
	\begin{equation*}
		\left\|\hat{\delta}Z\right\|_{\alpha-\nu-\theta, m, \gamma+\theta} \lesssim \left\|Y\right\|_{E^{\beta}L_{m}\mathcal{H}_{\gamma-\nu}}\left|\delta \eta\right|_{\alpha} \lesssim \left(\left\|G\right\|_{E^{\beta}L_{\infty}\mathcal{L}_{\gamma, \gamma-\nu}}\left\|u\right\|_{E^{\beta}L_{m}\mathcal{H}_{\gamma}}+\left\|g\right\|_{E^{\beta}L_{m}\mathcal{H}_{\gamma-\nu}}\right)\left|\delta \eta\right|_{\alpha}. 
	\end{equation*}
	By Proposition \ref{asp}, we have 
	\begin{equation*}
		\left\|S_{t}\xi\right\|_{m, \gamma+\theta} \lesssim t^{-\theta}\left\|\xi\right\|_{m, \gamma}, \quad \forall t \in \left(0, T\right].
	\end{equation*}
	Combined with the strong continuity of $S$ on $\mathcal{H}_{\gamma+\theta}$ 
	and $\hat{\delta}S\xi=0$, we have 
	$S\xi \in \hat{C}^{1-\theta}L_{m}\mathcal{H}_{\gamma+\theta}\left[t, T\right]$. 
	By Proposition \ref{asp}, applying Minkowski's Inequality, 
	\begin{align*}
		\left\|\hat{\delta}F_{s, t}\right\|_{m, \gamma+\theta} &\lesssim \left(\mathbb{E}\left(\int_{s}^{t}\left|S_{t-r}f\left(r, u_{r}\right)\right|_{\gamma+\theta}dr\right)^{m}\right)^{\frac{1}{m}} \lesssim \int_{s}^{t}\left(t-s\right)^{-\left(\lambda+\theta\right)}\left\|f\left(r, u_{r}\right)\right\|_{m, \gamma-\lambda}dr\\
		&\lesssim \left(\left\|u\right\|_{E^{\beta}L_{m}\mathcal{H}_{\gamma}}+\left\|f\left(\cdot, 0\right)\right\|_{0, m, \gamma-\lambda}\right)\left|t-s\right|^{1-\lambda-\theta}, \quad \forall \left(s, t\right) \in \Delta_{2},
	\end{align*}
	which gives $F \in \hat{C}^{1-\lambda-\theta}L_{m}\mathcal{H}_{\gamma+\theta}$. 
	Similarly, by Proposition \ref{asp}, applying Minkowski's Inequality and BDG Inequality, 
	\begin{align*}
		\left\|\hat{\delta}H_{s, t}\right\|_{m, \gamma+\theta} &\lesssim \left(\mathbb{E}\left(\int_{s}^{t}\left|S_{t-r}h\left(r, u_{r}\right)\right|_{\gamma+\theta}^{2}dr\right)^{\frac{m}{2}}\right)^{\frac{1}{m}} \lesssim \left(\int_{s}^{t}\left(t-r\right)^{-2\left(\mu+\theta\right)}\left\|h\left(r, u_{r}\right)\right\|_{m, \gamma-\mu}^{2}dr\right)^{\frac{1}{2}}\\
		&\lesssim \left(\left\|u\right\|_{E^{\beta}L_{m}\mathcal{H}_{\gamma}}+\left\|h\left(\cdot, 0\right)\right\|_{0, m, \gamma-\mu}\right)\left|t-s\right|^{\frac{1}{2}-\mu-\theta}, \quad \forall \left(s, t\right) \in \Delta_{2},
	\end{align*}
	which gives $H \in \hat{C}^{\frac{1}{2}-\mu-\theta}L_{m}\mathcal{H}_{\gamma+\theta}$. 
	Therefore, 
	$u \in \hat{C}^{\left(1-\lambda\right) \wedge \left(\frac{1}{2}-\mu\right) \wedge \left(\alpha-\nu\right)-\theta}L_{m}\mathcal{H}_{\gamma+\theta}\left[t, T\right]$ 
	for every $t \in \left(0, T\right]$ and we have 
	\begin{align*}
		\left\|u_{t}\right\|_{m, \gamma+\theta} &\leq \left\|S_{t}\xi\right\|_{m, \gamma+\theta}+\left\|\hat{\delta}Z_{0, t}\right\|_{m, \gamma+\theta}+\left\|\hat{\delta}F_{0, t}\right\|_{m, \gamma+\theta}+\left\|\hat{\delta}H_{0, t}\right\|_{m, \gamma+\theta}\\
		&\lesssim t^{-\theta}\left\|\xi\right\|_{m, \gamma}+\left\|u\right\|_{E^{\beta}L_{m}\mathcal{H}_{\gamma}}+\left\|f\left(\cdot, 0\right)\right\|_{0, m, \gamma-\lambda}+\left\|h\left(\cdot, 0\right)\right\|_{0, m, \gamma-\mu}+\left\|g\right\|_{E^{\beta}L_{m}\mathcal{H}_{\gamma-\nu}}.
	\end{align*}
	Combined with \eqref{se}, we get the estimate \eqref{sre}. 
\end{proof}

\section{Continuous mild solutions in $L_{m}E^{\beta}\mathcal{H}_{\gamma}$}
\label{Sec5}
In this section, we will study continuous mild solutions 
to the Young SPDE \eqref{yspde} in $L_{m}E^{\beta}\mathcal{H}_{\gamma}$ 
for given $\beta \in \left(1-\alpha, \frac{1}{2}\right)$ and $m \in \left[2, \infty\right)$. 
\begin{definition} \label{msdp}
	We call $u \in L_{m}E^{\beta}\mathcal{H}_{\gamma}$ a continuous mild solution of \eqref{yspde} if 
	$Gu+g \in L_{m}E^{\beta}\mathcal{H}_{\gamma-\nu}^{e}$ and 
	for a.s. $\omega \in \Omega$, 
	\begin{equation*}
		u_{t}=S_{t}\xi+\int_{0}^{t}S_{t-r}f\left(r, u_{r}\right)dr+\int_{0}^{t}S_{t-r}\left(G_{r}u_{r}+g_{r}\right)d\eta_{r}+\int_{0}^{t}S_{t-r}h\left(r, u_{r}\right)dW_{r}
	\end{equation*}
	holds in $\mathcal{H}_{\gamma}$ for every $t \in \left[0, T\right]$. 
\end{definition}
Note that a continuous mild solution in $L_{m}E^{\beta}\mathcal{H}_{\gamma}$ 
is a mild solution in $E^{\beta}L_{m}\mathcal{H}_{\gamma}$. 
We introduce the following additional assumption. 
\begin{assumption} \label{as2}
	$G \in L_{\infty}E^{\beta}\mathcal{L}_{\gamma, \gamma-\nu}\left(\mathcal{H}, \mathcal{H}^{e}\right)$ and $g \in L_{m}E^{\beta}\mathcal{H}_{\gamma-\nu}^{e}$. 
\end{assumption}
\subsection{Existence and uniqueness}
Similar to Proposition \ref{lc}, we have the following result on compositions. 
\begin{proposition} \label{lcp}
	Let $u \in L_{m}E^{\beta}\mathcal{H}_{\gamma}$ 
	and $G \in L_{\infty}E^{\beta}\mathcal{L}_{\gamma, \gamma-\nu}\left(\mathcal{H}, \mathcal{H}^{e}\right)$. 
	Then $Gu \in L_{m}E^{\beta}\mathcal{H}_{\gamma-\nu}^{e}$ and we have 
	\begin{equation*}
		\left\|Gu\right\|_{L_{m}E^{\beta}\mathcal{H}_{\gamma-\nu}} \lesssim \left\|G\right\|_{L_{\infty}E^{\beta}\mathcal{L}_{\gamma, \gamma-\nu}}\left\|u\right\|_{L_{m}E^{\beta}\mathcal{H}_{\gamma}}.
	\end{equation*}
\end{proposition}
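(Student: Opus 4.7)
The plan is to mimic the proof of Proposition \ref{lc} pathwise, exploiting the fact that the $G$-factors sit in an $L^\infty$ space (so suprema of their Hölder/operator norms are bounded by $\|G\|_{L_{\infty}E^{\beta}\mathcal{L}_{\gamma,\gamma-\nu}}$ almost surely), and therefore factor out of any $L^{m}(\Omega)$ norm taken against the pathwise Hölder norms of $u$.

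First I would bound the sup-in-time part. For every $\omega$ and every $t \in [0,T]$ one has $|G_t u_t|_{\gamma-\nu} \leq |G_t|_{(\gamma,\gamma-\nu)\text{-}op}|u_t|_{\gamma}$, so taking the supremum over $t$ gives a pathwise bound by $\sup_t|G_t|_{(\gamma,\gamma-\nu)\text{-}op}\cdot\sup_t|u_t|_{\gamma}$. Taking the $L^{m}(\Omega)$-norm and pulling out the $L^{\infty}$-bound on $G$ yields
\[
\|Gu\|_{m,0,\gamma-\nu}\lesssim \|G\|_{L_{\infty}E^{\beta}\mathcal{L}_{\gamma,\gamma-\nu}}\|u\|_{m,0,\gamma}.
\]

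Next I would handle the Hölder-in-time part at the shifted regularity $\gamma-\beta$. For $(s,t)\in\Delta_2$, write the standard product increment
\[
\delta(Gu)_{s,t}=\delta G_{s,t}\,u_t+G_s\,\delta u_{s,t},
\]
and estimate pathwise in $\mathcal{H}_{\gamma-\nu-\beta}$ using the two operator norms available from the space $L_{\infty}E^{\beta}\mathcal{L}_{\gamma,\gamma-\nu}$:
\[
|\delta(Gu)_{s,t}|_{\gamma-\nu-\beta}\leq |\delta G_{s,t}|_{(\gamma-\beta,\gamma-\nu-\beta)\text{-}op}|u_t|_{\gamma-\beta}+|G_s|_{(\gamma-\beta,\gamma-\nu-\beta)\text{-}op}|\delta u_{s,t}|_{\gamma-\beta}.
\]
Dividing by $|t-s|^{\beta}$ and taking the supremum over $s<t$ gives, pathwise,
\[
|\delta(Gu)|_{\beta,\gamma-\nu-\beta}\lesssim \|G\|_{L_{\infty}E^{\beta}\mathcal{L}_{\gamma,\gamma-\nu}}(\omega)\bigl(|u|_{0,\gamma-\beta}+|\delta u|_{\beta,\gamma-\beta}\bigr).
\]
Since $\mathcal{H}_{\gamma}\hookrightarrow\mathcal{H}_{\gamma-\beta}$, the first factor dominates $|u|_{0,\gamma-\beta}$ by $|u|_{0,\gamma}$; then taking the $L^{m}(\Omega)$-norm and pulling out the $L^{\infty}$ bound on $G$ yields $\|\delta(Gu)\|_{m,\beta,\gamma-\nu-\beta}\lesssim \|G\|_{L_{\infty}E^{\beta}\mathcal{L}_{\gamma,\gamma-\nu}}\|u\|_{L_{m}E^{\beta}\mathcal{H}_{\gamma}}$.

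Finally I would verify the measurability/continuity required for membership in $L_{m}E^{\beta}\mathcal{H}_{\gamma-\nu}^{e}$: adaptedness of $Gu$ is inherited from that of $G$ and $u$, and continuity of $t\mapsto G_t u_t$ in $\mathcal{H}_{\gamma-\nu}^{e}$ follows pathwise from the two estimates above (which imply $G u$ is itself $\beta$-Hölder in time at regularity $\gamma-\nu-\beta$ and uniformly bounded at regularity $\gamma-\nu$). Adding the two pieces gives the claimed inequality. I do not expect a genuine obstacle here; the entire argument is a transcription of the proof of Proposition \ref{lc} to the pathwise setting, the only point requiring care being to separate the two operator norms of $G$ (at levels $\gamma$ and $\gamma-\beta$) and to use the $L^\infty(\Omega)$-nature of $\|G\|_{L_{\infty}E^{\beta}\mathcal{L}_{\gamma,\gamma-\nu}}$ so that one may factor it out of the $L^{m}(\Omega)$-norm before applying the definition of $\|u\|_{L_{m}E^{\beta}\mathcal{H}_{\gamma}}$.
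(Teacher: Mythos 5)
Your two estimates are exactly the intended argument: the paper gives no separate proof of Proposition~\ref{lcp} (it only remarks ``Similar to Proposition~\ref{lc}''), and your proposal is precisely that pathwise transcription of the proof of Proposition~\ref{lc} --- the decomposition $\delta\left(Gu\right)_{s,t}=\delta G_{s,t}u_{t}+G_{s}\delta u_{s,t}$, the use of the two operator norms of $G$ at the levels $\left(\gamma,\gamma-\nu\right)$ and $\left(\gamma-\beta,\gamma-\nu-\beta\right)$, and the factoring of the a.s.\ bound $\left\|G\right\|_{L_{\infty}E^{\beta}\mathcal{L}_{\gamma,\gamma-\nu}}$ out of the $L^{m}\left(\Omega\right)$-norm before invoking the definition of $\left\|u\right\|_{L_{m}E^{\beta}\mathcal{H}_{\gamma}}$. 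That part is correct.

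One point in your final paragraph is, however, not right as stated. You justify the pathwise continuity of $t\mapsto G_{t}u_{t}$ in $\mathcal{H}_{\gamma-\nu}^{e}$ by saying it follows from $Gu$ being $\beta$-H\"older at regularity $\gamma-\nu-\beta$ and uniformly bounded at regularity $\gamma-\nu$. That implication is false in general: a path can be H\"older continuous in a weaker norm and bounded in a stronger norm while failing to be continuous in the stronger norm (interpolation only gives continuity in the intermediate spaces $\mathcal{H}_{\gamma-\nu-\varepsilon}$ for $\varepsilon>0$, never at the endpoint). Since membership in $L_{m}E^{\beta}\mathcal{H}_{\gamma-\nu}^{e}$ requires $Gu\in L_{m}C\mathcal{H}_{\gamma-\nu}^{e}$, i.e.\ genuine pathwise continuity in the strong norm, this step needs an actual argument. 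The fix comes from the definitions rather than from your two estimates: $L_{\infty}E^{\beta}\mathcal{L}_{\gamma,\gamma-\nu}\left(\mathcal{H},\mathcal{H}^{e}\right)$ consists of \emph{continuous} processes with values in $\mathcal{L}\left(\mathcal{H}_{\gamma-\beta},\mathcal{H}_{\gamma-\nu-\beta}^{e}\right)\cap\mathcal{L}\left(\mathcal{H}_{\gamma},\mathcal{H}_{\gamma-\nu}^{e}\right)$, so pathwise $t\mapsto G_{t}$ is continuous in the $\left(\gamma,\gamma-\nu\right)\text{-}op$ norm, and $u\in L_{m}C\mathcal{H}_{\gamma}$ is pathwise continuous in $\mathcal{H}_{\gamma}$; then in the same decomposition one has $\left|\delta G_{s,t}u_{t}\right|_{\gamma-\nu}\leq\left|\delta G_{s,t}\right|_{\left(\gamma,\gamma-\nu\right)\text{-}op}\left|u_{t}\right|_{\gamma}\rightarrow 0$ and $\left|G_{s}\delta u_{s,t}\right|_{\gamma-\nu}\leq\sup_{r}\left|G_{r}\right|_{\left(\gamma,\gamma-\nu\right)\text{-}op}\left|\delta u_{s,t}\right|_{\gamma}\rightarrow 0$ as $\left|t-s\right|\rightarrow 0$. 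With that correction the proof is complete.
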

We now give the existence and uniqueness of the continuous mild solution of \eqref{yspde}. 
\begin{theorem} \label{seup}
	Let Assumptions \ref{as} and \ref{as2} hold and 
	$\frac{1}{m} < \left(1-\lambda\right) \wedge \left[\frac{1}{2}-\left(\beta \vee \nu\right)\right]$. 
	Then the Young SPDE \eqref{yspde} has a unique continuous mild solution 
	$u \in L_{m}E^{\beta}\mathcal{H}_{\gamma}$ and we have 
	\begin{equation} \label{sep}
		\left\|u\right\|_{L_{m}E^{\beta}\mathcal{H}_{\gamma}} \lesssim \left\|\xi\right\|_{m, \gamma}+\left\|f\left(\cdot, 0\right)\right\|_{0, m, \gamma-\lambda}+\left\|h\left(\cdot, 0\right)\right\|_{0, m, \gamma-\mu}+\left\|g\right\|_{L_{m}E^{\beta}\mathcal{H}_{\gamma-\nu}},
	\end{equation}
	for a hidden prefactor depending only on $T, \left|\delta \eta\right|_{\alpha}$ and $\left\|G\right\|_{L_{\infty}E^{\beta}\mathcal{L}_{\gamma, \gamma-\nu}}$. 
\end{theorem}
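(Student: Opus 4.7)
The plan is to replicate the fixed-point argument of Theorem \ref{seu}, this time in the pathwise space $L_{m}E^{\beta}\mathcal{H}_{\gamma}$. Given $u \in L_{m}E^{\beta}\mathcal{H}_{\gamma}$, I would define the same candidate map
\[
\Phi(u) := S\xi + F(u) + Z(u) + H(u),
\]
with $F, Z, H$ being the drift, Young, and Itô convolution integrals as before, and show that $\Phi$ maps $L_{m}E^{\beta}\mathcal{H}_{\gamma}$ into itself and is a strict contraction for $T$ small. The global result would then be obtained by the same concatenation of solutions along a partition $0=t_{0}<\cdots<t_{N}=T$ that concludes the proof of Theorem \ref{seu}, since the small time $\varepsilon$ will again not depend on $\xi$.

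For each constituent of $\Phi(u)$ my strategy is as follows. The semigroup term $S\xi$ sits in $L_{m}E^{\beta}\mathcal{H}_{\gamma}$ by the same strong-continuity argument used to derive \eqref{Sxie}, combined with $\hat{\delta}(S\xi)=0$ and Proposition \ref{nep}. For the Young piece, Proposition \ref{lcp} together with Assumption \ref{as2} yields $Gu+g \in L_{m}E^{\beta}\mathcal{H}_{\gamma-\nu}^{e}$, and then Proposition \ref{ycip} applied with the exponent trick $\theta=\nu$ (legitimate because $\nu<\alpha$) places $Z(u)$ in $L_{m}E^{\beta}\mathcal{H}_{\gamma}$ directly, without invoking Kolmogorov at all. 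This pathwise control of $Z$ is the main new ingredient relative to Theorem \ref{seu} and is exactly what motivates working in $L_{m}E^{\beta}\mathcal{H}_{\gamma}$.

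For $F$ and $H$, on the other hand, the calculations from the proof of Theorem \ref{seu} deliver only $E^{\beta}L_{m}$-type estimates: specifically $\hat{\delta}F\in \hat{C}^{1-(\lambda-\beta)^{+}}L_{m}\mathcal{H}_{\gamma-\beta}\cap \hat{C}^{1-\lambda}L_{m}\mathcal{H}_{\gamma}$, see \eqref{Fe}, and $\hat{\delta}H\in \hat{C}^{1/2-(\mu-\beta)^{+}}L_{m}\mathcal{H}_{\gamma-\beta}\cap \hat{C}^{1/2-\mu}L_{m}\mathcal{H}_{\gamma}$ via BDG, see \eqref{He}. To lift these into $L_{m}E^{\beta}\mathcal{H}_{\gamma}$ I would apply the Kolmogorov Criterion (Proposition \ref{kc}) separately to the $\mathcal{H}_{\gamma-\beta}$-Hölder part and the $\mathcal{H}_{\gamma}$-continuity part, then recombine via Proposition \ref{nep}. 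The quantitative hypothesis $\frac{1}{m}<(1-\lambda)\wedge[\frac{1}{2}-(\beta\vee\nu)]$ enters precisely here, to absorb the $1/m+\varepsilon$ loss inherent in Kolmogorov and still retain pathwise $\beta$-Hölder regularity in $\mathcal{H}_{\gamma-\beta}$ along with pathwise continuity in $\mathcal{H}_{\gamma}$.

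Once these ingredients are in place, the contraction estimate for $\Phi(u)-\Phi(\bar{u})$ is obtained by running the same chain of inequalities with the Lipschitz constants of $f,h$ and the linearity of the Young integrand in place of the integrand norms, producing the same short-time factor $T^{\sigma}$ as in \eqref{phie}; the Banach fixed-point theorem then gives existence and uniqueness on $[0,\varepsilon]$, the estimate \eqref{sep} on this interval, and induction over the partition delivers the full statement. The main obstacle I anticipate is the bookkeeping of the two different Hölder/space exponents entering each Kolmogorov upgrade, in particular verifying that the stated hypothesis on $m$ is enough to simultaneously control the $F$, $H$ contributions in both the $\mathcal{H}_{\gamma-\beta}$-Hölder and $\mathcal{H}_{\gamma}$-continuity components; all the remaining steps are direct adaptations of the arguments already given in Section \ref{Sec4}.
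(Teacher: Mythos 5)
Your proposal is correct and follows essentially the same route as the paper's own proof: the Young term is handled pathwise via Propositions \ref{lcp} and \ref{ycip} with no Kolmogorov argument, while $F$ and $H$ are lifted from the moment bounds \eqref{Fe}--\eqref{He} to pathwise regularity via Proposition \ref{kc} and recombined through Proposition \ref{nep}, followed by the same contraction and concatenation scheme. The bookkeeping concern you flag is genuine but is inherited from the paper rather than a defect of your attempt: the Kolmogorov upgrade of $H$ actually requires $\frac{1}{m}<\frac{1}{2}-\left(\beta\vee\mu\right)$, which the stated hypothesis $\frac{1}{m}<\left(1-\lambda\right)\wedge\left[\frac{1}{2}-\left(\beta\vee\nu\right)\right]$ does not supply when $\mu>\beta\vee\nu$ (and is vacuous when $\nu=\frac{1}{2}$); comparison with Section \ref{Sec6}, where $\lambda=\nu=\frac{1}{2}$, $\mu=0$ and the condition imposed is $\beta+\frac{1}{m}<\frac{1}{2}$, indicates that the $\nu$ in the hypothesis is a typo for $\mu$, under which reading your argument closes exactly as the paper's does.
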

\begin{proof}
	We only show the existence and uniqueness for $T$ sufficiently small. 
	The general result and estimate \eqref{sep} can be obtained analogous to the proof of Theorem \ref{seu}. 
	For any $u \in L_{m}E^{\beta}\mathcal{H}_{\gamma}$, by Proposition \ref{lcp}, 
	$Y:=Gu+g \in L_{m}E^{\beta}\mathcal{H}_{\gamma-\nu}^{e}$. 
	Then by Proposition \ref{ycip}, 
	$Z:=\int_{0}^{\cdot}S_{\cdot-r}Y_{r}d\eta_{r} \in L_{m}E^{\beta}\mathcal{H}_{\gamma}$ and 
	\begin{align*}
		\left\|Z\right\|_{L_{m}E^{\beta}\mathcal{H}_{\gamma}} &\lesssim T^{\alpha-\left(\beta \vee \nu\right)}\left\|Y\right\|_{L_{m}E^{\beta}\mathcal{H}_{\gamma-\nu}}\left|\delta \eta\right|_{\alpha}\\
		&\lesssim T^{\alpha-\left(\beta \vee \nu\right)}\left(\left\|G\right\|_{L_{\infty}E^{\beta}\mathcal{L}_{\gamma, \gamma-\nu}}\left\|u\right\|_{L_{m}E^{\beta}\mathcal{H}_{\gamma}}+\left\|g\right\|_{L_{m}E^{\beta}\mathcal{H}_{\gamma-\nu}}\right)\left|\delta \eta\right|_{\alpha}. 
	\end{align*} 
	Define 
	\begin{equation*}
		F:=\int_{0}^{\cdot}S_{\cdot-r}f\left(r, u_{r}\right)dr, \quad H:=\int_{0}^{\cdot}S_{\cdot-r}h\left(r, u_{r}\right)dW_{r}, \quad \Phi\left(u\right):=S\xi+F+Z+H.
	\end{equation*}
	From the proof of Theorem \ref{seu}, we have $S\xi \in L_{m}E^{\beta}\mathcal{H}_{\gamma}$, 
	$F \in \hat{C}^{1-\left(\lambda-\beta\right)^{+}}L_{m}\mathcal{H}_{\gamma-\beta} \cap \hat{C}^{1-\lambda}L_{m}\mathcal{H}_{\gamma}$ 
	and $H \in \hat{C}^{\frac{1}{2}-\left(\mu-\beta\right)^{+}}L_{m}\mathcal{H}_{\gamma-\beta} \cap \hat{C}^{\frac{1}{2}-\mu}L_{m}\mathcal{H}_{\gamma}$. 
	By Proposition \ref{kc}, $F, H \in L_{m}\hat{C}^{\beta}\mathcal{H}_{\gamma-\beta} \cap L_{m}C\mathcal{H}_{\gamma}$ 
	and there exists $\varepsilon > 0$ depending only on $\beta, \lambda, \mu$ and $m$ such that 
	\begin{equation*}
		\left\|\hat{\delta}F\right\|_{m, \beta, \gamma-\beta} \lesssim T^{\varepsilon}\left\|\hat{\delta}F\right\|_{1-\left(\lambda-\beta\right)^{+}, m, \gamma-\beta}, \quad \left\|F\right\|_{m, 0, \gamma} \lesssim T^{\varepsilon}\left\|\hat{\delta}F\right\|_{1-\lambda, m, \gamma},
	\end{equation*}
	\begin{equation*}
		\left\|\hat{\delta}H\right\|_{m, \beta, \gamma-\beta} \lesssim T^{\varepsilon}\left\|\hat{\delta}H\right\|_{\frac{1}{2}-\left(\mu-\beta\right)^{+}, m, \gamma-\beta}, \quad  \left\|H\right\|_{m, 0, \gamma} \lesssim T^{\varepsilon}\left\|\hat{\delta}H\right\|_{\frac{1}{2}-\mu, m, \gamma}.
	\end{equation*}
	In view of \eqref{Fe} and \eqref{He}, by Proposition \ref{nep}, 
	$F, H \in L_{m}E^{\beta}\mathcal{H}_{\gamma}$ and 
	\begin{equation*}
		\left\|\hat{\delta}F\right\|_{L_{m}E^{\beta}\mathcal{H}_{\gamma}} \lesssim \left\|\hat{\delta}F\right\|_{m, \beta, \gamma-\beta}+\left\|F\right\|_{m, 0, \gamma} \lesssim T^{\varepsilon}\left(\left\|u\right\|_{L_{m}E^{\beta}\mathcal{H}_{\gamma}}+\left\|f\left(\cdot, 0\right)\right\|_{0, m, \gamma-\lambda}\right),
	\end{equation*}
	\begin{equation*}
		\left\|\hat{\delta}H\right\|_{L_{m}E^{\beta}\mathcal{H}_{\gamma}} \lesssim \left\|\hat{\delta}H\right\|_{m, \beta, \gamma-\beta}+\left\|H\right\|_{m, 0, \gamma} \lesssim T^{\varepsilon}\left(\left\|u\right\|_{L_{m}E^{\beta}\mathcal{H}_{\gamma}}+\left\|h\left(\cdot, 0\right)\right\|_{0, m, \gamma-\mu}\right).
	\end{equation*}
	Therefore, $\Phi\left(u\right) \in L_{m}E^{\beta}\mathcal{H}_{\gamma}$ and 
	\begin{align*}
		\left\|\Phi\left(u\right)\right\|_{L_{m}E^{\beta}\mathcal{H}_{\gamma}} &\lesssim T^{\left[\alpha-\left(\beta \vee \nu\right)\right] \wedge \varepsilon}\left(1+\left\|G\right\|_{L_{\infty}E^{\beta}\mathcal{L}_{\gamma, \gamma-\nu}}\left|\delta \eta\right|_{\alpha}\right)\left\|u\right\|_{L_{m}E^{\beta}\mathcal{H}_{\gamma}}\\
		&\quad+\left\|\xi\right\|_{m, \gamma}+\left\|f\left(\cdot, 0\right)\right\|_{0, m, \gamma-\lambda}+\left\|h\left(\cdot, 0\right)\right\|_{0, m, \gamma-\mu}+\left\|g\right\|_{L_{m}E^{\beta}\mathcal{H}_{\gamma-\nu}}\left|\delta \eta\right|_{\alpha}. 
	\end{align*}
	For any other $\bar{u} \in L_{m}E^{\beta}\mathcal{H}_{\gamma}$, we can similarly get 
	\begin{equation*}
		\left\|\Phi\left(u\right)-\Phi\left(\bar{u}\right)\right\|_{L_{m}E^{\beta}\mathcal{H}_{\gamma}} \lesssim T^{\left[\alpha-\left(\beta \vee \nu\right)\right] \wedge \varepsilon}\left(1+\left\|G\right\|_{L_{\infty}E^{\beta}\mathcal{L}_{\gamma, \gamma-\nu}}\left|\delta \eta\right|_{\alpha}\right)\left\|u-\bar{u}\right\|_{L_{m}E^{\beta}\mathcal{H}_{\gamma}}. 
	\end{equation*}
	Hence, $\Phi$ is a contraction map in $L_{m}E^{\beta}\mathcal{H}_{\gamma}$ for $T$ sufficiently small. 
	Applying the Banach Fixed-point Theorem, $\Phi$ has a unique fixed point $u$ in $L_{m}E^{\beta}\mathcal{H}_{\gamma}$, 
	which is the unique continuous mild solution of \eqref{yspde}.
\end{proof}
\subsection{Continuity of the continuous mild solution map}
Similar to Theorem \ref{csm}, we also have the continuity of the continuous mild solution map. 
\begin{theorem} \label{csmp}
	Let Assumptions \ref{as} and \ref{as2} hold, 
	$\frac{1}{m} < \left(1-\lambda\right) \wedge \left[\frac{1}{2}-\left(\beta \vee \nu\right)\right]$ 
	and additionally 
	$\bar{\eta} \in C^{\alpha}\left(\left[0, T\right], \mathbb{R}^{e}\right)$, 
	$\bar{\xi} \in L^{m}\left(\Omega, \mathcal{F}_{0}, \mathcal{H}_{\gamma}\right)$. 
	Assume $\left|\delta \eta\right|_{\alpha}, \left|\delta \bar{\eta}\right|_{\alpha}, \left\|\xi\right\|_{m, \gamma}, \left\|\bar{\xi}\right\|_{m, \gamma} \leq R$ for some $R \geq 0$. 
	Let $u \in L_{m}E^{\beta}\mathcal{H}_{\gamma}$ be the continuous mild solution of \eqref{yspde} and 
	$\bar{u} \in L_{m}E^{\beta}\mathcal{H}_{\gamma}$ be the continuous mild solution of \eqref{yspde} with $\eta$ and $\xi$ 
	replaced by $\bar{\eta}$ and $\bar{\xi}$, respectively. 
	Then we have 
	\begin{equation*}
		\left\|u-\bar{u}\right\|_{L_{m}E^{\beta}\mathcal{H}_{\gamma}} \lesssim \left|\delta \eta-\delta \bar{\eta}\right|_{\alpha}+\left\|\xi-\bar{\xi}\right\|_{m, \gamma}, 
	\end{equation*}
	for a hidden prefactor depending only on $T, R, \left\|f\left(\cdot, 0\right)\right\|_{0, m, \gamma-\lambda}, \left\|h\left(\cdot, 0\right)\right\|_{0, m, \gamma-\mu}, \left\|G\right\|_{L_{\infty}E^{\beta}\mathcal{L}_{\gamma, \gamma-\nu}}$ and $\left\|g\right\|_{L_{m}E^{\beta}\mathcal{H}_{\gamma-\nu}}$. 
	As a consequence, the continuous mild solution map 
	\begin{equation*}
		C^{\alpha}\left(\left[0, T\right], \mathbb{R}^{e}\right) \times L^{m}\left(\Omega, \mathcal{F}_{0}, \mathcal{H}_{\gamma}\right) \rightarrow L_{m}E^{\beta}\mathcal{H}_{\gamma}: \quad \left(\eta, \xi\right) \mapsto u
	\end{equation*}
	is locally Lipschitz continuous. 
\end{theorem}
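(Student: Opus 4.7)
The plan is to mirror the proof of Theorem \ref{csm} but with the pathwise tools developed in Section \ref{Sec5}. First, from Theorem \ref{seup} I obtain $M \geq 0$, depending only on the data listed in the statement, with $\|u\|_{L_{m}E^{\beta}\mathcal{H}_{\gamma}}, \|\bar u\|_{L_{m}E^{\beta}\mathcal{H}_{\gamma}} \leq M$. Writing the mild forms and subtracting, decompose
\begin{equation*}
u-\bar u = S(\xi-\bar\xi) + (F-\bar F) + (Z-\bar Z) + (H-\bar H),
\end{equation*}
with $F,H,Z,\bar F,\bar H,\bar Z$ defined exactly as in the proof of Theorem \ref{seup} (with $u,\eta$ replaced by $\bar u,\bar\eta$ in the barred versions). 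For the initial piece, the argument already used in Theorem \ref{seup} gives $\|S(\xi-\bar\xi)\|_{L_{m}E^{\beta}\mathcal{H}_{\gamma}} \lesssim \|\xi-\bar\xi\|_{m,\gamma}$.

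For the Young convolution difference, I split
\begin{equation*}
Z-\bar Z = \int_{0}^{\cdot} S_{\cdot-r}(Y_r-\bar Y_r)\,d\eta_r + \int_{0}^{\cdot} S_{\cdot-r}\bar Y_r\,d(\eta_r-\bar\eta_r),
\end{equation*}
where $Y=Gu+g$ and $\bar Y = G\bar u+g$. Proposition \ref{lcp} gives $\|Y-\bar Y\|_{L_{m}E^{\beta}\mathcal{H}_{\gamma-\nu}} \lesssim \|G\|_{L_{\infty}E^{\beta}\mathcal{L}_{\gamma,\gamma-\nu}}\|u-\bar u\|_{L_{m}E^{\beta}\mathcal{H}_{\gamma}}$, while $\|\bar Y\|_{L_{m}E^{\beta}\mathcal{H}_{\gamma-\nu}} \lesssim \|G\|_{L_{\infty}E^{\beta}\mathcal{L}_{\gamma,\gamma-\nu}}M + \|g\|_{L_{m}E^{\beta}\mathcal{H}_{\gamma-\nu}}$. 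Then Proposition \ref{ycip} (with $\theta=\nu$) bounds the two terms by $T^{\alpha-(\beta\vee\nu)}\|u-\bar u\|_{L_{m}E^{\beta}\mathcal{H}_{\gamma}}|\delta\eta|_\alpha$ and (a constant times) $|\delta\eta-\delta\bar\eta|_\alpha$ respectively, up to the hidden prefactor that the theorem allows.

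For $F-\bar F$ and $H-\bar H$, replay the argument from the proof of Theorem \ref{seup}: the Lipschitz assumptions on $f,h$ together with Proposition \ref{asp}, Minkowski's and the BDG inequalities give
\begin{equation*}
\|\hat\delta(F-\bar F)\|_{1-(\lambda-\beta)^{+},m,\gamma-\beta} + \|\hat\delta(F-\bar F)\|_{1-\lambda,m,\gamma} \lesssim \|u-\bar u\|_{L_{m}E^{\beta}\mathcal{H}_{\gamma}},
\end{equation*}
and the analogous inequality for $H-\bar H$ with exponents $\tfrac12-(\mu-\beta)^{+}$ and $\tfrac12-\mu$. Under the hypothesis $\tfrac1m < (1-\lambda)\wedge[\tfrac12-(\beta\vee\nu)]$ (which in particular exceeds $\tfrac{1}{m}+\beta$ in the relevant regimes), the Kolmogorov criterion (Proposition \ref{kc}) upgrades these $\hat C^{\bullet}L_m$ bounds to $L_m\hat C^{\beta}$ and $L_mC$ bounds on $F-\bar F$ and $H-\bar H$, losing a positive power $T^{\varepsilon}$ in the process. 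Combining with Proposition \ref{nep} yields
\begin{equation*}
\|F-\bar F\|_{L_{m}E^{\beta}\mathcal{H}_{\gamma}} + \|H-\bar H\|_{L_{m}E^{\beta}\mathcal{H}_{\gamma}} \lesssim T^{\varepsilon}\,\|u-\bar u\|_{L_{m}E^{\beta}\mathcal{H}_{\gamma}}.
\end{equation*}

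Collecting everything, for some $\sigma>0$ depending only on $\alpha,\beta,\lambda,\mu,\nu,m$,
\begin{equation*}
\|u-\bar u\|_{L_{m}E^{\beta}\mathcal{H}_{\gamma}} \lesssim T^{\sigma}\|u-\bar u\|_{L_{m}E^{\beta}\mathcal{H}_{\gamma}} + |\delta\eta-\delta\bar\eta|_\alpha + \|\xi-\bar\xi\|_{m,\gamma},
\end{equation*}
so for $T$ small enough the $u-\bar u$ term on the right can be absorbed, giving the estimate. For arbitrary $T$, partition $[0,T]$ as in the proof of Theorem \ref{seu}, apply the small-time estimate on each subinterval $[t_{i-1},t_i]$, and chain the resulting inequalities, using that on $[t_i,t_{i+1}]$ the "initial data" $u_{t_i}-\bar u_{t_i}$ is controlled by $\|u-\bar u\|_{L_{m}E^{\beta}\mathcal{H}_{\gamma}[0,t_i]}$. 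Local Lipschitz continuity of the solution map is an immediate consequence. The main obstacle is the third paragraph: the Young convolution estimate must be applied in the pathwise norm $L_m E^{\beta}$, which is exactly what Proposition \ref{ycip} provides, and the drift/noise pieces must be upgraded from the $E^{\beta}L_m$ estimates already in hand via Proposition \ref{kc}, so that the quantitative $\tfrac1m$ condition in the hypotheses is sharp enough to buy the positive power $T^{\varepsilon}$ needed for the contraction/absorption step.
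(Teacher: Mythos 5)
Your proposal is correct and matches the paper's intent: the paper gives no separate proof of this theorem, stating only that it is ``similar to Theorem \ref{csm}'', and your argument is precisely that adaptation --- the same decomposition $u-\bar u = S(\xi-\bar\xi)+(F-\bar F)+(Z-\bar Z)+(H-\bar H)$, with Proposition \ref{lcp} and \ref{ycip} replacing \ref{lc} and \ref{yci}, the Kolmogorov criterion (Proposition \ref{kc}) plus Proposition \ref{nep} upgrading the drift and noise terms exactly as in the proof of Theorem \ref{seup}, and absorption for small $T$ followed by induction.
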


\section{An example}
\label{Sec6}
Let $\mathbb{T}^{n}$ be the $n$-dimensional torus, 
$\mathcal{H}:=L^{2}\left(\mathbb{T}^{n}, \mathbb{R}^{l}\right)$ and 
$L:=\Delta$ be the Laplace operator on $\mathbb{T}^{n}$. 
Define $H^{\gamma}\left(\mathbb{T}^{n}, \mathbb{R}^{l}\right)$ as the 
$L^{2}$-based fractional order Sobolev space. 
Then $L$ is a negative definite self-adjoint operator on $\mathcal{H}$ with 
domain $\mathcal{D}\left(L\right):=H^{2}\left(\mathbb{T}^{n}, \mathbb{R}^{l}\right)$,  
which generates the heat semigroup $\left(S_{t}\right)_{t \geq 0}:=\left(e^{t\Delta}\right)_{t \geq 0}$. 
Furthermore, $\mathcal{H}_{\gamma}=H^{2\gamma}\left(\mathbb{T}^{n}, \mathbb{R}^{l}\right)$ 
for every $\gamma \in \mathbb{R}$.\\
\indent
Consider the following concrete Young SPDE
\begin{equation} \label{yspdec}
	\left\{
		\begin{aligned}
			&du_{t}\left(x\right)=\left[\Delta u_{t}\left(x\right)+f\left(t, x, u_{t}\left(x\right), \nabla u_{t}\left(x\right)\right)\right]dt+\left[G_{t}^{1}\left(x\right)\nabla u_{t}\left(x\right)+G_{t}^{0}\left(x\right)u_{t}\left(x\right)+g_{t}\left(x\right)\right]d\eta_{t}\\
			&\quad \quad \quad \quad+h\left(t, x, u_{t}\left(x\right)\right)dW_{t}, \quad \left(t, x\right) \in \left(0, T\right] \times \mathbb{T}^{n},\\
			&u_{0}\left(x\right)=\xi\left(x\right), \quad x \in \mathbb{T}^{n}.
		\end{aligned}
	\right.	
\end{equation}
Here, $\eta \in C^{\alpha}\left(\left[0, T\right], \mathbb{R}^{e}\right)$, 
$\xi: \Omega \times \mathbb{T}^{n} \rightarrow \mathbb{R}^{l}$ is the initial datum, 
$f: \left[0, T\right] \times \Omega \times \mathbb{T}^{n} \times \mathbb{R}^{l} \times \mathbb{R}^{l \times n} \rightarrow \mathbb{R}^{l}$, 
$G^{1}: \left[0, T\right] \times \Omega \times \mathbb{T}^{n} \rightarrow \mathcal{L}\left(\mathbb{R}^{l \times n}, \mathbb{R}^{l \times e}\right)$, 
$G^{0}: \left[0, T\right] \times \Omega \times \mathbb{T}^{n} \rightarrow \mathcal{L}\left(\mathbb{R}^{l}, \mathbb{R}^{l \times e}\right)$, 
$g: \left[0, T\right] \times \Omega \times \mathbb{T}^{n} \rightarrow \mathbb{R}^{l \times e}$ 
and $h: \left[0, T\right] \times \Omega \times \mathbb{T}^{n} \times \mathbb{R}^{l} \rightarrow \mathbb{R}^{l \times d}$ 
are progressively measurable vector fields. 
Given $\beta \in \left(1-\alpha, \frac{1}{2}\right)$ and $m \in \left[2, \infty\right)$, 
we introduce the following definition of mild solutions and continuous mild solutions 
as Definitions \ref{msd} and \ref{msdp} for $\gamma=0$.  
\begin{definition} \label{msdc}
	We call $u \in C^{\beta}L_{m}H^{-2\beta}\left(\mathbb{T}^{n}, \mathbb{R}^{l}\right) \cap CL_{m}L^{2}\left(\mathbb{T}^{n}, \mathbb{R}^{l}\right)$ a mild solution of \eqref{yspdec} if 
	$G^{1}\nabla u+G^{0}u+g \in C^{\beta}L_{m}H^{-1-2\beta}\left(\mathbb{T}^{n}, \mathbb{R}^{l \times e}\right) \cap CL_{m}H^{-1}\left(\mathbb{T}^{n}, \mathbb{R}^{l \times e}\right)$ and 
	for every $t \in \left[0, T\right]$ and a.s. $\omega \in \Omega$, 
	\begin{equation*}
		u_{t}=S_{t}\xi+\int_{0}^{t}S_{t-r}f\left(r, u_{r}, \nabla u_{r}\right)dr+\int_{0}^{t}S_{t-r}\left(G_{r}^{1}\nabla u_{r}+G_{r}^{0}u_{r}+g_{r}\right)d\eta_{r}+\int_{0}^{t}S_{t-r}h\left(r, u_{r}\right)dW_{r}
	\end{equation*}
	holds in $L^{2}\left(\mathbb{T}^{n}, \mathbb{R}^{l}\right)$. 
\end{definition}
\begin{definition} \label{msdpc}
	We call $u \in L_{m}C^{\beta}H^{-2\beta}\left(\mathbb{T}^{n}, \mathbb{R}^{l}\right) \cap L_{m}CL^{2}\left(\mathbb{T}^{n}, \mathbb{R}^{l}\right)$ a continuous mild solution of \eqref{yspdec} if 
	$G^{1}\nabla u+G^{0}u+g \in L_{m}C^{\beta}H^{-1-2\beta}\left(\mathbb{T}^{n}, \mathbb{R}^{l \times e}\right) \cap L_{m}CH^{-1}\left(\mathbb{T}^{n}, \mathbb{R}^{l \times e}\right)$ and 
	for a.s. $\omega \in \Omega$, 
	\begin{equation*}
		u_{t}=S_{t}\xi+\int_{0}^{t}S_{t-r}f\left(r, u_{r}, \nabla u_{r}\right)dr+\int_{0}^{t}S_{t-r}\left(G_{r}^{1}\nabla u_{r}+G_{r}^{0}u_{r}+g_{r}\right)d\eta_{r}+\int_{0}^{t}S_{t-r}h\left(r, u_{r}\right)dW_{r}
	\end{equation*}
	holds in $L^{2}\left(\mathbb{T}^{n}, \mathbb{R}^{l}\right)$ for every $t \in \left[0, T\right]$. 
\end{definition}
Then we introduce the following assumptions. 
\begin{assumption} \label{asc}
	~
	\begin{enumerate}[(i)]
		\item $\xi \in L^{m}\left(\Omega, \mathcal{F}_{0}, L^{2}\left(\mathbb{T}^{n}, \mathbb{R}^{l}\right)\right)$; 
		\item $t \mapsto f\left(t, \cdot, 0, 0\right)$ is bounded from $\left[0, T\right]$ to $L^{m}\left(\Omega, L^{2}\left(\mathbb{T}^{n}, \mathbb{R}^{l}\right)\right)$ 
		and for every $\left(t, x\right) \in \left[0, T\right] \times \mathbb{T}^{n}$, $u, \bar{u} \in \mathbb{R}^{l}$ and $v, \bar{v} \in \mathbb{R}^{l \times n}$, 
		\begin{equation*}
			\left|f\left(t, x, u, v\right)-f\left(t, x, \bar{u}, \bar{v}\right)\right| \lesssim \left|u-\bar{u}\right|+\left|v-\bar{v}\right|;
		\end{equation*}
		\item $G^{1} \in C^{\beta}L_{m}L^{\infty}\left(\mathbb{T}^{n}, \mathcal{L}\left(\mathbb{R}^{l \times n}, \mathbb{R}^{l \times e}\right)\right)$ 
		and $G^{0} \in C^{\beta}L_{m}L^{\infty}\left(\mathbb{T}^{n}, \mathcal{L}\left(\mathbb{R}^{l}, \mathbb{R}^{l \times e}\right)\right)$;
		\item $g \in C^{\beta}L_{m}H^{-1-2\beta}\left(\mathbb{T}^{n}, \mathbb{R}^{l \times e}\right) \cap CL_{m}H^{-1}\left(\mathbb{T}^{n}, \mathbb{R}^{l \times e}\right)$;
		\item $t \mapsto h\left(t, \cdot, 0\right)$ is bounded from $\left[0, T\right]$ to $L^{m}\left(\Omega, L^{2}\left(\mathbb{T}^{n}, \mathbb{R}^{l \times d}\right)\right)$ and 
		\begin{equation*}
			\left|h\left(t, x, u\right)-h\left(t, x, \bar{u}\right)\right| \lesssim \left|u-\bar{u}\right|, \quad \forall \left(t, x\right) \in \left[0, T\right] \times \mathbb{T}^{n}, \quad \forall u, \bar{u} \in \mathbb{R}^{l}. 
		\end{equation*}
	\end{enumerate}
\end{assumption}
\begin{assumption} \label{as2c}
	$G^{1} \in L_{m}C^{\beta}L^{\infty}\left(\mathbb{T}^{n}, \mathcal{L}\left(\mathbb{R}^{l \times n}, \mathbb{R}^{l \times e}\right)\right)$, 
	$G^{0} \in L_{m}C^{\beta}L^{\infty}\left(\mathbb{T}^{n}, \mathcal{L}\left(\mathbb{R}^{l}, \mathbb{R}^{l \times e}\right)\right)$ 
	and $g \in L_{m}C^{\beta}H^{-1-2\beta}\left(\mathbb{T}^{n}, \mathbb{R}^{l \times e}\right) \cap L_{m}CH^{-1}\left(\mathbb{T}^{n}, \mathbb{R}^{l \times e}\right)$.
\end{assumption}
Clearly, Assumption \ref{asc} implies Assumption \ref{as} 
for $\gamma=0, \lambda=\nu=\frac{1}{2}$ and $\mu=0$. 
By Theorems \ref{seu} and \ref{csm} and Proposition \ref{sr}, we have 
the following result. 
\begin{theorem} \label{ms}
	Under Assumption \ref{asc}, the concrete Young SPDE \eqref{yspdec} has a unique mild solution 
	$u \in C^{\beta}L_{m}H^{-2\beta}\left(\mathbb{T}^{n}, \mathbb{R}^{l}\right) \cap CL_{m}L^{2}\left(\mathbb{T}^{n}, \mathbb{R}^{l}\right)$ 
	and the mild solution map 
	\begin{equation*}
		C^{\alpha}\left(\left[0, T\right], \mathbb{R}^{e}\right) \times L^{m}\left(\Omega, \mathcal{F}_{0}, L^{2}\left(\mathbb{T}^{n}, \mathbb{R}^{l}\right)\right) \rightarrow C^{\beta}L_{m}H^{-2\beta}\left(\mathbb{T}^{n}, \mathbb{R}^{l}\right) \cap CL_{m}L^{2}\left(\mathbb{T}^{n}, \mathbb{R}^{l}\right): \quad \left(\eta, \xi\right) \mapsto u
	\end{equation*}
	is locally Lipschitz continuous. 
	Furthermore, $u_{t} \in L^{m}\left(\Omega, H^{2\theta}\left(\mathbb{T}^{n}, \mathbb{R}^{l}\right)\right)$ 
	for every $t \in \left(0, T\right]$ and $\theta \in \left[0, \alpha-\frac{1}{2}\right)$. 
\end{theorem}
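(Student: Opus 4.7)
The plan is to obtain Theorem \ref{ms} as a direct consequence of Theorems \ref{seu}, \ref{csm}, and Proposition \ref{sr}. To do so, I would set $\mathcal{H} = L^{2}(\mathbb{T}^{n}, \mathbb{R}^{l})$ with $L = \Delta$ and $S_{t} = e^{t\Delta}$, so that $\mathcal{H}_{\gamma} = H^{2\gamma}(\mathbb{T}^{n}, \mathbb{R}^{l})$, and fix the abstract parameters as $\gamma = 0$, $\lambda = \nu = \frac{1}{2}$, $\mu = 0$. With these choices, $E^{\beta}L_{m}\mathcal{H}_{0}$ coincides with $C^{\beta}L_{m}H^{-2\beta}(\mathbb{T}^{n}, \mathbb{R}^{l}) \cap CL_{m}L^{2}(\mathbb{T}^{n}, \mathbb{R}^{l})$ and $E^{\beta}L_{m}\mathcal{H}_{-1/2}^{e}$ coincides with $C^{\beta}L_{m}H^{-1-2\beta, e} \cap CL_{m}H^{-1, e}$, so that the concrete notion of mild solution in Definition \ref{msdc} agrees with the abstract one in Definition \ref{msd}.

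The substance of the proof lies in verifying that Assumption \ref{asc} implies Assumption \ref{as}. The natural identifications are $f_{\mathrm{abs}}(t, u)(x) := f(t, x, u(x), \nabla u(x))$, $G_{t} := G_{t}^{1}\nabla + G_{t}^{0}$ viewed as an operator on the spatial Sobolev scale, and $h_{\mathrm{abs}}(t, u)(x) := h(t, x, u(x))$, while $\xi$ and $g$ carry over verbatim. I would then check in turn: (a) $f_{\mathrm{abs}} : L^{2} \to H^{-1}$ is Lipschitz with finite $\|f_{\mathrm{abs}}(\cdot, 0)\|_{0, m, -1/2}$, by pairing against $\phi \in H^{1}$ and absorbing the gradient contribution via $|\nabla(u - \bar{u})|_{H^{-1}} \lesssim |u - \bar{u}|_{L^{2}}$; (b) $G \in E^{\beta}L_{\infty}\mathcal{L}_{0, -1/2}(\mathcal{H}, \mathcal{H}^{e})$, by combining the $L^{\infty}$-valued $C^{\beta}$ assumption on $G^{1}, G^{0}$ with the boundedness of $\nabla$ between consecutive Sobolev spaces and the multiplier action of $L^{\infty}$ on the relevant Sobolev scale; (c) $h_{\mathrm{abs}} : L^{2} \to L^{2, d}$ is Lipschitz with finite $\|h_{\mathrm{abs}}(\cdot, 0)\|_{0, m, 0}$, directly from the pointwise bound; (d) Assumption \ref{asc}(i) and (iv) transcribe item-for-item into Assumption \ref{as}(i) and the $g$-part of (iii).

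With Assumption \ref{as} in hand, Theorem \ref{seu} supplies the unique mild solution $u \in E^{\beta}L_{m}\mathcal{H}_{0}$ together with the quantitative bound, Theorem \ref{csm} supplies the local Lipschitz continuity of $(\eta, \xi) \mapsto u$, and Proposition \ref{sr} supplies the spatial regularity. For the latter, the effective exponent reads
\begin{equation*}
	(1 - \lambda) \wedge \left(\frac{1}{2} - \mu\right) \wedge (\alpha - \nu) = \frac{1}{2} \wedge \frac{1}{2} \wedge \left(\alpha - \frac{1}{2}\right) = \alpha - \frac{1}{2}
\end{equation*}
since $\alpha \in (\frac{1}{2}, 1)$, and hence $u_{t} \in L^{m}(\Omega, H^{2\theta}(\mathbb{T}^{n}, \mathbb{R}^{l}))$ for every $t \in (0, T]$ and $\theta \in [0, \alpha - \frac{1}{2})$, exactly as claimed.

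I expect the main obstacle to be point (b): making sense of $G_{t}^{1} \nabla u$ as an element of $H^{-1-2\beta, e}$ when $u$ is only in $H^{-2\beta}$ and $G_{t}^{1}$ is merely $L^{\infty}$-valued. This forces a duality argument against $H^{1+2\beta}$-test functions, shifting the derivative off $u$ at the cost of needing the multiplier property of $L^{\infty}$ on the appropriate Sobolev range, and the same estimate must then be carried out uniformly for the increment $\delta G_{s, t}^{1}$ to obtain the $\beta$-Hölder time control in the operator norm. Once this operator-theoretic point and its counterpart for $G^{0}$ are settled, the remaining verifications are routine and the invocation of the abstract machinery proceeds by inspection.
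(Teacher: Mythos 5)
Your overall strategy is exactly the paper's: the paper's entire proof of Theorem \ref{ms} consists of the assertion that Assumption \ref{asc} implies Assumption \ref{as} for $\gamma=0$, $\lambda=\nu=\frac{1}{2}$, $\mu=0$, followed by citations of Theorems \ref{seu}, \ref{csm} and Proposition \ref{sr}. Your identification of the spaces ($E^{\beta}L_{m}\mathcal{H}_{0}=C^{\beta}L_{m}H^{-2\beta}\cap CL_{m}L^{2}$, $\mathcal{H}_{-1/2}^{e}=H^{-1,e}$), your matching of Definitions \ref{msdc} and \ref{msd}, and your computation $(1-\lambda)\wedge(\frac{1}{2}-\mu)\wedge(\alpha-\nu)=\alpha-\frac{1}{2}$ are all correct and are precisely what the paper leaves implicit.

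However, the two verifications you label (a) and (b) — which the paper disposes of with the word ``Clearly'' — are not closed by the arguments you sketch, and the route you propose for (b) would fail. There is no ``multiplier action of $L^{\infty}$ on the relevant Sobolev range'': multiplication by a general $b\in L^{\infty}(\mathbb{T}^{n})$ is bounded on $H^{s}(\mathbb{T}^{n})$ only for $s=0$. To bound $G^{1}_{t}\nabla u$ in $H^{-1}$ by $\|u\|_{L^{2}}$, your duality pairing against $\phi\in H^{1}$ requires either that $(G^{1}_{t})^{T}\phi\in H^{1}$ (i.e.\ that $G^{1}_{t}$ is an $H^{1}$-multiplier, which fails for mere $L^{\infty}$, indeed even for continuous coefficients: taking $n=l=e=1$ and the lacunary series $b=\sum_{k}2^{-k/2}k^{-1}\cos(2^{k}x)$, which is continuous, one checks $\|b\,\partial_{x}e^{inx}\|_{H^{-1}}\gtrsim 2^{k/2}/k\to\infty$ along $n=2^{k}$ while $\|e^{inx}\|_{L^{2}}$ is constant) or that the derivative be integrated by parts onto $G^{1}_{t}$, which is not assumed to be weakly differentiable. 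The same obstruction undermines (a): when $f$ is genuinely nonlinear in the gradient variable, $f(t,x,u(x),\nabla u(x))$ is not even defined for $u\in L^{2}$ (one cannot compose a nonlinearity with the distribution $\nabla u\in H^{-1}$), and the trick of ``absorbing the gradient contribution via $|\nabla(u-\bar{u})|_{H^{-1}}\lesssim|u-\bar{u}|_{L^{2}}$'' only makes sense when $f$ is affine in $v$. There is additionally an integrability mismatch you did not flag: Assumption \ref{asc}(iii) controls $G^{1},G^{0}$ only in $L^{m}(\Omega)$, whereas Assumption \ref{as}(iii) requires membership in $E^{\beta}L_{\infty}\mathcal{L}_{\gamma,\gamma-\nu}$, i.e.\ bounds essentially uniform in $\omega$. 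So your instinct that (b) is the crux is right, but the gap is real and is not closable by a cleverer duality argument; it would require strengthening Assumption \ref{asc} (spatial regularity of $G^{1},G^{0}$ beyond $L^{\infty}$, boundedness in $\omega$, and $f$ affine in $\nabla u$). To be fair to you, the paper's own one-sentence proof conceals exactly the same difficulty.
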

Similarly, Assumption \ref{as2c} implies Assumption \ref{as2} 
for $\gamma=0, \lambda=\nu=\frac{1}{2}$ and $\mu=0$. 
By Theorems \ref{seup} and \ref{csmp}, we have the following futher result. 
\begin{theorem}
	Let Assumptions \ref{asc} and \ref{as2c} hold and $\beta+\frac{1}{m} < \frac{1}{2}$.  
	Then the concrete Young SPDE \eqref{yspdec} has a unique continuous mild solution 
	$u \in L_{m}C^{\beta}H^{-2\beta}\left(\mathbb{T}^{n}, \mathbb{R}^{l}\right) \cap L_{m}CL^{2}\left(\mathbb{T}^{n}, \mathbb{R}^{l}\right)$ 
	and the continuous mild solution map 
	\begin{equation*}
		C^{\alpha}\left(\left[0, T\right], \mathbb{R}^{e}\right) \times L^{m}\left(\Omega, \mathcal{F}_{0}, L^{2}\left(\mathbb{T}^{n}, \mathbb{R}^{l}\right)\right) \rightarrow L_{m}C^{\beta}H^{-2\beta}\left(\mathbb{T}^{n}, \mathbb{R}^{l}\right) \cap L_{m}CL^{2}\left(\mathbb{T}^{n}, \mathbb{R}^{l}\right): \quad \left(\eta, \xi\right) \mapsto u
	\end{equation*}
	is locally Lipschitz continuous. 
\end{theorem}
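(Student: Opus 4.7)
The plan is to derive this theorem as a direct corollary of Theorems \ref{seup} and \ref{csmp}, paralleling the derivation of Theorem \ref{ms} from the corresponding abstract results. Under the identification $\gamma = 0$, $\lambda = \nu = \tfrac{1}{2}$, $\mu = 0$ already used before Theorem \ref{ms}, one has $\mathcal{H}_\gamma = H^{2\gamma}(\mathbb{T}^n, \mathbb{R}^l)$, so the spaces in Definition \ref{msdpc} match those in Definition \ref{msdp}: $L_m E^\beta \mathcal{H}_0 = L_m C^\beta H^{-2\beta} \cap L_m C L^2$ and $L_m E^\beta \mathcal{H}_{-1/2}^e = L_m C^\beta H^{-1-2\beta} \cap L_m C H^{-1}$. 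Thus \eqref{yspdec} becomes an instance of \eqref{yspde} with aggregated multiplier $G_t u := G_t^1 \nabla u + G_t^0 u$.

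First I would check that Assumptions \ref{asc} and \ref{as2c} imply Assumptions \ref{as} and \ref{as2} for these parameters. The items on $\xi$, $f$, $g$ and $h$ translate by the standard Nemytskii and Sobolev-embedding arguments on $\mathbb{T}^n$ already used to derive Theorem \ref{ms}. The genuinely new content is that \ref{as2c} upgrades $G^1, G^0$ to pathwise $L^\infty(\Omega)$-Hölder estimates, from which I must assemble the norm of $G$ in $L_\infty E^\beta \mathcal{L}_{0, -1/2}(\mathcal{H}, \mathcal{H}^e)$. This combines (a) the boundedness of $\nabla : H^{2\gamma} \to H^{2\gamma - 1}$ (which accounts for $\nu = \tfrac{1}{2}$), with (b) a multiplier estimate of the form $\|G^i v\|_{H^s} \lesssim \|G^i\|_{L^\infty} \|v\|_{H^s}$ at the indices $s \in \{0, -2\beta\}$ needed for the $E^\beta$-norm, together with the analogous bound for $\delta G^i_{s,t}$ with a factor $|t-s|^\beta$.

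Second, the scaling hypothesis $\beta + \tfrac{1}{m} < \tfrac{1}{2}$ is exactly what is required to apply the Kolmogorov criterion (Proposition \ref{kc}) to the deterministic and stochastic convolutions $F$ and $H$ appearing inside the proof of Theorem \ref{seup}: the BDG inequality yields an $L^m$-Hölder exponent $\tfrac{1}{2}-\mu = \tfrac{1}{2}$ for $H$ at the $\mathcal{H}_{\gamma-\beta}$-level, and Kolmogorov converts this into pathwise $\beta$-Hölder regularity precisely when $\beta < \tfrac{1}{2} - \tfrac{1}{m}$; the deterministic term $F$ is comfortably covered by the same gap since $1 - \lambda = \tfrac{1}{2}$ as well. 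Theorem \ref{seup} then produces the unique continuous mild solution and its norm bound, and Theorem \ref{csmp} supplies the local Lipschitz continuity of the map $(\eta, \xi) \mapsto u$. The main obstacle is the multiplier argument described above: since $G^1, G^0$ are only spatially $L^\infty$, one must keep the Sobolev indices inside the range where $L^\infty$-multiplication acts boundedly (using $\beta < \tfrac{1}{2}$ so that $|{-2\beta}| < 1$ and duality is available) and verify the Hölder-in-time control of $\delta G_{s,t}$ in $\mathcal{L}(H^{-2\beta}, H^{-1-2\beta})$ uniformly in $\omega$; the remaining steps are direct transcriptions of the abstract theorems.
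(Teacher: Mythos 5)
Your overall route is exactly the paper's: the paper obtains this theorem as an immediate corollary of Theorems \ref{seup} and \ref{csmp}, after asserting (without proof) that Assumptions \ref{asc} and \ref{as2c} imply Assumptions \ref{as} and \ref{as2} with $\gamma=0$, $\lambda=\nu=\tfrac12$, $\mu=0$. Your treatment of the integrability condition is also the intended one: taken literally, the hypothesis of Theorem \ref{seup} reads $\tfrac1m<(1-\lambda)\wedge\left[\tfrac12-(\beta\vee\nu)\right]$, which is vacuous here since $\nu=\tfrac12$ forces $\tfrac12-(\beta\vee\nu)=0$; what the proof of Theorem \ref{seup} actually uses, via Proposition \ref{kc}, are the exponents $1-(\lambda-\beta)^{+}$ of $F$ and $\tfrac12-(\mu-\beta)^{+}$ of $H$, so the condition should involve $\mu$ rather than $\nu$, and with $\lambda=\tfrac12$, $\mu=0$ it reduces precisely to $\beta+\tfrac1m<\tfrac12$, as you say.

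The genuine gap is in the step you yourself single out as the main obstacle. The estimate $\|G^{i}v\|_{H^{s}}\lesssim\|G^{i}\|_{L^{\infty}}\|v\|_{H^{s}}$ is false for every $s\neq0$: multiplication by an $L^{\infty}$ function is bounded on $H^{s}(\mathbb{T}^{n})$ only for $s=0$, and duality does not create a range $|s|<1$ of good indices --- it only says boundedness on $H^{-s}$ is equivalent to boundedness (of multiplication by the transposed matrix) on $H^{s}$, and both fail in general. Concretely, with $g_{N}(x):=\cos(Nx_{1})$ and $u_{N}:=g_{N}$ one has $\|g_{N}\|_{L^{\infty}}=1$ and $\|u_{N}\|_{H^{-s}}\asymp N^{-s}$, while $g_{N}u_{N}=\tfrac12+\tfrac12\cos(2Nx_{1})$ has $H^{-s}$-norm bounded below by a constant, so the operator norm of multiplication by $g_{N}$ on $H^{-s}$ grows like $N^{s}$. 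Moreover, the indices you list are not the ones required: for the first-order term the multiplication by $G^{1}_{t}$ and by $\delta G^{1}_{s,t}$ must act \emph{after} $\nabla$, i.e.\ on $H^{-1}$ and on $H^{-1-2\beta}$, and for $u\in H^{-2\beta}$ the product of $G^{1}_{t}\in L^{\infty}$ with $\nabla u\in H^{-1-2\beta}$ is not even canonically defined (the regularities sum to a negative number). Of all the operator bounds demanded by Assumptions \ref{as}(iii) and \ref{as2}, the only one that comes for free is $\|G^{0}_{t}u\|_{H^{-1}}\leq\|G^{0}_{t}u\|_{L^{2}}\leq\|G^{0}_{t}\|_{L^{\infty}}\|u\|_{L^{2}}$. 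In fairness, the paper hides the same difficulty behind ``Clearly''/``Similarly'', and the implication is genuinely problematic for coefficients that are merely $L^{\infty}$ in space; a correct verification needs additional spatial regularity of $G^{1},G^{0}$ (enough to make them pointwise multipliers of $H^{-1-2\beta}$, e.g.\ spatial H\"older regularity of order exceeding $1+2\beta$) or a reformulation of the $G$-term, and your duality argument cannot close this. A similar caveat applies to your appeal to ``standard Nemytskii arguments'' for $f$: since $u$ only lies in $L^{2}$, the composition $f(t,x,u,\nabla u)$ and its Lipschitz bound into $H^{-1}$ make sense essentially only when the dependence on $\nabla u$ is affine.
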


\bibliographystyle{amsplain}
\bibsection
\begin{biblist}
\bib{ALT22-1}{article}{
	author={Addona, D.},
	author={Lorenzi, L.},
	author={Tessitore, G.},
	title={Regularity results for nonlinear Young equations and applications},
	journal={J. Evol. Equ.},
	volume={22},
	date={2022},
	number={1},
	pages={Paper No. 3, 34},
	issn={1424-3199},
	review={\MR{4386002}},
	doi={10.1007/s00028-022-00757-y},
}
\bib{ALT22-2}{article}{
	author={Addona, D.},
	author={Lorenzi, L.},
	author={Tessitore, G.},
	title={Young equations with singularities},
	date={2022},
	eprint={arXiv:2212.14346},
}
\bib{DGT12}{article}{
   author={Deya, A.},
   author={Gubinelli, M.},
   author={Tindel, S.},
   title={Non-linear rough heat equations},
   journal={Probab. Theory Related Fields},
   volume={153},
   date={2012},
   number={1-2},
   pages={97--147},
   issn={0178-8051},
   review={\MR{2925571}},
   doi={10.1007/s00440-011-0341-z},
}
\bib{FHL21}{article}{
	author={Friz, P. K.},
	author={Hocquet, A.},
	author={L\^{e}, K.},
	title={Rough stochastic differential equations},
	date={2021},
	eprint={arXiv:2106.10340},
}
\bib{GH19}{article}{
   author={Gerasimovi\v{c}s, A.},
   author={Hairer, M.},
   title={H\"{o}rmander's theorem for semilinear SPDEs},
   journal={Electron. J. Probab.},
   volume={24},
   date={2019},
   pages={Paper No. 132, 56},
   review={\MR{4040992}},
   doi={10.1214/19-ejp387},
}
\bib{GHN21}{article}{
   author={Gerasimovi\v{c}s, A.},
   author={Hocquet, A.},
   author={Nilssen, T.},
   title={Non-autonomous rough semilinear PDEs and the multiplicative sewing
   lemma},
   journal={J. Funct. Anal.},
   volume={281},
   date={2021},
   number={10},
   pages={Paper No. 109200, 65},
   issn={0022-1236},
   review={\MR{4299812}},
   doi={10.1016/j.jfa.2021.109200},
}
\bib{G04}{article}{
	author={Gubinelli, M.},
	title={Controlling rough paths},
	journal={J. Funct. Anal.},
	volume={216},
	date={2004},
	number={1},
	pages={86--140},
	issn={0022-1236},
	review={\MR{2091358}},
	doi={10.1016/j.jfa.2004.01.002},
}
\bib{GLT06}{article}{
   author={Gubinelli, M.},
   author={Lejay, A.},
   author={Tindel, S.},
   title={Young integrals and SPDEs},
   journal={Potential Anal.},
   volume={25},
   date={2006},
   number={4},
   pages={307--326},
   issn={0926-2601},
   review={\MR{2255351}},
   doi={10.1007/s11118-006-9013-5},
}
\bib{GT10}{article}{
   author={Gubinelli, M.},
   author={Tindel, S.},
   title={Rough evolution equations},
   journal={Ann. Probab.},
   volume={38},
   date={2010},
   number={1},
   pages={1--75},
   issn={0091-1798},
   review={\MR{2599193}},
   doi={10.1214/08-AOP437},
}
\bib{H09}{article}{
	author={Hairer, M.},
	title={An Introduction to Stochastic PDEs},
	date={2009},
	eprint={arXiv:0907.4178},
}
\bib{HN19}{article}{
   author={Hesse, R.},
   author={Neam\c{t}u, A.},
   title={Local mild solutions for rough stochastic partial differential
   equations},
   journal={J. Differential Equations},
   volume={267},
   date={2019},
   number={11},
   pages={6480--6538},
   issn={0022-0396},
   review={\MR{4001062}},
   doi={10.1016/j.jde.2019.06.026},
}
\bib{HN20}{article}{
   author={Hesse, R.},
   author={Neam\c{t}u, A.},
   title={Global solutions and random dynamical systems for rough evolution
   equations},
   journal={Discrete Contin. Dyn. Syst. Ser. B},
   volume={25},
   date={2020},
   number={7},
   pages={2723--2748},
   issn={1531-3492},
   review={\MR{4097587}},
   doi={10.3934/dcdsb.2020029},
}
\bib{HeN22}{article}{
   author={Hesse, R.},
   author={Neam\c{t}u, A.},
   title={Global solutions for semilinear rough partial differential
   equations},
   journal={Stoch. Dyn.},
   volume={22},
   date={2022},
   number={2},
   pages={Paper No. 2240011, 18},
   issn={0219-4937},
   review={\MR{4431448}},
   doi={10.1142/S0219493722400111},
}
\bib{HoN22}{article}{
	author={Hocquet, A.},
	author={Neam\c{t}u, A.},
	title={Quasilinear rough evolution equations},
	date={2022},
	eprint={arXiv:2207.04787},
}
\bib{LT23}{article}{
	author={Liang, J.},
	author={Tang, S.},
	title={Multidimensional Backward Stochastic Differential Equations with Rough Drifts},
	date={2023},
	eprint={arXiv:2301.12434},
}
\bib{L94}{article}{
	author={Lyons, T. J.},
	title={Differential equations driven by rough signals. I. An extension of
	an inequality of L. C. Young},
	journal={Math. Res. Lett.},
	volume={1},
	date={1994},
	number={4},
	pages={451--464},
	issn={1073-2780},
	review={\MR{1302388}},
	doi={10.4310/MRL.1994.v1.n4.a5},
}
\bib{L98}{article}{
	author={Lyons, T. J.},
	title={Differential equations driven by rough signals},
	journal={Rev. Mat. Iberoamericana},
	volume={14},
	date={1998},
	number={2},
	pages={215--310},
	issn={0213-2230},
	review={\MR{1654527}},
	doi={10.4171/RMI/240},
}
\bib{P83}{book}{
   author={Pazy, A.},
   title={Semigroups of linear operators and applications to partial
   differential equations},
   series={Applied Mathematical Sciences},
   volume={44},
   publisher={Springer-Verlag, New York},
   date={1983},
   pages={viii+279},
   isbn={0-387-90845-5},
   review={\MR{710486}},
   doi={10.1007/978-1-4612-5561-1},
}
\bib{Y36}{article}{
	author={Young, L. C.},
	title={An inequality of Hölder type, connected with Stieljes integration},
	journal={Acta Math.},
	volume={67},
	date={1936},
	pages={251--282},
}
\end{biblist}
\end{document}